\newcommand{\R}{\mathbb{R}}
\newcommand{\Z}{\mathbb{Z}}
\newcommand{\map}{\Phi}
\newcommand{\ellE}{\mathbf{E}}
\newcommand{\ellK}{\mathbf{K}}
\newcommand{\ellF}{\mathbf{F}}
\newcommand{\ellPi}{\mathbf{\Pi}}
\newcommand{\ellL}{\mathbf{\Lambda}}
\renewcommand {\leq}{\leqslant}
\renewcommand {\le}{\leqslant}
\renewcommand {\ge}{\geqslant}
\newcommand{\myf}{\lambda}
\newcommand{\wt}[1]{\widetilde{#1}}
\newcommand{\ol}[1]{\overline{#1}}
\newcommand{\eps}{\epsilon}
\DeclareMathOperator{\Sec}{Sec}
\DeclareMathOperator{\vol}{vol}
\DeclareMathOperator{\dist}{dist}
\theoremstyle{plain}
\newtheorem{theorem}[equation]{Theorem}    
\newtheorem*{theorem*}{Theorem}
\newtheorem{lemma}[equation]{Lemma}       
\newtheorem{proposition}[equation]{Proposition}      
\newtheorem{proposition*}{Proposition}
\newtheorem{corollary}[equation]{Corollary}      
\theoremstyle{definition}
\newtheorem{definition}[equation]{Definition}      
\theoremstyle{remark}
\newtheorem*{remark*}{Remark}  
\newtheorem{remark}[equation]{Remark}   
\newtheorem*{ack}{Acknowledgements}  
\numberwithin{equation}{section}
\title{Solid angles and Seifert hypersurfaces}
\author{Maciej Borodzik}
\address{Institute of Mathematics, University of Warsaw, ul. Banacha 2, 02-097, Warsaw, POLAND}
\address{Institute of Mathematics, Polish Academy of Science, ul \'Sniadeckich 8, Warsaw, POLAND}
\email{mcboro@mimuw.edu.pl}
\author{Supreedee Dangskul}
\address{Center of Excellence in Mathematics and Applied
Mathematics, Department of Mathematics, Faculty of Science, Chiang Mai University, Chiang Mai 50200, Thailand}
\email{supreedee.dangskul@cmu.ac.th}
\author{Andrew Ranicki}
\address{School of Mathematics, University of Edinburgh,
James Clerk Maxwell Building, Peter Guthrie Tait Road, Edinburgh EH9 3FD,
Scotland, UK}
\email{a.ranicki@ed.ac.uk}
\date{\today}
\begin{document}

\begin{abstract}
Given a smooth closed oriented manifold $M$ of dimension $n$ embedded in $\R^{n+2}$ we study properties
of the `solid angle' function $\map\colon\R^{n+2}\setminus M\to S^1$. It turns out that a non-critical level
set of $\map$ is an explicit Seifert hypersurface for $M$.
\end{abstract}
\maketitle

\section{Introduction}\label{sec:intro}

It has been known since Seifert \cite{Seifert} that every link $L=\coprod S^1\subset\R^3$ possesses a Seifert surface, that is, a compact
oriented surface $\Sigma\subset\R^3$ such that $\partial\Sigma=L$.
Seifert gave an explicit algorithm for finding
a Seifert surface from a link diagram.

In 1969 Erle \cite{Erle} 
proved that any codimension two embedding $M^n\subset\R^{n+2}$ of a closed oriented connected manifold~$M$ has a trivial normal bundle
 and admits a Seifert hypersurface $\Sigma^{n+1} \subset \R^{n+2}$ with
 $\partial \Sigma= M \subset \R^{n+2}$; see also \cite{BNR2}. The proof of the existence of the latter fact is not constructive, it relies on the Pontryagin--Thom construction applied to any smooth map $f:{\rm cl.}(\R^{n+2} \backslash M\times D^2) \to S^1$ representing the generator
 $$1 \in [\ol{\R^{n+2}\backslash M\times D^2},S^1]=H^1(\R^{n+2}\backslash M)=H_n(M)=\Z$$
 with $\Sigma=f^{-1}(t)$ for a regular value $t \in S^1$.
To the best of our knowledge, there is no known algorithm for constructing Seifert hypersurfaces in higher dimensions.

In this paper we present a construction of a Seifert hypersurface $\Sigma \subset \R^{n+2}$ as $\Sigma=\Phi^{-1}(t)$ for a 
concrete  smooth map $\Phi:\R^{n+2}\backslash M \to \R/\Z=S^1$ obtained geometrically.
The construction is based  on three-dimensional physical intuition.
Namely, suppose $M\subset\R^3$ is a loop with constant
electric current. The scalar magnetic potential $\wt{\map}$ of $M$ at a point $x\notin M$ is the solid angle subtended by $M$, that is, the signed area
of a spherical surface bounded by the image of $M$ under the radial projection, as seen from $x$; see
\cite[Chapter III]{Maxwell} or \cite[Section 8.3]{Haus}. As the complement $\R^3\setminus M$ is not simply connected, the potential $\wt{\map}$
is defined only modulo a constant, which we normalize to be $1$. The potential induces a well-defined function $\map\colon\R^3\setminus M\to\R/\Z$.
This physical interpretation suggests that there exists an open neighborhood $N$ of $M$ such that $\map|_{N\setminus M}$ is a locally trivial
fibration. In particular, a level set $\map^{-1}(t)$ should be a (possibly disconnected) Seifert surface for $M$. In \cite[Chapter VII]{Dee}
the second author proved that this is indeed the case, although the proof is rather involved. In fact, even for a circle the exact
formula for $\map$ is complicated; it was given by Maxwell in \cite[Chapter XIV]{Maxwell} in terms of power series and also by Paxton
in \cite{Paxton}. These formulae for $\map$ for the circle show that the analytic behavior of $\map$ near $M$ is quite intricate,
although we can show that $\map$ is a locally trivial fibration in $U\setminus M$ for some small neighborhood $U$ of $M$; see Section~\ref{sec:linearbehavior}.

The construction can be generalized to higher dimensions, even though the physical interpretation seems to be a little less clear. For
any closed oriented submanifold $M^n\subset\R^{n+2}$, by the result of Erle \cite{Erle} there exists a Seifert hypersurface.
For any such hypersurface
$\Sigma$ and a point $x\notin\Sigma$ we define $\wt{\map}(x)$ to be the high-dimensional solid angle of $M$, that is, the signed area
of the image of the radial projection of $\Sigma$ to the $(n+1)$-sphere of radius $1$ and center $x$. The value of $\wt{\map}(x)$ depends on the choice
of the hypersurface $\Sigma$, but it turns out that under a suitable normalization, $\map(x):=\wt{\map}(x)\bmod 1$ is independent of the choice
of the Seifert hypersurface. Moreover, there is a formula for $\map(x)$ in terms of integrals of some concrete differential forms over $M$,
so the existence of $\Sigma$ is needed only to show that $\map$ is well-defined.

As long as $t\neq 0\in\R/\Z$, the preimage $\map^{-1}(t)$ is a bounded hypersurface in $\R^{n+2}\setminus M$. If, additionally, $t$
is a non-critical value, $\map^{-1}(t)$ is smooth. To prove that $\map^{-1}(t)$ is actually a Seifert hypersurface for $M$, we need to study
the local behavior of $\map$ near $M$. It turns out that the closure of $\map^{-1}(t)$ is smooth up to boundary.
We obtain the following result, which we can state as follows.

\begin{theorem}\label{thm:main}
Let $M\subset\R^{n+2}$ be a smooth codimension 2 embedding. Let $\map\colon\R^{n+2}\setminus M\to\R/\Z$
be the solid angle map (or the scalar magnetic potential map).
\begin{itemize}
\item On the set of points $\{x\in\R^{n+2}\setminus M\colon (0,0,\ldots,0,1)\not\in\Sec_x(M)\}$, the map $\map$ is given by
\begin{multline*}
\map(x)=\int_M\frac{1}{\|y-x\|^{n+1}} \myf\left(\frac{x_{n+2}-y_{n+2}}{\|x-y\|}\right)\cdot\\
\sum_{i=1}^{n+1} (-1)^{i+1} (y_i-x_i) dy_1\wedge\ldots\widehat{dy_i}\ldots\wedge dy_{n+1},
\end{multline*}
where $\Sec_x$ is the secant map $\Sec_x(y)=\dfrac{y-x}{\|y-x\|}$ and $\myf$ is an explicit function depending on the dimension
$n$ described in \eqref{eq:pullbackofeta}.
\item Let $t\neq 0$ be a non-critical value of $\map$. Then $\map^{-1}(t)$ is a smooth (open) hypersurface whose closure is
$\map^{-1}(t)\cup M$. The closure
of $\map^{-1}(t)$ is a possibly disconnected Seifert hypersurface for $M$, which is a topological submanifold of $\R^{n+2}$, smooth up to boundary.
\item For $t\neq 0$, the preimage $\map^{-1}(t)$ has finite $(n+1)$--dimensional volume.
\end{itemize}
\end{theorem}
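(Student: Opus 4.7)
The overall strategy exploits the interpretation of $\wt\map(x)$ as the signed area of the radial projection $\Sec_x(\Sigma)\subset S^{n+1}$ for a Seifert hypersurface $\Sigma$ of $M$. Normalizing the total volume of $S^{n+1}$ to $1$, this signed area is the integral $\int_\Sigma \Sec_x^*\omega$, where $\omega$ is the unit-volume rotation-invariant form on $S^{n+1}$. The dependence on $\Sigma$ measures the degree of $\Sec_x$ on the closed manifold $\Sigma_1\cup(-\Sigma_2)$ for two choices of Seifert hypersurface, hence is an integer; so $\map(x)=\wt\map(x)\bmod 1$ is well-defined, independent of $\Sigma$, which is what makes an ``integration by parts'' argument possible.

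For the first bullet, I would construct a primitive $\eta$ of $\omega$ on $S^{n+1}\setminus\{(0,\ldots,0,1)\}$: in spherical coordinates with polar angle $\theta$ measured from the north pole, $\omega$ factors as a constant multiple of $\sin^n\theta\,d\theta\wedge d\Omega$ for $d\Omega$ the volume form on the unit $S^n$, and hence admits an explicit primitive of the form $\myf(\cos\theta)\cdot d\Omega$ where $\myf$ is an antiderivative of the polar factor; this is the function appearing in \eqref{eq:pullbackofeta}. Under the hypothesis $(0,\ldots,0,1)\notin\Sec_x(M)$, the positive vertical ray from $x$ is disjoint from $M=\partial\Sigma$; Stokes' theorem applied after excising small neighborhoods of $\Sigma\cap\Sec_x^{-1}(\text{north pole})$ then gives $\int_\Sigma\Sec_x^*\omega=\int_M\Sec_x^*\eta+(\text{integer})$, the integer counting the local degree of $\Sec_x$ at each excised point. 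Expressing $\Sec_x^*\eta$ in the ambient Cartesian coordinates on $\R^{n+2}$ and simplifying yields the displayed formula, modulo $\Z$.

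For the second bullet, $\map^{-1}(t)$ is a smooth hypersurface of $\R^{n+2}\setminus M$ at any non-critical $t$ by the implicit function theorem. Boundedness follows from the pointwise estimate $|\wt\map(x)|=O(\|x\|^{-(n+1)})$ on the integrand: far from $M$ the solid angle decays, so for $t\neq 0$ the preimage is confined to a ball. The delicate part is the behavior of $\map$ near $M$; the content of Section~\ref{sec:linearbehavior} is a local normal form $\map(x)=\theta/(2\pi)+O(\rho)$ in disk-bundle fiber coordinates $(\rho,\theta)\in D^2$ over a tubular neighborhood of $M$. This model forces $\overline{\map^{-1}(t)}\setminus\map^{-1}(t)=M$, supplies the orientation inherited from $\map\colon\R^{n+2}\setminus M\to S^1$, and shows that the closure is a topological submanifold of $\R^{n+2}$ smooth up to the boundary $M$.

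The third bullet follows by combining the previous two: inside the tubular neighborhood, the local model presents $\map^{-1}(t)$ as a graph over $M$ of $\theta=2\pi t+O(\rho)$, which has volume comparable to $\vol(M)\cdot(\text{fiber radius})$; outside the tubular neighborhood, $\map^{-1}(t)$ is a smooth closed hypersurface sitting in a compact region by boundedness, hence has finite volume by compactness. The principal obstacle, in my view, is establishing the local normal form underlying the second bullet: although the physical analog for current loops in $\R^3$ suggests $\map$ behaves like the angular coordinate near $M$, the explicit formula displayed in the first bullet is genuinely singular on $M$, and verifying the angular-plus-smooth-error asymptotics in arbitrary codimension-$2$ embeddings requires a careful reduction to a model where $M$ is linearized. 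Once this normal form is secured, the topological, differentiable, and volume-theoretic statements all follow by standard compactness and Stokes-type arguments.
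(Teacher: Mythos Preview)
Your overall strategy matches the paper's almost exactly: Stokes' theorem with an explicit primitive $\eta$ of the volume form on $S^{n+1}\setminus\{z\}$ for the formula; decay of the solid angle at infinity for boundedness; and comparison with the linear model (the tangent plane to $M$) for the local behavior near $M$. Two points deserve correction.

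First, a technical inaccuracy: the local normal form you assert, $\map(x)=\theta/(2\pi)+O(\rho)$, is stronger than what the paper proves and in fact stronger than what is true. Already for the circle (Section~\ref{sec:linearbehavior}, which treats only that case) the radial derivative $\partial\map/\partial\varepsilon$ has a logarithmic pole, so the error from the angular coordinate cannot be $O(\rho)$. For general $M$ the paper proves only (Main Estimate Theorem~\ref{prop:nextbound}) that $|\partial\map/\partial\phi-\epsilon|\le C_\theta r^{1-\theta}$ and $|\partial\map/\partial r|\le C_\theta r^{-\theta}$ for any $\theta\in(\tfrac{n+2}{n+4},1)$; integrating gives only $\map=\epsilon\phi+\rho(x)+O(r^{1-\theta})$. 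This H\"older-type estimate is still enough: it makes $\partial\map/\partial\phi$ bounded away from zero, hence $\map$ restricted to each normal circle is a diffeomorphism, and the fiber $\map^{-1}(t)$ is a graph $\phi=\Theta_t(r,x)$ whose derivatives blow up only like $r^{-\theta}$, which is integrable. So your argument for the second and third bullets survives once the overly optimistic $O(\rho)$ is replaced by $O(r^{1-\theta})$; but establishing even this weaker bound is where all the work lies (Separation Lemma, Drilled Ball Lemma, Approximation Lemma in Sections~\ref{sec:behavior}--\ref{sec:behavior2}), not in the circle-only Section~\ref{sec:linearbehavior} you cite.

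Second, a minor variant in the Stokes step: you excise neighborhoods of $\Sigma\cap\Sec_x^{-1}(\text{north pole})$ and absorb the boundary contributions as an integer. The paper instead shows directly (Proposition~\ref{prop:Seifertomits}) that one can always choose a Seifert hypersurface $\Sigma$ with $z\notin\Sec_x(\Sigma)$, so no excision is needed. Both routes are valid.
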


The structure of the paper is the following. Section~\ref{sec:intro} defines rigorously the solid angle map $\map$.
Then a formula \eqref{eq:pullbackofeta}
for $\map$ in terms of an integral of an $n$--form over $M$ is given. In Section~\ref{sec:propernessofS}
we prove that for $t\neq 0$ the inverse images of $\map^{-1}(t)\subset\R^{n+1}$ are bounded.  It is also proved that $\map$ extends
to a smooth map $S^{n+2}\setminus M\to\R/\Z$. In Section~\ref{sec:linear}
we calculate explicitly $\map$ for a linear subspace. The resulting simple formula is used later
in the proof of the local behavior of $\map$ for general $M$. In Section~\ref{sec:circle}  we derive the Maxwell--Paxton formula for $\map$ if $M$
is a circle. These explicit calculations allow us to study the local behavior of $\map$ in detail and give insight for the general case.
In Sections~\ref{sec:behavior} and~\ref{sec:behavior2} we study the local behavior of $\map$ for general $M$. This is the most technical part of the paper.
We prove Theorem~\ref{thm:main} in Section~\ref{sec:seifert}.

\begin{ack}
The present article is an extended and generalized (to arbitrary dimension) 
version of the second part of the University of Edinburgh thesis of the second author \cite{Dee} written under supervision of Andrew Ranicki. A significant part of the paper was written during a Ph.D. internship of the second author in Warsaw. He expresses his gratitude to WCMS for the financial support.

The authors would also like to thank Jae Choon Cha and Brendan Owens for stimulating discussions.
The first author was supported by the National Science Center grant 2016/22/E/ST1/00040. The second author was supported by Chiang Mai University.
\end{ack}
\section{Definition of the map $\map$}\label{sec:definitionofS}
Consider a point $x\in\R^{n+2}$ and define the map $\Sec_x\colon\R^{n+2}\setminus\{x\}\to S^{n+1}$ given by
\[\Sec_x(y)=\frac{y-x}{\|y-x\|}.\]
The map $\Sec_x$ can be defined geometrically as the radial projection from $x$ onto the sphere:
for a point $y\neq x$ take a half-line $l_{xy}$ stemming from $x$ and passing through $y$.
We define $\Sec_x(y)$ as the unique point of intersection of $l_{xy}$ and $S_x$, where $S_x$ is the unit sphere with center $x$.

Let $\omega_{n+1}$ be the $(n+1)$-form
\[\omega_{n+1}=\sum_{j=1}^{n+2}(-1)^{j+1}u_jdu_1\wedge\ldots\wedge\widehat{du_j}\wedge\ldots\wedge du_{n+2}\]
on $S^{n+1}$. Define also
\[\sigma_{n+1}=\int_{S^{n+1}} \omega_{n+1},\]
that is, the volume of the unit $(n+1)$--dimensional sphere; for instance, $\nobreak{\sigma_1=2\pi}$, $\sigma_2=4\pi$.

Let $M$ be a closed
oriented connected and smooth manifold in $\R^{n+2}$ with $\dim M=n$.
\begin{definition}
A compact oriented $(n+1)$-dimensional submanifold $\Sigma$ of $\R^{n+2}$ such that $\partial\Sigma=M$ is called
a \emph{Seifert hypersurface} for $M$.
\end{definition}
\begin{remark}
For simplicity, throughout the paper we drop the assumption that $\Sigma$ be connected.
\end{remark}
By Erle \cite{Erle} any closed oriented submanifold $M\subset\R^{n+2}$ admits a Seifert hypersurface. Given such a surface $\Sigma$,
consider $x\in \R^{n+2}\setminus\Sigma$.
The map $\Sec_x$ restricts to a map from $\Sigma$ to $S^{n+1}$, which we will still denote by $\Sec_x$.
\begin{definition}
The \emph{solid angle} of $\Sigma$ viewed from $x$ is defined as
\begin{equation}\label{eq:defofwtmap}
\wt{\map}(x)=\frac{1}{\sigma_{n+1}}\int_{\Sigma}\Sec_x^*\omega_{n+1}.
\end{equation}
In other words, $\wt{\map}(x)$ is a signed area of a spherical surface spanned by $\Sec_x(M)$, that is, the radial projection of $M$ from the
point $x$.
\end{definition}
\begin{remark}
One should not confuse the solid angle with the cone angle studied extensively by many authors, like \cite{DKKS,CKS,Denne}.
To begin with, the cone angle is unsigned and takes values in $\R_{\ge 0}$, whereas the solid angle is an element in $\R/\Z$.
This indicates that there exist fundamental differences between the two notions.
\end{remark}

We have the following fact.
\begin{lemma}\label{lem:existence}
The value $\wt{\map}(x)\bmod 1$ does not depend on the choice of $\Sigma$. In particular $\wt{\map}$ induces a well-defined function
\[\map\colon\R^{n+2}\setminus M\to\R/\Z\cong S^1.\]
\end{lemma}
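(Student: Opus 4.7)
The plan is to interpret $\wt{\map}(x)$ as the integral of a closed differential $(n{+}1)$-form against an $(n{+}1)$-chain, and then to exploit the fact that $\R^{n+2}\setminus\{x\}$ has the homotopy type of a sphere, so two relative cycles with the same boundary can differ only by an integer multiple of the fundamental class.

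Fix $x\in\R^{n+2}\setminus M$ and set $\eta:=\Sec_x^*\omega_{n+1}$; this is automatically a closed $(n{+}1)$-form on $\R^{n+2}\setminus\{x\}$ because $\omega_{n+1}$ is top-dimensional on $S^{n+1}$ and pullback commutes with $d$. I may restrict attention to Seifert hypersurfaces $\Sigma$ not containing $x$, since $\Sigma$ has codimension one and can be perturbed off of $x$. For two such hypersurfaces $\Sigma_1,\Sigma_2$, the chain $C:=\Sigma_1-\Sigma_2$ has $\partial C = M - M = 0$, so it represents a class in $H_{n+1}(\R^{n+2}\setminus\{x\};\Z)$. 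Since $\R^{n+2}\setminus\{x\}$ deformation retracts onto the unit sphere $S_x$ centered at $x$, this group is $\Z$, generated by $[S_x]$, and hence $[C]=k[S_x]$ for some integer $k$. By de Rham,
\[\int_C \eta \;=\; k\int_{S_x}\eta,\]
and since the restriction of $\Sec_x$ to $S_x$ is simply the translation $y\mapsto y-x$, an orientation-preserving isometry onto $S^{n+1}$, one gets $\int_{S_x}\eta=\int_{S^{n+1}}\omega_{n+1}=\sigma_{n+1}$. Dividing by $\sigma_{n+1}$ yields $\wt{\map}_1(x)-\wt{\map}_2(x)=k\in\Z$, which is the desired statement.

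The main obstacle is technical rather than conceptual: the chain $C=\Sigma_1-\Sigma_2$ need not be a smooth submanifold, because $\Sigma_1$ and $\Sigma_2$ share their boundary $M$ and may intersect each other non-transversally in their interiors. I would dispose of this by working in the category of singular (say Lipschitz) chains, in which the boundary cancellation along $M$ is formal and the integral and homology class of $C$ are well-defined; alternatively one may perturb $\Sigma_2$ so that $\Sigma_1\cup(-\Sigma_2)$ becomes a closed topological submanifold after smoothing corners along $M$. With either approach the integer $k$ acquires its geometric meaning as the winding number of the glued cycle around $x$, equivalently the degree of $\Sec_x$ restricted to it.
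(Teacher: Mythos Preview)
Your argument is correct and rests on the same core idea as the paper's: the difference of the two solid-angle integrals is the pairing of an integral cohomology class with an integral cycle, hence an integer. The framing, however, differs in a useful way. The paper works intrinsically on the space $\Xi=\Sigma\cup\Sigma'$ (assuming the interiors meet transversally), invokes Mayer--Vietoris to see $H_{n+1}(\Xi;\Z)$, and then observes that $\frac{1}{\sigma_{n+1}}\Sec_x^*\omega_{n+1}$ is an \emph{integral} de~Rham class on $\Xi$ because it is pulled back from a generator of $H^{n+1}(S^{n+1};\Z)$. You instead push everything into the ambient punctured space $\R^{n+2}\setminus\{x\}$, where the homology is transparent ($H_{n+1}\cong\Z$ generated by a small sphere) and the integer acquires the concrete meaning of a winding number/degree. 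Your route avoids the Mayer--Vietoris computation and the transversality hypothesis on the two surfaces, at the mild cost of having to say a word about why the singular chain $\Sigma_1-\Sigma_2$ makes sense---which you address adequately. Either version is fine; yours is arguably the more economical and geometric.
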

\begin{proof}
Take another surface $\Sigma'$. For simplicity assume that the interiors of $\Sigma$ and $\Sigma'$ intersect transversally.
Let $\Xi=\Sigma\cup\Sigma'$. It is an exercise in Mayer-Vietoris sequence to see that $H_{n+1}(\Xi,\Z)\cong\Z^{r+r'+s-2}$,
where $r$ and $r'$ are numbers of connected components of $\Sigma$ and $\Sigma'$, while $s$ is the number of connected components
of $\Sigma\cap\Sigma'$. The union $\Sigma\cup\Sigma'$ is a cycle and it defines an element in $H_{n+1}(\Xi,\Z)$.

On the other
hand, $\omega_{n+1}$ is a generator of $H^{n+1}(S^{n+1};\Z)$. The pull-back $\frac{1}{\sigma_{n+1}}\Sec_x^*\omega_{n+1}$
belongs to $H^{n+1}(\Xi;\Z)$.
With this point of view the integral
\[\int_{\Xi}\Sec_x^*\omega_{n+1}\]
can be regarded as the evaluation of an integral cohomology class on an integral cycle of $\Xi$, so it is an integer.
Therefore,
\[\int_{\Sigma}\Sec_x^*\omega_{n+1}-\int_{\Sigma'}\Sec_x^*{\omega_{n+1}}\in\Z.\]

This shows that $\map=\wt{\map}\bmod 1$ is well defined.
To determine the domain of $\map$, notice that for any point $x\notin M$ there exists $\Sigma$
as above that misses $x$. This means that $\map$ is defined on the whole complement of $M$.
\end{proof}

From the definition of $\map$, we recover its first important property.
\begin{proposition}\label{prop:smooth}
The map $\map$ is  smooth away from the complement of $\nobreak{M\subset\R^{n+2}}$.
\end{proposition}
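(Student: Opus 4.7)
The plan is to show that locally around any point $x_0 \in \R^{n+2} \setminus M$, the map $\map$ coincides with a smooth representative $\wt{\map}$ defined by the integral formula \eqref{eq:defofwtmap} for a judiciously chosen Seifert hypersurface. Smoothness then follows by differentiation under the integral sign.

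First I would fix $x_0 \in \R^{n+2} \setminus M$ and produce a Seifert hypersurface $\Sigma$ for $M$ that misses an open neighborhood $U$ of $x_0$. This is exactly the last step used in the proof of Lemma~\ref{lem:existence}: starting from any Seifert hypersurface (which exists by Erle's theorem), one may apply an ambient isotopy compactly supported away from $M$ that pushes $\Sigma$ off a small ball around $x_0$. After this isotopy, $\Sigma$ is still a Seifert hypersurface for $M$, and we may choose $U$ to be an open ball around $x_0$ disjoint from $\Sigma$.

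Next I would examine the integrand of \eqref{eq:defofwtmap}. For $(x,y) \in U \times \Sigma$, we have $\|y-x\| \geq \dist(U,\Sigma) > 0$, so the map $(x,y) \mapsto \Sec_x(y) = (y-x)/\|y-x\|$ is a $C^\infty$ map from $U \times \Sigma$ into $S^{n+1}$. Consequently the pulled-back form $\Sec_x^*\omega_{n+1}$ depends smoothly on $x \in U$ as a smooth $(n+1)$-form on $\Sigma$. Because $\Sigma$ is compact and the integrand together with all its $x$-derivatives is uniformly bounded on $U \times \Sigma$, the standard differentiation-under-the-integral-sign theorem shows that
\[
\wt{\map}(x) \;=\; \frac{1}{\sigma_{n+1}}\int_\Sigma \Sec_x^*\omega_{n+1}
\]
is a smooth $\R$-valued function of $x \in U$.

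Finally, on $U$ the identity $\map(x) = \wt{\map}(x) \bmod 1$ holds with this particular $\Sigma$, so $\map|_U$ is the composition of the smooth map $\wt{\map}|_U$ with the smooth covering projection $\R \to \R/\Z$, hence smooth. Since $x_0 \in \R^{n+2}\setminus M$ was arbitrary, $\map$ is smooth on all of $\R^{n+2} \setminus M$. The main (mild) obstacle is the isotopy step producing a Seifert hypersurface disjoint from a neighborhood of the prescribed point $x_0$; once this is in hand, the rest is a routine application of differentiation under the integral sign, justified by compactness of $\Sigma$ and the positive distance $\dist(U,\Sigma) > 0$.
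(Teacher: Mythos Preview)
Your proposal is correct and follows essentially the same approach as the paper: choose a Seifert hypersurface avoiding a neighborhood of the given point, then use smooth dependence of $\Sec_x$ on $x$ and compactness of $\Sigma$ to conclude that $\wt{\map}$ is smooth there. The paper's proof is terser (it simply asserts the existence of such a $\Sigma$ rather than invoking an isotopy), but the argument is the same.
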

\begin{proof}
Take a point $y\notin M$. There exists a smooth compact surface $\Sigma$ such that $\partial\Sigma=M$ and $y\notin\Sigma$. Then, a small neighborhood $U$
of $y$ is disjoint from $\Sigma$. Thus, the map $\Sec_x$ depends smoothly on the parameter $x$. It follows that $\wt{\map}$ is smooth in $U$.
\end{proof}

\subsection{$\map$ via integrals over $M$}

The fact that the definition of $\map(x)$ involves a choice of a Seifert hypersurface $\Sigma$ is quite embarrassing. In fact, it might be hard
to find estimates for $\map$ because we have little control over $\Sigma$. We want to define $\map$ via integrals over $M$ itself.
The key tool will be the Stokes' formula.
We use the fact that while the volume form $\omega_{n+1}$ on $S^{n+1}$ itself is not exact, its restriction $\omega'$ to
the punctured sphere $S^{n+1}\setminus \{z\}$ is.

We need the following result.
\begin{proposition}\label{prop:Seifertomits}
Let $x\in\R^{n+2}\setminus M$ and let $z\in S^{n+1}$ be such that $z\notin\Sec_x(M)$. Then, there exists a Seifert hypersurface $\Sigma$ for $M$
such that $\Sec_x(\Sigma)$ misses $z$.
\end{proposition}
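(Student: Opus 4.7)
The plan is to modify a Pontryagin--Thom representative of the generator of $H^1(\R^{n+2}\setminus M;\Z)$ so that it becomes constant on the closed ray $\ell_z := \{x + sz : s \ge 0\}$, and then take $\Sigma$ to be the closure of a level set at a value different from that constant.

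First I would record the geometric setup. The hypotheses $x \notin M$ and $z \notin \Sec_x(M)$ together imply that the closed ray $\ell_z$ is disjoint from the compact manifold $M$, hence $\dist(M,\ell_z) > 0$. A sufficiently thin tubular neighborhood $M \times D^2$ of $M$ can therefore be chosen disjoint from $\ell_z$, so that $\ell_z$ is a closed contractible subset of $\ol{\R^{n+2}\setminus M\times D^2}$. Using Erle's construction as recalled in the introduction, pick a smooth map
\[
f : \ol{\R^{n+2}\setminus M\times D^2} \to \R/\Z
\]
representing the generator of $H^1(\R^{n+2}\setminus M)$ and restricting to the projection $M\times S^1 \to S^1$ on the boundary; the preimage $f^{-1}(t)$ of any regular value $t$ is then a Seifert hypersurface for $M$.

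Next I would correct $f$ along $\ell_z$. Since $\ell_z \cong [0,\infty)$ is contractible, $f|_{\ell_z}$ lifts to a smooth function $h_0 : \ell_z \to \R$. Combining the Whitney extension theorem with a bump function supported away from $M\times D^2$, I extend $-h_0$ to a smooth $h : \R^{n+2}\setminus M \to \R$ that vanishes identically on an open neighborhood of $M\times D^2$. The corrected map $f' := (f + h) \bmod \Z$ represents the same cohomology class as $f$ (the correction is globally real-valued, hence a coboundary), coincides with $f$ on a neighborhood of $M\times S^1$ so that the Pontryagin--Thom setup is preserved, and by construction satisfies $f'|_{\ell_z} \equiv 0 \in \R/\Z$.

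Finally, Sard's theorem supplies a regular value $t \neq 0$ of $f'$, and $\Sigma := \ol{(f')^{-1}(t)} = (f')^{-1}(t) \cup M$ is a Seifert hypersurface for $M$. Since $f'(\ell_z) = \{0\}$ does not contain $t$, the open level set $(f')^{-1}(t)$ is disjoint from $\ell_z$, and the closure only adds $M$, which is also disjoint from $\ell_z$. Therefore $\Sigma \cap \ell_z = \emptyset$, i.e.\ $z \notin \Sec_x(\Sigma)$. The main delicacy is the compatibility of the correction $h$ with the Pontryagin--Thom boundary data at $M$: $h$ must vanish on a full neighborhood of $M$, not merely be small there, so that the behavior of $f$ on $M\times S^1$ is preserved exactly. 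This is arranged precisely because $\ell_z$ and a small closed tubular neighborhood of $M$ are disjoint closed subsets of $\R^{n+2}\setminus M$.
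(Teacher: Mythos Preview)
Your approach is correct and genuinely different from the paper's. The paper argues by direct geometric surgery: starting from an arbitrary Seifert hypersurface $\Sigma$ transverse to the half-line $H=\{x+tz:t>0\}$, it removes the finitely many points of $\Sigma\cap H$ one at a time by cutting out a small disk in $\Sigma$ around the outermost intersection and replacing it with a long tube along $H$ capped off by a piece of a large sphere centered at $x$. Your argument instead modifies the Pontryagin--Thom map itself so that it becomes constant on $\ell_z$, and then picks a regular fibre avoiding that constant. The paper's route is more elementary and hands-on (it needs only the existence of \emph{some} $\Sigma$, not the full Pontryagin--Thom framework), whereas yours is cleaner conceptually and would generalise immediately to avoiding any closed contractible set disjoint from $M$, not just a ray.

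One point does need attention: compactness of $(f')^{-1}(t)$. For the Pontryagin--Thom argument to produce a \emph{compact} Seifert hypersurface, the map $f$ is implicitly taken on the complement of $M\times D^2$ in $S^{n+2}$ (equivalently, $f$ is constant outside a large ball in $\R^{n+2}$). Your correction $h$ is obtained by Whitney-extending the lift $-h_0$ off the unbounded ray $\ell_z$, and as written nothing guarantees that $h$ extends across the point at infinity, so $f'=f+h$ may fail to have bounded level sets. The fix is easy: run the whole construction on $S^{n+2}$, where $\ell_z$ compactifies to a closed arc that is still contractible and disjoint from $M$, the lift $h_0$ is automatically bounded, and the smooth extension $h$ lives on a compact manifold with boundary. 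With that adjustment your proof goes through.
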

\begin{remark}\label{rem:znotinM}
The result is non-trivial in the sense that
one can construct a Seifert surface $\Sigma$ even for an unknot in $\R^3$ such that the restriction $\Sec_x|_\Sigma$ is onto.
However, notice that $\Sec_x|_M$ is never onto $S^{n+1}$ in general because $\dim M<\dim S^{n+1}$.
\end{remark}
\begin{proof}
Let $H$ be the half line $\{x+tz,\,t > 0\}$. In other words $H=\Sec_x^{-1}(z)$.
\begin{figure}
\begin{tikzpicture}
\begin{scope}[xshift=-2cm,scale=0.7]
\shade[inner color=green, outer color=green!10!white, draw=green!50!black]
(0,0) ++ (10:2.2cm) node[right] {$\Sigma$} arc (10:80:2.2cm) -- ++ (80:1.6cm)  arc (80:10:3.8cm) -- cycle;
\fill[rotate=45, color=red!30!white, draw=black] (0,0) ++ (0:3cm) ellipse (0.3cm and 0.4cm);
\fill[color=black] (0,0) circle (0.1cm) node[above] {$x$};
\draw (0,0) ++ (42:3cm) node [scale=0.8,below=1mm] {$D$};
\draw (0,0) -- ++ (45:1cm) node[above] {$H$} -- ++(45:1cm) ++ (45:1cm) -- ++(45:2.5cm);
\fill (0,0) ++ (45:3cm) circle (0.1cm) node[above=2mm] {$w'$};
\end{scope}
\begin{scope}[xshift=+2cm,scale=0.7]
\shade[inner color=green, outer color=green!10!white, draw=green!50!black]
(0,0) ++ (10:2.2cm) arc (10:80:2.2cm) -- ++ (80:1.6cm)  arc (80:10:3.8cm) -- cycle;
\shade[inner color=blue!50!white, outer color=blue!10!white, draw=black]
(0,0) ++ (38:4.5cm) -- ++(218:1.5cm) arc (300:150:0.4cm) -- ++ (48:1.5cm) node[below,left=1mm,scale=0.8] {$T$}
--cycle;
\shade[inner color=orange, outer color=orange!10!white, draw=orange!50!black]
(0,0) ++ (10:4.5cm) arc (10:80:4.5cm) -- ++ (80:2cm) arc (80:10:6.5cm) node[below] {$S'$} -- cycle;
\fill[color=black] (0,0) circle (0.1cm) node[above] {$x$};
\shade[rotate=45, left color=blue!10!white, right color=blue!20!white, draw=black] (0,0) ++ (0:5.5cm) ellipse (0.3cm and 0.4cm);
\draw (0,0) ++ (48:4.5cm) node[above=3mm,scale=0.8] {$\partial_+T$};
\draw (0,0) -- ++ (45:1cm) node[above] {$H$} -- ++(45:1cm) ++ (45:3.5cm) -- ++(45:1.5cm);
\end{scope}
\end{tikzpicture}
\caption{Proof of Proposition~\ref{prop:Seifertomits}. Reducing the intersection points of $H$ with $\Sigma$.
The disk $D$ is replaced by the tube $T$ and a large sphere.}\label{fig:Seifertomits}
\end{figure}
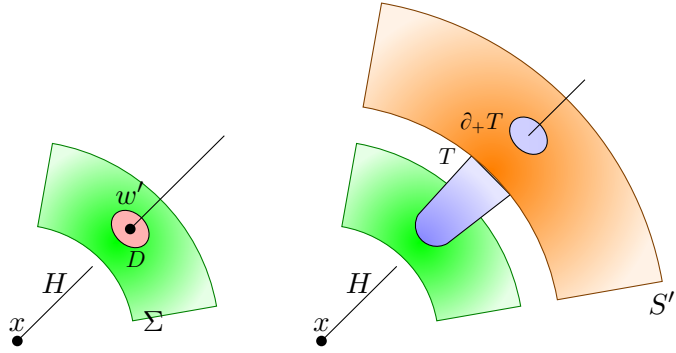
Choose any Seifert hypersurface $\Sigma$. We might assume that $H$ is transverse to $\Sigma$. The set of intersection points
of $H$ and $\Sigma$ is bounded and discrete, hence finite. Let $\{w_1,\ldots,w_m\}=H\cap\Sigma$ and assume these
points are ordered in such a way that on $H$ the point $w_1$ appears first (with smallest value of $t$), then $w_2$ and so on.

Choose the last point $w_m$ of this intersection and a small disk $D\subset\Sigma$
with center $w_m$. We can make $D$ small enough so that for any $w'\in D$ the intersection
\[\{x+tw',t>1\}\cap\Sigma\]
is empty. Set now
\[T=\{x+tw',t\ge 1,w'\in D\}\textrm{ and }\partial_v T=\{x+tw',t\ge 1,w'\in\partial D\}.\]
Consider a sphere $S=S(x,r)$, where $r$ is large. Set $S'=S\setminus (S\cap T)$.
Increasing $r$ if necessary we may and will assume that $S'$ is disjoint from $\Sigma$.
The new Seifert hypersurface
is defined as
\[\Sigma'=(\Sigma\setminus D)\cup [(\partial_v T)\cap B(x,r)]\cup S'.\]
With this construction we have $H\cap\Sigma'=\{w_1,\ldots,w_{m-1}\}$. Repeating this construction finitely many times
we obtain a Seifert hypersurface disjoint from $H$.
\end{proof}

Let $\eta_z$ be an $n$--form on $S^{n+1}\setminus\{z\}$ such that $d\eta_z=\omega_{n+1}$ and suppose $\Sigma$ is a Seifert hypersurface for $M$
such that $z\notin\Sec_x(\Sigma)$. By Stokes' formula
\[\int_{\Sigma}\Sec_x^*\omega_{n+1}=\int_{M}\Sec_x^*\eta_z.
\]
Therefore we obtain the following formula for $\map$:
\begin{equation}\label{eq:defmap}
\map(x)=\frac{1}{\sigma_{n+1}}\int_M\Sec_x^*\eta_z\bmod 1.
\end{equation}
The necessity of making the map modulo 1 comes now from different choices of the point $z\in S^{n+1}\setminus M$.

We shall need an explicit formula for $\eta_z$.
For simplicity, we consider the case when $z=(0,0,\ldots,1) \in S^{n+1}\subset \mathbb R^{n+2}$ and define $\eta:= \eta_z$; the general case
can be obtained by rotating the coordinate system.  We start with the following proposition.
\begin{proposition}
Set $z=\{0,\ldots,0,1\}$. Let $\myf: S^{n+1} \setminus \{z\} \subset\mathbb R^{n+2} \to \mathbb R$ be a smooth function with variables $u=(u_1,u_2,\ldots,u_{n+2})$. If $\myf$ involves only $u_{n+2}$ (write $\myf(u) = \myf(u_{n+2})$ for convenience) and satisfies
\begin{align} \label{diff_equa}
(1-u^2_{n+2}) \myf'(u_{n+2}) - (n+1) u_{n+2}\myf(u_{n+2}) = (-1)^n,
\end{align}
then on $S^{n+1}\setminus\{z\}$ we have
$$d\left(\myf(u_{n+2}) \omega_n\right) = \omega_{n+1}.$$
\end{proposition}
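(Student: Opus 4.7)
The plan is a direct computation in two stages: first differentiate $\myf(u_{n+2})\,\omega_n$ in the ambient $\R^{n+2}$, then restrict to $S^{n+1}$ using the defining relation $\sum_{j=1}^{n+2} u_j\,du_j=0$. The hypothesis \eqref{diff_equa} will drop out by equating a single scalar coefficient.

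First I would record the standard identity $d\omega_n = (n+1)\,du_1\wedge\cdots\wedge du_{n+1}$ and write
\[
d\bigl(\myf(u_{n+2})\,\omega_n\bigr) = \myf'(u_{n+2})\,du_{n+2}\wedge\omega_n + (n+1)\,\myf(u_{n+2})\,du_1\wedge\cdots\wedge du_{n+1}.
\]
Parallel to this, I would decompose $\omega_{n+1}$ by separating the $j=n+2$ summand and shifting $du_{n+2}$ past the remaining $n$ one-forms, obtaining
\[
\omega_{n+1} = (-1)^n\,du_{n+2}\wedge\omega_n + (-1)^{n+1}\,u_{n+2}\,du_1\wedge\cdots\wedge du_{n+1}.
\]

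Next I would restrict to $S^{n+1}\setminus\{z\}$. On the dense open subset $\{u_{n+2}\neq 0\}$ the relation $\sum u_j\,du_j=0$ yields $du_{n+2} = -u_{n+2}^{-1}\sum_{j=1}^{n+1} u_j\,du_j$. The small computation $du_j\wedge\omega_n = u_j\,du_1\wedge\cdots\wedge du_{n+1}$ for each $j\le n+1$ (only the $k=j$ term of $\omega_n$ survives, and the sign $(-1)^{j+1}\cdot(-1)^{j-1}$ is $+1$) then gives
\[
du_{n+2}\wedge\omega_n\big|_{S^{n+1}} = -\,\frac{1-u_{n+2}^2}{u_{n+2}}\,du_1\wedge\cdots\wedge du_{n+1}.
\]

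Substituting into the two expressions above, both $d(\myf\,\omega_n)|_{S^{n+1}}$ and $\omega_{n+1}|_{S^{n+1}}$ are scalar multiples of $du_1\wedge\cdots\wedge du_{n+1}$; equating the two scalars and multiplying through by $u_{n+2}$ reduces the identity $d(\myf\,\omega_n)=\omega_{n+1}$ exactly to $(1-u_{n+2}^2)\myf'(u_{n+2})-(n+1)u_{n+2}\myf(u_{n+2})=(-1)^n$, which is the hypothesis. Finally, since both sides of $d(\myf\,\omega_n)=\omega_{n+1}$ are smooth on all of $S^{n+1}\setminus\{z\}$ and coincide on the dense open subset $\{u_{n+2}\neq 0\}$, they coincide everywhere. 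The only real obstacle is careful bookkeeping of signs in the two decompositions above; nothing is deep, but a slip of sign ruins the final ODE, so I would present those two steps in full detail and let the rest follow by substitution.
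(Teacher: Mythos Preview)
Your proof is correct and follows essentially the same approach as the paper: a direct computation in $\R^{n+2}$, then restriction to $S^{n+1}$ via the relation $\sum u_j\,du_j=0$, reducing the identity to the scalar ODE. The only organizational difference is that the paper packages the sphere relation as $u_i\,du_1\wedge\cdots\wedge du_{n+1}=(-1)^{n+i}u_{n+2}\,du_1\wedge\cdots\wedge\widehat{du_i}\wedge\cdots\wedge du_{n+2}$ rather than solving for $du_{n+2}$, and it writes $d(\myf\,\omega_n)-\omega_{n+1}$ as a single fraction with $u_{n+2}$ in the denominator whose numerator vanishes; your explicit density argument for the locus $u_{n+2}=0$ is a small improvement in rigor over that presentation.
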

\begin{proof}
Note that
$$u_1^2 + \cdots + u_{n+2}^2 = 1 \quad \textup{implies} \quad u_1du_1 + \cdots + u_{n+2}du_{n+2} = 0,$$
and hence
$$u_i du_1\wedge\cdots\wedge du_{n+1}=(-1)^{n+i}u_{n+2}du_1\wedge\cdots\wedge\widehat{du_i}\wedge\cdots\wedge du_{n+2} $$
for all $i\in\{1,2,\ldots,n+1\}$. Therefore, by \eqref{diff_equa},
\begin{align*}
&d\left(\myf(u_{n+2}) \omega_n\right) - \omega_{n+1} =\myf'(u_{n+2})du_{n+2}\wedge\omega_n+\myf(u_{n+2})d\omega_n-\omega_{n+1}=\\
&= \frac{(-1)^n + (n+1)u_{n+2}\myf(u_{n+2}) - \myf'(u_{n+2})(1-u^2_{n+2})}{u_{n+2}}du_1\wedge\cdots\wedge du_{n+1}
\end{align*}
is a zero $(n+1)$-form.
\end{proof}
To obtain a formula for $\eta$, it remains to solve \eqref{diff_equa}. Rewriting \eqref{diff_equa}, we have
\begin{align*}
\myf'(u_{n+2}) - \frac{(n+1) u_{n+2}}{(1-u^2_{n+2})}\myf(u_{n+2}) = \frac{(-1)^n}{(1-u^2_{n+2})}.
\end{align*}
The integrating factor of this ordinary differential equation is $(1-u^2_{n+2})^{\frac{n+1}{2}}$, so the general solution of \eqref{diff_equa} can be written as
\begin{equation} \label{general_sol}
(1-u^2_{n+2})^{\frac{n+1}{2}} \myf(u_{n+2}) = (-1)^n \int (1-u^2_{n+2})^{\frac{n-1}{2}} du_{n+2}.
\end{equation}
The requirement that the solution be smooth at $u_{n+2}=-1$ translates into the following
formula
\begin{equation}\label{eq:gen_sol2}
\myf(u_{n+2})=(-1)^n{(1-u_{n+2}^2)}^{-\frac{n+1}{2}}\int_{-1}^{u_{n+2}} {(1-s^2)}^{\frac{n-1}{2}}ds.
\end{equation}
The integral in \eqref{eq:gen_sol2} can be explicitly calculated. If $n$ is odd, the result is a polynomial. If $n$ is even,
successive integration by parts eventually reduces the integral to $\int\sqrt{1-s^2}\,ds$. For small values of $n$, the function
$\myf$ is as follows.
\begin{align*}
n&=1;& \myf(u_3)&=(u_3-1)^{-1}\\
n&=2;& \myf(u_4)&=-\frac14(\pi+2u_4\sqrt{1-u_4^2}+2\arcsin u_4)(u_4^2-1)^{-3/2}\\
n&=3;& \myf(u_5)&=\frac13(u_5-2)(u_5-1)^{-1}.
\end{align*}
\begin{definition}\label{def:whatiseta}
From now on, we shall assume that $\eta=\myf(u_{n+2})\omega_n$, where $\myf$ is as in \eqref{eq:gen_sol2}.
\end{definition}
We see that $\myf$ is smooth for $u_{n+2}\in[-1,1)$ and has a pole at $u_{n+2}=1$. We shall work mostly in regions, where $u_{n+2}$ is bounded away
from $1$, so that $\myf$ and its derivatives will be bounded.

\subsection{The pull-back of the form $\eta$}\label{sec:pullback}
We shall gather some formulae for evaluating the pull-back $\Sec_x^*\eta$. This will allow us to estimate
the derivative of~$\map$.

First notice that
\begin{equation}\label{eq:diff}
d_y\frac{y_i-x_i}{\|y-x\|}=\frac{dy_i}{\|y-x\|}+(y_i-x_i)d_y\|y-x\|^{-1},
\end{equation}
where we $d_y$ means that we take the exterior derivative with respect to the $y$ variable. Consider the expression
\[\Sec_x^* du_1\wedge\cdots\wedge\widehat{du_i}\wedge\cdots\wedge du_{n+1}.\]
To calculate the pull-back we replace $du_i$ by $d_y\frac{y_i-x_i}{\|y-x\|}$. Notice that if in the wedge product the term
$(y_i-x_i)d_y\|y-x\|^{-1}$ from \eqref{eq:diff} appears twice or more, this term will be zero. Therefore
the pull-back takes the form
\begin{multline*}
\Sec_x^* du_1\wedge\cdots\wedge\widehat{du_i}\wedge\cdots\wedge du_{n+1}=\frac{1}{\|y-x\|^n}dy_1\wedge\cdots\wedge\widehat{dy_i}\wedge\cdots dy_{n+1}+\\
\frac{1}{\|y-x\|^{n-1}}\sum_{j\neq i} (-1)^{\theta(i,j)}(y_j-x_j) d_y\|y-x\|^{-1}\wedge dy_1\wedge\cdots\wedge\widehat{dy_i,dy_j}\wedge\cdots\wedge dy_{n+1},
\end{multline*}
where $\theta(i,j)$ is equal to $j-1$ if $j<i$ and $j-2$ if $j>i$.
Using the above expression together with
$$d_y\|y-x\|^{-1} = \frac{-1}{\|y-x\|^3} \left((y_1 - x_1)dy_1 + \cdots+ (y_{n+1} - x_{n+1}) dy_{n+1}\right),$$
we can calculate the pull-back of the form
$$\omega_{n}=\sum_{i=1}^{n+1} (-1)^{i+1}u_idu_1\wedge\cdots\wedge\widehat{du_i}\wedge\cdots\wedge du_{n+1}.$$
We calculate
\begin{multline*}
\Sec_x^*\omega_{n}=\frac{1}{\|y-x\|^{n+1}}\sum_{i=1}^{n+1}(-1)^{i+1}(y_i-x_i)dy_1\wedge\cdots\wedge\widehat{dy_i}\wedge\cdots\wedge dy_{n+1}+\\
+\frac{1}{\|y-x\|^{n+3}}\sum_{i=1}^{n+1}\sum_{j\neq i} (-1)^{i+\theta(i,j)} (y_i-x_i)^2(y_j-x_j)dy_i\wedge dy_1\wedge\cdots\wedge\widehat{dy_i,dy_j}\wedge\cdots\wedge dy_{n+1}\\
+\frac{1}{\|y-x\|^{n+3}}\sum_{i=1}^{n+1}\sum_{j\neq i} (-1)^{i+\theta(i,j)} (y_i-x_i)(y_j-x_j)^2 dy_j \wedge y_1\wedge\cdots\wedge\widehat{dy_i,dy_j}\wedge\cdots\wedge dy_{n+1}.
\end{multline*}
Notice that we can change the order of the sums in the last term of the above expression to be $\sum_{j=1}^{n+1}\sum_{i\neq j}$. Since $i+\theta(i,j) = j+\theta(j,i) \pm 1$, the last two sums cancel out. Hence, we obtain
\begin{equation}\label{eq:pullbackofomega}
\Sec_x^*\omega_{n}=\frac{1}{\|y-x\|^{n+1}}\sum_{i=1}^{n+1}(-1)^{i+1}(y_i-x_i)dy_1\wedge\cdots\wedge\widehat{dy_i}\wedge\cdots\wedge dy_{n+1}.
\end{equation}
In particular, using Definition~\ref{def:whatiseta} we get a proof of the first part of Theorem~\ref{thm:main}.
\begin{equation}\label{eq:pullbackofeta}\begin{split}
\Sec_x^*\eta=&\frac{1}{\|y-x\|^{n+1}}\myf\left(\frac{y_{n+2}-x_{n+2}}{\|y-x\|}\right)\\
&\cdot\sum_{i=1}^{n+1}(-1)^{i+1}(y_i-x_i)dy_1\wedge\cdots\wedge\widehat{dy_i}\wedge\cdots\wedge dy_{n+1}.
\end{split}
\end{equation}
If $n=1$ we obtain the following explicit formula, used in \cite{Dee}.
\begin{equation}\label{eq:formulan1}
\map(x_1,x_2,x_3)=\frac{1}{4\pi}\int_M \dfrac{(y_2-x_2)dy_1-(y_1-x_1)dy_2}{\|y-x\|^2(1-\dfrac{y_3-x_3}{\|y-x\|})}.
\end{equation}
It is worth mentioning the formula for $n=1$ and a general $z$ (not necessarily $(0,0,1)$), which was given in \cite[Theorem 5.3.7]{Dee}.
\[
\map(x_1,x_2,x_3)=\frac{1}{4\pi}\int_M\frac{\left(\dfrac{y-x}{\|y-x\|}\times z\right)\cdot Dy}{\|y-x\|\left(1-\dfrac{y-x}{\|y-x\|}\cdot z\right)},
\]
where $Dy=(dy_1,dy_2,dy_3)$.

We conclude by remarking that if
$$\omega_{n+1}=\sum_{i=1}^{n+2} (-1)^{i+1} u_i du_1\wedge\cdots\wedge\widehat{du_i}\wedge\cdots\wedge du_{n+2},$$
then analogous arguments as those that led to formula~\eqref{eq:pullbackofomega} imply that
\begin{equation}\label{eq:secondpullback}
\Sec_x^*\omega_{n+1}=\frac{1}{\|y-x\|^{n+2}}\sum_{i=1}^{n+2} (-1)^{i+1} (y_i-x_i)dy_1\wedge\cdots\wedge\widehat{dy_i}\wedge\cdots\wedge dy_{n+2}.
\end{equation}

\subsection{Estimates for derivatives of $\Sec_x^*\eta$}
The following results are direct consequences of the pull-back formula for $\eta$, \eqref{eq:pullbackofeta}. We record them
for future use in Sections~\ref{sec:propernessofS} and~\ref{sec:behavior}. Recall from Section~\ref{sec:pullback} that $\eta$ was defined as a form on
$S^{n+1}\setminus(0,\ldots,0,1)$. The form $\eta_z$ for general $z\in S^{n+1}$ is obtained by rotation of the coordinate system.
\begin{lemma}\label{lem:technicalestimate}
For any $m\ge 0$, there exists a constant $C^{\#}_{m,n}$ such that for each non-negative integers $k_1,\ldots,k_{n+2}$ such
that $\sum k_i=m$, the (higher) differential of the pull-back $\Sec_x^*\eta$ has the form
\[\frac{\partial^m}{\partial x_1^{k_1}\cdots \partial x_{n+2}^{k_{n+2}}}\Sec_x^*\eta=\sum_{i=1}^{n+1}H_idy_1\wedge\cdots\wedge\widehat{dy_i}\wedge\cdots
\wedge dy_{n+1},\]
where
\begin{equation}\label{eq:Hi}
H_i=\sum_{j=0}^m \myf^{(j)}\left(\frac{y_{n+2}-x_{n+2}}{\|y-x\|}\right) H_{i}^j,
\end{equation}
and $H_i^j$ are smooth functions satisfying $|H_i^j|\le C^{\#}_{m,n}\|y-x\|^{-(n+m-j)}$.
\end{lemma}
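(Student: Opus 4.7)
The plan is to proceed by induction on the order $m$ of differentiation. The base case $m=0$ is immediate from formula \eqref{eq:pullbackofeta}: take $H_i^0=(-1)^{i+1}(y_i-x_i)/\|y-x\|^{n+1}$ (and $H_i^j=0$ for $j\ge 1$), so that $|H_i^0|\le\|y-x\|^{-n}$ since $|y_i-x_i|\le\|y-x\|$. For the inductive step I would strengthen the hypothesis to the following structural statement: each $H_i^j$ is a finite sum of rational expressions of the form $P(y-x)/\|y-x\|^p$, where $P$ is a polynomial in the coordinates of $y-x$ of some degree $d$ satisfying the balance condition $p-d=n+m-j$. This condition immediately yields the desired estimate $|H_i^j|\le C\|y-x\|^{-(n+m-j)}$, because each factor $|y_i-x_i|$ in $P$ is bounded by $\|y-x\|$.

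To pass from $m$ to $m+1$, I would differentiate $H_i=\sum_j \myf^{(j)}(\cdot) H_i^j$ with respect to some $x_k$; the chain rule produces contributions of two types. First, differentiating the argument of $\myf^{(j)}$ gives
\[
\partial_{x_k}\!\left(\frac{y_{n+2}-x_{n+2}}{\|y-x\|}\right)=-\frac{\delta_{k,n+2}}{\|y-x\|}+\frac{(y_{n+2}-x_{n+2})(y_k-x_k)}{\|y-x\|^3},
\]
a sum of rational terms of balance $1$; multiplying by $\myf^{(j+1)}(\cdot)H_i^j$ contributes to the new coefficient of $\myf^{(j+1)}$, with total balance $(n+m-j)+1=n+(m+1)-(j+1)$, as required. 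Second, the term $\myf^{(j)}(\cdot)\partial_{x_k} H_i^j$ contributes to the new coefficient of $\myf^{(j)}$; using the two elementary identities
\[
\partial_{x_k}P=\text{polynomial of degree }d-1,\qquad \partial_{x_k}\|y-x\|^{-p}=\frac{p(y_k-x_k)}{\|y-x\|^{p+2}},
\]
one checks that differentiating a single term of balance $p-d$ produces two terms of balance $p-d+1=n+(m+1)-j$, again as required.

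The only real obstacle is the bookkeeping: one must verify that the structural form survives differentiation and that no coefficient of $\myf^{(\ell)}$ with $\ell>m+1$ is ever created. Both facts are transparent from the argument above, since the chain rule applied to $\myf^{(j)}(\cdot)$ raises the order of $\myf$ by at most one, and the two identities displayed above show that the balance increases by exactly one per differentiation. The explicit constant $C^{\#}_{m+1,n}$ can be taken to depend only on $C^{\#}_{m,n}$, on $n$, and on combinatorial constants arising from the chain rule, so finitely many iterations produce a constant depending only on $m$ and $n$, completing the induction.
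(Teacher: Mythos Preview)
Your proof is correct and follows exactly the approach of the paper, which simply states that the base case $m=0$ is immediate from \eqref{eq:pullbackofeta} and that ``the general case follows by an easy induction.'' You have carried out that induction in full detail; the only minor clarification worth adding is that the polynomials $P$ arising in your structural hypothesis are in fact homogeneous of degree $d$ (this is preserved by each differentiation step), which is what makes the estimate $|P(y-x)|\le C\|y-x\|^{d}$ hold for all $y\neq x$ and not just for $\|y-x\|$ bounded.
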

\begin{proof}
If $m=0$, the proof is a direct consequence of \eqref{eq:pullbackofeta}. The general case follows by an easy induction.
\end{proof}
As a consequence of Lemma~\ref{lem:technicalestimate} we prove the following fact.
\begin{lemma}\label{lem:mainbound}
For any $D<1$ and for any integer $m>0$, there is a constant $C^{D}_{n,m}$ such that if $z\in S^{n+1}$,
$y,x$ satisfy $\langle \frac{y-x}{\|y-x\|},z\rangle<D$
and $\sum k_i=m$, then
the derivative $\dfrac{\partial^m}{\partial x_1^{k_1}\cdots \partial x_{n+2}^{k_{n+2}}}\Sec_x^*\eta_z$ is a sum
of forms of type $H_{i_1,\ldots, i_n} dy_{i_1}\wedge\ldots\wedge dy_{i_n}$, where all the coefficients $H_{i_1,\ldots,i_n}$ are bounded by
$C^D_{n,m}\|y-x\|^{-n-m}$.
\end{lemma}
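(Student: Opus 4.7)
The plan is to reduce the general case to the distinguished case $z=(0,\ldots,0,1)$ already handled by Lemma~\ref{lem:technicalestimate}, and then to turn the hypothesis $\langle \tfrac{y-x}{\|y-x\|}, z\rangle<D$ into a bound on the factors $\myf^{(j)}$ that appear in that lemma.

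First I would pick an orthogonal transformation $A$ of $\R^{n+2}$ with $Az=(0,\ldots,0,1)$. By the convention preceding the lemma, $\eta_z=A^*\eta$, and the secant map is rotation-equivariant, $\Sec_{Ax}(Ay)=A\,\Sec_x(y)$, so in the rotated coordinates $\tilde x=Ax$, $\tilde y=Ay$ one has $\|\tilde y-\tilde x\|=\|y-x\|$ and the partial derivatives in $x$ are related to those in $\tilde x$ by a linear change whose matrix entries come from $A$. The coefficients of the resulting $n$-form in the basis $d\tilde y_{i_1}\wedge\cdots\wedge d\tilde y_{i_n}$ differ from those in the basis $dy_{i_1}\wedge\cdots\wedge dy_{i_n}$ by an invertible linear map depending only on $A$. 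Hence it suffices to establish the bound in the rotated frame.

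Next I would apply Lemma~\ref{lem:technicalestimate} directly: each coefficient $H_i$ has the form $H_i=\sum_{j=0}^m\myf^{(j)}\bigl(\tfrac{\tilde y_{n+2}-\tilde x_{n+2}}{\|y-x\|}\bigr)H_i^j$ with $|H_i^j|\le C^\#_{m,n}\|y-x\|^{-(n+m-j)}$. The hypothesis translates into $\tfrac{\tilde y_{n+2}-\tilde x_{n+2}}{\|y-x\|}=\langle\tfrac{y-x}{\|y-x\|},z\rangle\in[-1,D]$, a compact subinterval of $[-1,1)$. Since $\myf$ is smooth on $[-1,1)$---the antiderivative in \eqref{eq:gen_sol2} was chosen precisely to cancel the apparent singularity of $(1-u_{n+2}^2)^{-(n+1)/2}$ at $u_{n+2}=-1$---each $\myf^{(j)}$ with $0\le j\le m$ is bounded on $[-1,D]$ by a constant depending only on $m$, $n$, and $D$. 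Collecting these bounds and combining them with those of Lemma~\ref{lem:technicalestimate} yields $|H_i|\le C^D_{n,m}\|y-x\|^{-n-m}$; reverting the rotation then transfers the estimate to the original frame and completes the proof.

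The main obstacle is verifying that $\myf^{(j)}$ genuinely remains bounded all the way down to $u_{n+2}=-1$: this is not visible from the closed-form expression \eqref{general_sol} but relies on the specific choice of antiderivative in \eqref{eq:gen_sol2}, which vanishes at $u_{n+2}=-1$ to exactly the order required to tame the factor $(1-u_{n+2}^2)^{-(n+1)/2}$. Once that regularity is in hand, what remains is a routine application of the Leibniz rule and the triangle inequality, together with a book-keeping argument about rotation-equivariance that uses nothing beyond what is already contained in Section~\ref{sec:pullback}.
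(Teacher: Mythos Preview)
Your proof is correct and follows essentially the same route as the paper: rotate to reduce to the standard $z=(0,\ldots,0,1)$, invoke Lemma~\ref{lem:technicalestimate}, and bound the derivatives of $\myf$ on the compact interval $[-1,D]$. Your discussion of the regularity of $\myf$ at $u_{n+2}=-1$ is more explicit than the paper's, which simply relies on the smoothness remark following Definition~\ref{def:whatiseta}.
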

\begin{proof}
Apply a linear orthogonal map of $\R^{n+2}$ that takes $z$ to $(0,0,\ldots,0,1)$. Let $x'$ and $y'$ be the images of $x$ and $y$,
respectively, under this map. We have $\|y'-x'\|=\|y-x\|$ and the condition
The condition $\langle\frac{y-x}{\|y-x\|},z\rangle<D$ becomes $\frac{y_{n+2}'-x_{n+2}'}{\|y-x\|}<D$.
We will use \eqref{eq:Hi}.  As $D<1$, on the interval $[-1,D]$ the function $\myf$ and its derivatives up to $m$-th inclusive are
bounded above by some constant $C_{D,m}$ depending on $D$ and $m$.
The constant $C^D_{n,m}$ can be chosen as $C^D_{n,m}=(m+1)C^{\lambda}_{D,m}C^{\#}_{n,m}$.
\end{proof}

\section{Properness of $\map$}\label{sec:propernessofS}

\begin{theorem}\label{thm:mapisproper}
For any $t\in(0,1)$ there exists $R_t$ such that $\map^{-1}(t)\subset B(0,R_t)$. In other words, all fibers of $\map$ except $\map^{-1}(0)$
are bounded.
\end{theorem}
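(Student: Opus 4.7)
The plan is to exploit the physical intuition that, from far away, a bounded submanifold subtends a vanishingly small solid angle; since $t\in(0,1)$ corresponds to a class in $\R/\Z$ bounded away from zero, this forces $\map^{-1}(t)$ to be bounded. The key tool is the integral formula \eqref{eq:defmap} together with the pointwise estimate of Lemma~\ref{lem:mainbound}, provided the auxiliary point $z\in S^{n+1}$ is chosen so that $\Sec_x(M)$ stays uniformly far from $z$.

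Fix $R>0$ with $M\subset B(0,R)$ and assume $\|x\|\ge 2R$. I would set $z=x/\|x\|\in S^{n+1}$. A direct computation shows that for every $y\in M$
\[
\left\langle \frac{y-x}{\|y-x\|},\,z\right\rangle \;=\; \frac{y\cdot x-\|x\|^{2}}{\|x\|\,\|y-x\|}\;\le\;-\frac{\|x\|-R}{\|x\|+R}\;\le\;-\frac{1}{3},
\]
so in particular $z\notin \Sec_x(M)$. By Proposition~\ref{prop:Seifertomits} and formula~\eqref{eq:defmap},
\[\map(x)\;=\;\frac{1}{\sigma_{n+1}}\int_M \Sec_x^*\eta_z \bmod 1.\]
Moreover, Lemma~\ref{lem:mainbound} applied with $D=0$ and $m=0$ provides a constant $C$, depending only on $n$, such that each coefficient of $\Sec_x^*\eta_z$ is bounded by $C\|y-x\|^{-n}$ for every $y\in M$. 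Since $\|y-x\|\ge\|x\|-R\ge\|x\|/2$, this yields
\[\left|\int_M \Sec_x^*\eta_z\right|\;\le\;\frac{C'\,\vol_n(M)}{\|x\|^n},\]
where $C'$ depends only on $n$.

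It remains to choose $R_t\ge 2R$ large enough that $C'\vol_n(M)/\|x\|^n<\sigma_{n+1}\min(t,1-t)$ whenever $\|x\|\ge R_t$. For such $x$, the class $\map(x)\in\R/\Z$ is represented by a real number whose absolute value is strictly less than $\min(t,1-t)$, hence cannot equal $t$. This gives $\map^{-1}(t)\subset B(0,R_t)$.

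The only delicate point is the selection $z=x/\|x\|$, which guarantees that the pole of $\eta_z$ (sitting at $z$ itself) is uniformly avoided by $\Sec_x(M)$; without this uniformity Lemma~\ref{lem:mainbound} would degenerate as $\|x\|\to\infty$. Once the uniform angular separation is secured, the lemma converts directly into a decay estimate of order $\|x\|^{-n}$ and the rest of the proof is routine.
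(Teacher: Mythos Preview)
Your argument is correct, but it follows a different path from the paper's own proof. The paper chooses a Seifert hypersurface $\Sigma\subset B(0,r)$ and uses the surface integral $\wt{\map}(x)=\frac{1}{\sigma_{n+1}}\int_\Sigma\Sec_x^*\omega_{n+1}$ together with the explicit pull-back formula~\eqref{eq:secondpullback}, whose coefficients are bounded by $\|y-x\|^{-n-1}$; since $\omega_{n+1}$ has no singularity on $S^{n+1}$, no choice of $z$ is needed and one immediately gets $|\wt{\map}(x)|\le C_\Sigma R^{-n-1}$ for $\|x\|>R+r$. You instead integrate over $M$ using~\eqref{eq:defmap}, which forces you to pick $z$ carefully so that the pole of $\eta_z$ is uniformly avoided; your choice $z=x/\|x\|$ is exactly the right one (and indeed reappears in the paper's later Separation Theorem~\ref{thm:aux2} for the ``far'' region). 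Your route has the virtue of working directly with $M$ and not invoking any Seifert hypersurface, at the cost of a slightly weaker decay rate ($\|x\|^{-n}$ versus $\|x\|^{-n-1}$), which is immaterial here.

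One small technical remark: Lemma~\ref{lem:mainbound} is stated only for $m>0$, so strictly speaking you cannot invoke it with $m=0$. The bound you need follows instead directly from the explicit formula~\eqref{eq:pullbackofeta} together with the boundedness of $\myf$ on $[-1,D]$ for $D<1$ (equivalently, from Lemma~\ref{lem:technicalestimate} with $m=0$). This is a cosmetic fix and does not affect the substance of your argument.
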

\begin{proof}
Choose a Seifert hypersurface $\Sigma$ for $M$. We may assume that it is contained in a ball $B(0,r)$ for some $r>0$. As $\Sigma$ is compact
and smooth, there exists a constant $C_\Sigma$ such that if an $(n+1)$--form $\omega_{n+1}$ on $\R^{n+2}$ has all the
coefficients bounded from above by $T$, then
$|\int_\Sigma\omega_{n+1}|<C_\Sigma T$.

Now take $R\gg 0$ and suppose $x\notin B(0,R+r)$. Then the distance of $x$ to any point $y\in\Sigma$ is at least $R$. Then $\Sec_x^*\omega_{n+1}$
has all the coefficients bounded by $R^{-n-1}$, see \eqref{eq:secondpullback}, and
therefore $|\int_\Sigma\Sec_x^*\omega_{n+1}|\le C_\Sigma R^{-1-n}$. This means that
\[\map(\R^{n+2}\setminus B(0,R+r))\subset(-C_\Sigma R^{-1-n},C_\Sigma R^{-1-n}),\]
or equivalently, that if $t\notin (-C_\Sigma R^{-1-n},C_\Sigma R^{-1-n})$, then $\nobreak{\map^{-1}(t)\subset B(0,R+r)}$.
\end{proof}

\begin{corollary}\label{cor:extendstosphere}
The map $\map$ extends to a $C^{n+1}$ smooth map from $S^{n+2}\setminus M$ to $S^1$.
\end{corollary}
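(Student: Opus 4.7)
My plan is to extend $\map$ to $S^{n+2}\setminus M$ by declaring $\map(\infty):=0$ and then check the required regularity in an inversion chart at infinity. Continuity at $\infty$ is a direct consequence of Theorem~\ref{thm:mapisproper}: its proof in fact shows that $|\map(x)|\le C\|x\|^{-(n+1)}$ for $\|x\|$ large enough, so $\map(x)\to 0$ as $\|x\|\to\infty$ and the extension by $0$ is continuous.

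To analyse $\map$ near $\infty$, I would introduce the inversion chart $\iota(u):=u/\|u\|^2$ with $\iota(0)=\infty$ and study $\wt\map(u):=\map(\iota(u))$ near $u=0$. After translating so that $0\notin M$, the preimage $\iota^{-1}(M)$ is bounded away from the origin; consequently, away from $u=0$ the composite $\wt\map$ is smooth by Proposition~\ref{prop:smooth} and smoothness of $\iota$ on $\R^{n+2}\setminus\{0\}$. Everything thus reduces to understanding $\wt\map$ at $u=0$.

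The main technical input is a decay estimate for the $x$-derivatives of $\map$. Starting from the Seifert hypersurface formula $\map(x)=\sigma_{n+1}^{-1}\int_\Sigma\Sec_x^*\omega_{n+1}$ with a fixed $\Sigma\subset B(0,R)$, differentiation under the integral is legitimate as soon as $\|x\|>R$. An induction in the spirit of Lemma~\ref{lem:technicalestimate}, but applied to the explicit formula \eqref{eq:secondpullback}, shows that the coefficients of $\partial_x^{\alpha}(\Sec_x^*\omega_{n+1})$ are bounded by $C_{|\alpha|}\|y-x\|^{-(n+1)-|\alpha|}$ uniformly in $y\in\Sigma$. Together with the trivial bound $\|y-x\|\ge\|x\|-R$ for $y\in\Sigma$ this yields
\[
|\partial_x^{\alpha}\map(x)|\le C'_{|\alpha|}\,\|x\|^{-(n+1)-|\alpha|}\qquad\text{for }\|x\|\ge 2R.
\]

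Finally, transferring to the $u$-chart via the chain rule, using $\|\iota(u)\|=\|u\|^{-1}$ together with $|\partial_u^{\beta}\iota(u)|\le C_{\beta}\|u\|^{-1-|\beta|}$, a Fa\`a di Bruno computation produces
\[
|\partial_u^{\gamma}\wt\map(u)|\le C''_{|\gamma|}\,\|u\|^{(n+1)-|\gamma|}\qquad\text{for }|\gamma|\le n+1.
\]
For $|\gamma|\le n$ the right-hand side tends to $0$ as $u\to 0$, so the corresponding partial derivatives of $\wt\map$ extend continuously to the origin with value $0$. The delicate case, and the main obstacle, is $|\gamma|=n+1$, where the crude estimate gives only boundedness; continuity of these top-order derivatives at $u=0$ would be secured by singling out the $\|x\|^{-(n+1)}$ leading term in the asymptotic expansion of $\Sec_x^*\omega_{n+1}$ in powers of $\|x\|^{-1}$ and verifying that its pullback through $\iota$ admits a $C^{n+1}$ extension to $u=0$.
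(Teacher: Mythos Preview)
Your approach mirrors the paper's sketch exactly: pass to the inversion chart, bound the $x$-derivatives of $\map$ by differentiating \eqref{eq:secondpullback} under the integral over a fixed $\Sigma$, bound the derivatives of the inversion, and combine via Fa\`a di Bruno. On the key step you are in fact more careful than the paper. The paper claims the composite bound $C\|w\|^{n+2-|\alpha|}$ and concludes that all derivatives of order $\le n+1$ tend to zero; but its own ``worst case'' (differentiate $\map$ once, differentiate $w/\|w\|^2$ all $|\alpha|$ times) gives $\|w\|^{n+2}\cdot\|w\|^{-1-|\alpha|}=\|w\|^{n+1-|\alpha|}$, which is exactly the exponent you obtain. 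Your observation that the case $|\gamma|=n+1$ yields only boundedness is thus correct, and the paper's sketch appears to hide this behind an off-by-one slip.

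The gap you flag is therefore genuine. Your proposed remedy---extract the leading $\|x\|^{-(n+1)}$ term of $\map$ at infinity and verify that its pullback through $\iota$ is $C^{n+1}$ at $0$---is a natural next step, but beware: that leading term comes out as $\sigma_{n+1}^{-1}\langle a,x\rangle/\|x\|^{n+2}$ with $a_i=(-1)^{i}\int_\Sigma dy_1\wedge\cdots\wedge\widehat{dy_i}\wedge\cdots\wedge dy_{n+2}$, and its pullback is proportional to $\langle a,u\rangle\|u\|^n$. For $n$ even this is a polynomial and all is well; for $n$ odd it is homogeneous of degree $n+1$ but generically only $C^n$ at the origin (already for $n=1$ the function $u_j\|u\|$ has discontinuous second partials at $0$). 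So either a further cancellation is at work, or the stated regularity $C^{n+1}$ is one notch too optimistic; a one-term expansion alone will not decide it.
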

\begin{proof}[Sketch of proof]
Smoothness of $\map$ at infinity is equivalent to the smoothness of $w\mapsto \map(\frac{w}{\|w\|^2})$ at $w=0$. The proof of
Theorem~\ref{thm:mapisproper} generalizes to show that for any $m>0$ there exists $C_m$ with a property that
$|D^\alpha\map(x)|\le C_m\cdot\|x\|^{-n-1-|\alpha|}$, and $\|D^\alpha\frac{w}{\|w\|^2}\|\le C_m\|w\|^{-|\alpha|-1}$ whenever $|\alpha|\le m$. Here
$\alpha$ is a multi-index.

Now by the di Bruno's formula for higher derivatives of the composite function, we infer that
$|D^\alpha \map(\frac{w}{\|w\|^2})|\le C\|w\|^{n+2-|\alpha|}$ (the worst case occurs when $\map$ is differentiated only once, while
$\dfrac{w}{\|w\|^2}$ is differentiated $|\alpha|$ times). Hence, the limit at $w\to 0$ of all derivatives of $w\mapsto \map(\dfrac{w}{\|w\|^2})$ of order up to
$n+1$ is zero.
\end{proof}

We can also strengthen the argument of Theorem~\ref{thm:mapisproper} to obtain a more detailed information about the behavior of $\map$ at a large scale.
\begin{theorem}\label{thm:euler}
Suppose $\Sigma$ is a Seifert hypersurface and $r$ is such that $\nobreak{\Sigma\subset B(0,r)}$. For any $R>r$, if $\|x\|>R$ we have
\[\left\vert\sum_{i=1}^{n+2}x_i\frac{\partial\map}{\partial x_i}+(n+1)\map\right\vert\le C_\Sigma(n+2)\frac{rR^{n+2}}{(R-r)^{n+2}}\|x\|^{-(n+2)},\]
where $C_\Sigma$ depends solely on $\Sigma$ and not on $R$ and $r$.
\end{theorem}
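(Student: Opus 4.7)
The approach is to differentiate the integral representation of $\map$ under the integral sign and exploit a hidden homogeneity of $\Sec_x^*\omega_{n+1}$ that causes the operator $\mathcal{L} := \sum_{i=1}^{n+2} x_i\,\partial/\partial x_i + (n+1)$ to annihilate the leading asymptotic term. The starting point is the definition $\map(x) = \frac{1}{\sigma_{n+1}}\int_\Sigma \Sec_x^*\omega_{n+1}$ for an arbitrary Seifert hypersurface $\Sigma \subset B(0,r)$. Since $\|x\| > R > r$, the point $x$ lies off $\Sigma$ and the integrand is jointly smooth in $(x,y)$, so $\mathcal{L}$ commutes with the integral. It thus suffices to bound each coefficient of $\mathcal{L}[\Sec_x^*\omega_{n+1}]$ pointwise on $\Sigma$ and then invoke the constant $C_\Sigma$ from the proof of Theorem~\ref{thm:mapisproper}.

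By \eqref{eq:secondpullback}, the coefficient of $dy_1\wedge\cdots\widehat{dy_i}\cdots\wedge dy_{n+2}$ is
\[h_i(x,y) := (-1)^{i+1}\frac{y_i - x_i}{\|y-x\|^{n+2}}.\]
Setting $w = x - y$, we have $h_i = (-1)^i w_i/\|w\|^{n+2}$, which is homogeneous of degree $-(n+1)$ in $w$. Because $\partial/\partial x_j$ coincides with $\partial/\partial w_j$ on any function of $w$, Euler's identity yields $\sum_j w_j\,\partial h_i/\partial x_j = -(n+1) h_i$. Substituting $x_j = w_j + y_j$ produces the key identity
\[\mathcal{L} h_i(x,y) = \sum_{j=1}^{n+2} y_j\,\frac{\partial h_i}{\partial x_j}(x,y),\]
which manifestly vanishes at $y = 0$ (reflecting the annihilation of the leading $-(n+1)$--homogeneous part) and reduces the whole estimate to a bound on $\nabla_x h_i$.

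A direct computation gives $\partial h_i/\partial x_j = (-1)^i\bigl[\delta_{ij}\|w\|^{-(n+2)} - (n+2)\,w_i w_j\|w\|^{-(n+4)}\bigr]$, from which one finds $\|\nabla_x h_i\|^2 \le (n+1)^2/\|w\|^{2(n+2)}$. Cauchy--Schwarz then yields $|\mathcal{L} h_i| \le (n+2)\|y\|/\|w\|^{n+2}$. For $y\in\Sigma$ and $\|x\|>R$ we have $\|y\|\le r$ and $\|w\|\ge \|x\|-r \ge \|x\|(R-r)/R$, hence $\|w\|^{-(n+2)}\le R^{n+2}/\bigl((R-r)^{n+2}\|x\|^{n+2}\bigr)$. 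Every coefficient of $\mathcal{L}[\Sec_x^*\omega_{n+1}]$ is therefore pointwise bounded by
\[(n+2)\,\frac{rR^{n+2}}{(R-r)^{n+2}\|x\|^{n+2}},\]
and the defining property of $C_\Sigma$ (together with the usual absorption of $1/\sigma_{n+1}$ into the constant, exactly as in the proof of Theorem~\ref{thm:mapisproper}) delivers the claimed inequality.

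The main conceptual obstacle is identifying the correct form of the homogeneity: the coefficients $h_i$ are not homogeneous as functions of $x$ alone, only jointly in $w = x - y$; the natural bookkeeping therefore works with $w$ and reads off the remainder from the substitution $x = w + y$. Once this is done and the identity $\mathcal{L} h_i = (y\cdot\nabla_x)h_i$ is exposed, all remaining estimates are dimension-count and Cauchy--Schwarz, and the desired decay of $\mathcal{L}\map$ follows by integrating over the compactly supported $\Sigma$.
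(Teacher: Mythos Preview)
Your proof is correct and follows the same strategy as the paper: differentiate the kernel of \eqref{eq:secondpullback} under the integral, exploit the degree $-(n+1)$ homogeneity in $w=x-y$ to cancel the leading term, and bound the $O(\|y\|)$ remainder by $\|y\|/\|y-x\|^{n+2}$ before invoking $C_\Sigma$. Your packaging via Euler's identity, yielding the clean formula $\mathcal{L}h_i=(y\cdot\nabla_x)h_i$, is slightly more transparent than the paper's explicit decomposition into auxiliary forms $\xi_1,\xi_2,\xi_3$, but the underlying computation and estimates are the same.
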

\begin{proof}
Using \eqref{eq:secondpullback} write
\begin{multline*}
\frac{\partial\Sec_x^*\omega_{n+1}}{\partial x_i}=\frac{-1}{\|y-x\|^{n+2}} (-1)^{i+1} dy_1\wedge\ldots\wedge\widehat{dy_i}\wedge\ldots\wedge dy_{n+2}+\\
+(n+2)\frac{y_i-x_i}{\|y-x\|^{n+4}}\sum_{j=1}^{n+2} (-1)^{j+1}(y_j-x_j)dy_1\wedge\ldots\wedge\widehat{dy_j}\wedge\ldots\wedge dy_{n+2}.
\end{multline*}
This implies that
\[\sum_{i=1}^{n+2}x_i\frac{\partial\Sec_x^*\omega_{n+1}}{\partial x_i}=-\xi_1+(n+2)\xi_2,\]
where
\begin{align*}
\xi_1&=\frac{1}{\|y-x\|^{n+2}}\sum_{i=1}^{n+2} (-1)^{i+1}x_idy_1\wedge\ldots\wedge\widehat{dy_i}\wedge\ldots\wedge dy_{n+2}\\
\intertext{and}
\xi_2&=\frac{1}{\|y-x\|^{n+4}}\sum_{i=1}^{n+2}\sum_{j=1}^{n+2} (-1)^{j+1} (y_i-x_i)(y_j-x_j)x_i dy_1\wedge\ldots\wedge\widehat{dy_j}\wedge\ldots\wedge dy_{n+2}.\\
\intertext{Write also}
\xi_3&=\frac{1}{\|y-x\|^{n+2}}\sum_{i=1}^{n+2}(-1)^{i+1} (y_i-x_i)dy_1\wedge\ldots\wedge\widehat{dy_i}\wedge\ldots\wedge dy_{n+2}.
\end{align*}
Now suppose $\|x\|>R$ and $\|y\|<r$.
Then $-\xi_1-\xi_3$ has all the coefficients bounded by $\frac{rR^{n+2}}{(R-r)^{n+2}}\|x\|^{-n-2}$. Likewise notice that
\[\left|\sum_{i=1}^{n+2}(y_i-x_i)x_i+\|y-x\|^2\right|=\left|\sum_{i=1}^{n+2}(y_i-x_i)y_i\right|\le \|y\| \|y-x\|,\]
where we used Schwarz' inequality in the last estimate.
Therefore $-\xi_2-\xi_3$ has all the coefficients bounded by $\frac{\|y\|}{\|y-x\|^{n+2}}$, and 
by assumptions on $\|x\|$ and $\|y\|$ we have that
\[\frac{\|y\|}{\|y-x\|^{n+2}}\le\frac{rR^{n+2}}{(R-r)^{n+2}}\|x\|^{-n-2}.\]
We conclude that
\begin{equation}\label{eq:euler2}
\left|\int_\Sigma -\xi_1+(n+2)\xi_2+(n+1)\xi_3\right|\le C_\Sigma(n+2)\frac{rR^{n+2}}{(R-r)^{n+2}} \|x\|^{-(n+2)}.
\end{equation}
As $\map(x)=\int_\Sigma \xi_3$, we obtain the statement.
\end{proof}
The statement of Theorem~\ref{thm:euler}, in theory, can be used to obtain information about $C_\Sigma$ from the behavior of $\map$
at infinity. The left hand side of \eqref{eq:euler2} is equal to $\left|\sum\limits_{i=1}^{n+2} x_i\frac{\partial\map}{\partial x_i}+(n+1)\map\right|$,
and does not depend on $\Sigma$. Therefore if we know $\map$ and its derivatives, we can find a lower bound for $C_\Sigma$, which
roughly tells, how complicated $\Sigma$ might be. Unfortunately we do not know of any examples where this can be used effectively.

\section{$\map$ for an $n$-dimensional linear surface}\label{sec:linear}
Let $M\subset\R^{n+2}$ be given by $\{w\in\R^{n+2}\colon w_1=0,w_2=0\}$, the set of points having the first two coordinates zero. We wish to calculate the map $\map$ for $M$. We encounter some technical problems.
Firstly, as $M$ is not compact, we have no reason to expect that $\map$ has bounded fibers and indeed, the statement of Theorem~\ref{thm:mapisproper}
does not hold. Secondly, there is a more serious problem. The map $\map$ will depend on the choice of the ``Seifert hypersurface''. We used
quotation marks in the previous sentence because $M$, as it is not compact, does not admit a compact Seifert hypersurface. However, if we choose a
Seifert hypersurface for $M$ to be an $(n+1)$--dimensional half-space, it turns
out that the derivative of $\map$ does not depend on the choice of the half-space. This feature and calculations for
$\frac{\partial\map}{\partial x_j}$ will be important in Section~\ref{sec:behavior}.

Set $\Sigma=\{w:w_1\le 0,\ w_2=0\} \subset\mathbb R^{n+2}$.
For any point $x\notin\Sigma$, the value of the map $\map(x)$
is (up to a sign) the area of the image $\Sec_x(\Sigma)$. This image can be calculated explicitly.

Choose a point $y=(y_1,\ldots,y_{n+2})\in S^{n+1}$. The half-line from $x\notin\Sigma$ through $x+y$ is given by $t\mapsto x+ty$, $t\ge 0$; see
Figure~\ref{fig:halfline}.
By definition, $y\in\Sec_x(\Sigma)$ if and only if this half-line intersects $\Sigma$, that is, for some $t_0>0$
we have
\begin{equation}\label{eq:newdisplayed}
x_2+t_0y_2=0\textrm{ and }x_1+t_0y_1\le 0.
\end{equation}
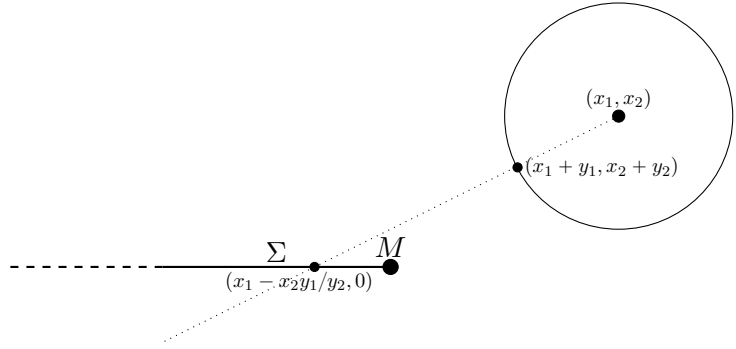
\begin{figure}
\begin{tikzpicture}
\draw[thick, dashed](-5,0) -- (-3,0);
\draw[thick] (-3,0) -- (0,0);
\draw[fill=black] (0,0) circle (0.1cm)  node[above] {$M$};
\draw (-1.5,0.2) node {$\Sigma$};
\draw (3,2) circle (1.5cm);
\draw[fill=black] (3,2) circle (0.08cm) node [above, scale=0.7] {$(x_1,x_2)$};
\draw[dotted] (3,2) -- (-3,-1);
\draw[fill=black] (-1,0) circle (0.06cm);
\draw (-1.2,-0.2)  node [ scale=0.7] {$(x_1-x_2y_1/y_2,0)$};
\draw[fill=black] (1.67,1.32) circle (0.06cm) node [right,scale=0.7] {$(x_1+y_1,x_2+y_2)$};
\end{tikzpicture}
\caption{The half-line from $(x_1,x_2)$ through $(x_1+y_1,x_2+y_2)$ hitting the Seifert hypersurface $\Sigma$.}\label{fig:halfline}
\end{figure}
Note that if $x_2= 0$, then the half-line through $x$ and any point in $\Sigma$ will meet $M$, 
which results in an $n$-dimensional image $\Sec_x(\Sigma)$ in $S^{n+1}$.
Suppose $x_2\neq 0$.
The condition $t_0>0$ together with \eqref{eq:newdisplayed} implies that the signs of $x_2$ and $y_2$ must be opposite.
Plugging $t_0$ from the first equation of \eqref{eq:newdisplayed} into the second one, we obtain
\begin{equation}\label{eq:mainestimate}
x_1-\frac{x_2y_1}{y_2}\le 0.
\end{equation}
The calculation of $\map$ boils down to the study of the set of $x_1,x_2$ satisfying \eqref{eq:mainestimate}.
Write $x_1=r\cos2\pi\beta$ and $x_2=r\sin2\pi\beta$.
Multiply \eqref{eq:mainestimate} by $\frac{y_2}{x_2}$ (which is negative) to obtain the inequality
\[y_1\le y_2/\tan 2\pi\beta.\]
 There are four cases depending on in which quadrant of the plane contains $(x_1,x_2)$, see Figure~\ref{fig:fourcases}.
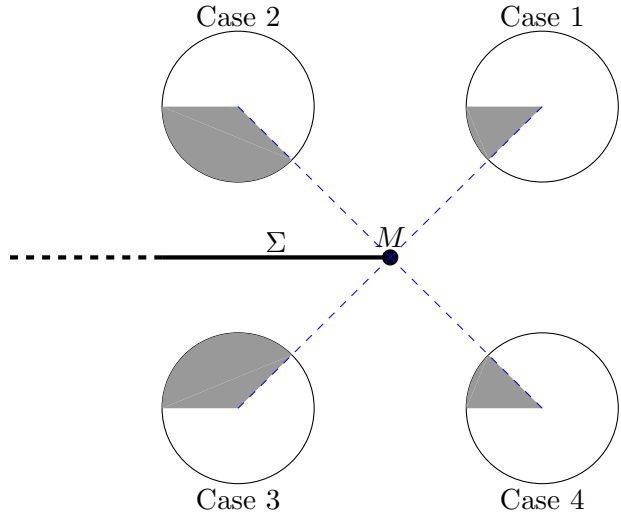
\begin{figure}
\begin{tikzpicture}
\draw[ultra thick, dashed](-5,0) -- (-3,0);
\draw[ultra thick] (-3,0) -- (0,0);
\draw[fill=black] (0,0) circle (0.1cm)  node[above] {$M$};
\draw (-1.5,0.2) node {$\Sigma$};
\draw (2,2) circle (1cm);
\draw[fill=black!40,draw=none] (1,2) arc [start angle=180, end angle=225, radius=1cm];
\draw[fill=black!40,draw=none] (2,2) -- ++(225:1cm) -- (1,2) -- cycle;
\draw[blue, dashed] (2,2) -- (0,0);
\draw(2,3.2) node {Case 1};
\draw (-2,2) circle (1cm);
\draw[fill=black!40,draw=none] (-3,2) arc [start angle=180, end angle=315, radius=1cm];
\draw[fill=black!40,draw=none] (-2,2) -- ++ (315:1cm) -- (-3,2) -- cycle;
\draw[blue, dashed] (-2,2) -- (0,0);
\draw(-2,3.2) node {Case 2};
\draw (-2,-2) circle (1cm);
\draw[fill=black!40,draw=none] (-3,-2) arc[start angle=180, end angle=45, radius=1cm];
\draw[fill=black!40,draw=none] (-2,-2) -- ++ (45:1cm) -- (-3,-2) -- cycle;
\draw[blue, dashed] (-2,-2) -- (0,0);
\draw(-2,-3.2) node {Case 3};
\draw (2,-2) circle (1cm);
\draw[fill=black!40,draw=none] (1,-2) arc [start angle=180, end angle=135, radius=1cm];
\draw[fill=black!40,draw=none] (2,-2) -- ++ (135:1cm) -- (1,-2) -- cycle;
\draw[blue, dashed] (2,-2) -- (0,0);
\draw(2,-3.2) node {Case 4};
\end{tikzpicture}
\caption{The four cases of possible position of points $(x_1,x_2)$ and the image of the projection. Instead of drawing the preimage
in the boundary on a circle, we draw a circular sector in a disk for better readability.}\label{fig:fourcases}
\end{figure}
We next calculate the area of the image $\Sec_x(\Sigma)$ of each of those four cases.
To do so, we first deal with the calculations and then discuss the choice of the sign. For the moment, we choose a sign for the area as
$\epsilon\in\{-1,+1\}$; refer to Section~\ref{sec:sign} for the discussion of the sign convention.

Notice that the area of the two--dimensional
circular sector in Figure~\ref{fig:fourcases} is (up to normalization) equal to the $(n+1)$--dimensional
area of the image $\Sec_x(\Sigma)$. This is because the defining equations are homogeneous, and other variables $y_3,\ldots,y_{n+2}$
do not enter in the definition of the region. 

\begin{itemize}
\item[\textbf{Case 1}] \underline{$x_1\ge 0$ and $x_2>0$.} The region $\Sec_x(\Sigma)$ is given by $y_2 < 0$ (because the sign of $y_2$ is opposite to the sign of $x_2$),
$y_1\le y_2/\tan2\pi\beta$ and $\tan2\pi\beta\in(0,\infty)$.
The area of the sector corresponding to Case~1 is equal to $\pi\beta$, hence $\map(x)=\epsilon\beta$, where $\epsilon$ is a sign.
\item[\textbf{Case 2}] \underline{$x_1\le 0$ and $x_2 > 0$.} The region $\Sec_x(\Sigma)$ is given by $y_2 < 0$, $y_1\le y_2/\tan2\pi\beta$, where
$\tan2\pi\beta\in(-\infty,0)$. The area of the sector is equal to $\pi\beta$ and so $\map(x)=\epsilon\beta$.
\item[\textbf{Case 3}] \underline{$x_1\le 0$ and $x_2 < 0$.} Then $y_2 > 0$ and $\tan2\pi\beta\in(0,\infty)$. The area of the sector is $\pi-\pi\beta$,
but now the hypersurface $\Sigma$ is seen from the other side, hence the signed area is $\epsilon(\pi\beta-\pi)$. After normalizing and
taking modulo $1$, we obtain that $\map(x)=\epsilon\beta$.
\item[\textbf{Case 4}] \underline{$x_1\ge 0$, $x_2 < 0$.} Then $y_2 > 0$ and $\tan2\pi\beta\in(-\infty,0)$. As in Case 3, we deduce that
the area is $\pi-\pi\beta$ and we obtain $\map(x)=\epsilon\beta$.
\end{itemize}

Putting all the cases together, we see that $\map(x)=\epsilon\beta$.

Suppose we take another `Seifert surface' for $M$, denoted $\Sigma'$, given by $u_1=0$, $u_2\le 0$.
Let $\map'$ be the map $\map$ defined relatively to $\Sigma'$. To calculate $\map'$,
we could repeat the above procedure, yet we present a quicker argument. A counterclockwise rotation $A$ in the $(u_1,u_2)$-plane by angle $\frac{\pi}{2}$
fixes $M$ and takes $\Sigma$ to $\Sigma'$.
In particular $\map'(x)=\map(Ax)$. Hence $\map'(x)=\epsilon(\beta-\frac14)$
We notice that $\map'\neq\map$, but on the other hand $\map'-\map$ is a constant. This approach shows that
if we take a linear hypersurface (a half-space) for the `Seifert surface' of $\map$, then it is well defined up to a constant, and so the
derivatives are well defined.

\subsection{The sign convention}\label{sec:sign}
Given that $\map$ is defined as an integral of a differential form, changing the orientation of $M$ induces a reversal of the sign of $\map$.
We use the example of a linear surface to show how the sign is computed.

Choose an orientation of $M$ in such a way that
$\frac{\partial}{\partial u_3},\ldots,\frac{\partial}{\partial u_{n+2}}$ is a positive basis of $TM$. Stokes' theorem
is applicable if $\Sigma$ is oriented by the rule
``normal outwards first'', see \cite[Chapter 5]{Spivak},
so that $\frac{\partial}{\partial u_1},\frac{\partial}{\partial u_3},\ldots,\frac{\partial}{\partial u_{n+2}}$ is an oriented basis of $T\Sigma$.

The way of seeing the sign is by calculating $\int_\Sigma\Sec_x^*\omega_{n+1}$.
By \eqref{eq:secondpullback} we know that
\[\wt{\map}(x_1,\ldots,x_{n+2})=\int_\Sigma\Sec_x^*\omega_{n+1}=\int_{\substack{y_1\le 0\\ y_2=0}}\frac{x_2}{\|y-x\|^{n+2}} dy_1\wedge dy_3\wedge\ldots \wedge dy_{n+2}.\]
Given the orientation of $\Sigma$ we have
\begin{equation}\label{eq:check_sign}
\begin{split}
\int_{y_1\le 0,y_2=0}&\frac{x_2}{\|y-x\|^{n+2}} dy_1\wedge dy_3\wedge\ldots \wedge dy_{n+2}=\\
=&x_2\int_{-\infty}^0\left(\int\frac{1}{\|y-x\|^{n+2}} dy_3\ldots dy_{n+2}\right)dy_1.
\end{split}
\end{equation}
Notice that on the left hand side we have an integral of a differential form, whereas on the right hand side the integral is with respect
to the $(n+1)$--dimensional Lebesgue measure on a subset of $\R^{n+1}$.

The function $\int\frac{1}{\|y-x\|^2} dy_3\ldots dy_{n+2}$ is positive, therefore $\wt{\map}$ is positive for $x_2>0$,
negative for $x_2<0$ and $0$ for $x_2=0,x_1>0$ (notice that \eqref{eq:check_sign} is not defined if $x_2=0$ and $x_1\le 0$: if this holds,
the point $(x_1,x_2,\ldots,x_{n+2})$ lies on $\Sigma$ and the integral diverges). Therefore $\frac{\partial}{\partial x_2}\wt{\map}|_{x_2=0,x_1>0}$
is non-negative. This is possible only if the choice of sign is $\epsilon=+1$.

\section{$\map$ for a circle}\label{sec:circle}
We now use the formula for $\map$ via the integrals of the pull-back of $\eta$, see \eqref{eq:pullbackofeta}, to give an explicit formula for $\map$ in the
case when $M$ is a circle. The output is given in terms of elliptic integrals. Detailed calculations can be found e.g. in \cite{Dee},
therefore we omit some tedious computations. We focus on the analysis of the behavior of $\map$ near the circle.

\subsection{Elliptic Integrals}
For the reader's convenience we give a quick review of elliptic integrals and their properties. We shall use these definitions in future calculations.
This section is based on \cite{Byrd}.

\begin{definition}
Let $\varphi\in [0,\pi/2]$. For any $k\in [0,1]$, the \emph{complementary modulus} $k'$ of $k$ is defined by $k'=\sqrt{1-k^2}$.
\begin{enumerate}
\item The integral
\begin{equation}\label{eq:defofF}
\ellF(\varphi,k) = \int_0^{\varphi} \dfrac{dt}{\sqrt{1-k^2\sin^2 t}}
\end{equation}
is called an \emph{elliptic integral of the first kind}. If $\varphi = \pi/2$, it is called a \emph{complete elliptic integral of the first kind}, denoted by $\ellK(k):= \ellF(\pi/2,k)$.

\item The integral
\[\ellE(\varphi,k) = \int_0^{\varphi} \sqrt{1-k^2\sin^2 t}~dt\]
is called an \emph{elliptic integral of the second kind}. If $\varphi = \pi/2$, it is called a \emph{complete elliptic integral of the second kind}, denoted by $\nobreak{\ellE(k):= \ellE(\pi/2,k)}$.
\item The integral
\[\ellPi(\varphi,\alpha^2,k) = \int_0^{\varphi} \dfrac{dt}{(1-\alpha^2\sin^2 t)\sqrt{1-k^2\sin^2 t}}\]
is called an \emph{elliptic integral of the third kind}. If $\varphi = \pi/2$, it is called a \emph{complete elliptic integral of the third kind}, denoted by $\ellPi(\alpha^2,k):= \ellPi(\pi/2,\alpha^2,k)$.
\item Heuman's Lambda function $\ellL_0 (\beta,k)$ can be defined by the formula
\[\ellL_0 (\beta,k) = \dfrac{2}{\pi}\left(\ellE(k)\ellF(\beta,k') + \ellK(k)\ellE(\beta,k') - \ellK(k)\ellF(\beta,k')\right).\]
\end{enumerate}
\end{definition}

\smallskip
%

Although $ \ellK(k)$ blows up at $k=1$,
we know how fast it goes to infinity as $k$ approaches 1 from below.

\begin{proposition}[see \expandafter{\cite[formula (10) on page 318]{Highertrans}}]\label{K(k)_growth} We have
$$ \ellK(k) = \ln \dfrac{4}{\sqrt{1-k^2}} + O\left((1-k^2)\ln\sqrt{1-k^2}\right) \quad \textup{as} \quad k\to 1^-.$$
In particular
\begin{equation}\label{eq:limKk}
\lim_{k \to 1^-} \left( \ellK(k) - \ln \dfrac{4}{\sqrt{1-k^2}}\right) = 0.
\end{equation}
\end{proposition}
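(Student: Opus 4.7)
The plan is to reduce $\ellK(k)$ to one elementary integral carrying the logarithmic divergence, plus a remainder whose limit can be computed exactly. First I would set $k':=\sqrt{1-k^2}$ and substitute $w=\cos t$ in \eqref{eq:defofF}. Using $1-k^2\sin^2 t = k'^2+k^2w^2$, this gives
$$\ellK(k) = \int_0^1 \frac{dw}{\sqrt{1-w^2}\sqrt{k'^2+k^2w^2}},$$
so that the singular point as $k'\to 0$ is now at $w=0$. I would then decompose the integrand as $1/\sqrt{k'^2+k^2w^2} + (1/\sqrt{1-w^2}-1)/\sqrt{k'^2+k^2w^2}$ and call the two resulting integrals $I_1(k)$ and $I_2(k)$.

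The integral $I_1$ is elementary: the substitution $kw=k'\sinh\phi$ gives $I_1(k) = k^{-1}\operatorname{arcsinh}(k/k') = k^{-1}\ln((1+k)/k')$ in view of $k^2+k'^2=1$. Expanding $1/k = 1 + \tfrac12 k'^2+O(k'^4)$ and $\ln((1+k)/2) = O(k'^2)$ gives $I_1(k) = \ln(2/k') + O(k'^2\ln k')$. For $I_2$ I would exploit that the factor $(1-\sqrt{1-w^2})/\sqrt{1-w^2}$ is $O(w^2)$ near $w=0$ and integrable at $w=1$, so dominated convergence identifies
$$\lim_{k'\to 0} I_2(k) = \int_0^1 \frac{1-\sqrt{1-w^2}}{w\sqrt{1-w^2}}\,dw;$$
the substitution $w=\sin\theta$ turns this into $\int_0^{\pi/2}\tan(\theta/2)\,d\theta = \ln 2$. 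Adding the contributions already gives the main term $\ln(2/k') + \ln 2 = \ln(4/k')$, and therefore the qualitative limit \eqref{eq:limKk}.

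To upgrade this to the quantitative bound $O((1-k^2)\ln\sqrt{1-k^2})$, I would use the identity
$$\frac{1}{\sqrt{k'^2+k^2w^2}} - \frac{1}{kw} = \frac{-k'^2}{kw\sqrt{k'^2+k^2w^2}\,(kw+\sqrt{k'^2+k^2w^2})},$$
which exhibits an explicit $k'^2$ prefactor in $I_2(k) - (\ln 2)/k$. Splitting the remaining integral into the regimes $w\in[0,k']$, $w\in[k',1/2]$, $w\in[1/2,1]$ shows that the middle range contributes $\sim k'^2\ln(1/k')$ while the two outer ranges contribute $O(k'^2)$, yielding $I_2(k) = \ln 2 + O(k'^2\ln k')$. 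Combined with the same order error in $I_1(k)$, this gives the full statement. The main obstacle is precisely this last step: tracking the integrand of $I_2 - (\ln 2)/k$ across the three regimes carefully enough that the correct logarithmic factor emerges with the overall $k'^2$ prefactor without overshooting to $O(k'^2(\ln k')^2)$.
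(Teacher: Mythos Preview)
Your argument is correct and self-contained. Note, however, that the paper does not actually prove this proposition: it is quoted directly from \cite[formula~(10) on page~318]{Highertrans} as a classical fact about $\ellK(k)$, and only the consequence \eqref{eq:limKk} is used downstream (in \eqref{eq:linearr} and Corollary~\ref{cor:clin}). So there is no ``paper's own proof'' to compare against.

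That said, your route is a clean direct derivation. The substitution $w=\cos t$ localizes the singularity at $w=0$, and the split $I_1+I_2$ cleanly separates the elementary logarithmic piece from a bounded remainder whose limit you identify by dominated convergence and a half-angle computation. This already yields \eqref{eq:limKk}, which is all the paper ever invokes. Your refinement for the quantitative $O\bigl((1-k^2)\ln\sqrt{1-k^2}\bigr)$ error via the algebraic identity for $1/\sqrt{k'^2+k^2w^2}-1/(kw)$ and the three-regime split $[0,k']\cup[k',1/2]\cup[1/2,1]$ is also sound: the factor $k'^2$ is manifest, the outer regimes contribute $O(1)$ before the prefactor, and only the middle regime produces the single $\ln(1/k')$ from $\int_{k'}^{1/2} dw/w$, so there is no danger of picking up an extra logarithm. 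The self-flagged ``main obstacle'' is thus not really an obstacle; your sketch already contains the full mechanism.
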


The differentials of $\ellK(k)$ and $\ellE(k)$ are calculated e.g. in \cite[page 282]{Byrd}.
\begin{align}  \label{dK(k)/dk}
\dfrac{d}{dk}\ellK(k) = \dfrac{\ellE(k) - (k')^2 \ellK(k)}{k(k')^2}
\end{align}
and
\begin{align} \label{dE/dk}
\dfrac{d}{dk}\ellE(k) = \dfrac{\ellE(k) - \ellK(k)}{k}
\end{align}
where $k' = \sqrt{1-k^2}$.
The derivative of
the Heuman's Lambda function $\ellL_0(\beta,k)$ is given by the following formula, see \cite[formulae 710.11 and 730.04]{Byrd}.
\begin{align} \label{dHeuman/dk}
\dfrac{\partial}{\partial k}\ellL_0(\beta,k)& = \dfrac{2(\ellE(k)-\ellK(k))\sin\beta\cos\beta}{\pi k\sqrt{1-k'^2\sin^2 \beta}}\\
\intertext{and}
\label{dHeuman/d(angle)}
\dfrac{\partial}{\partial \beta}\ellL_0(\beta,k)& = \dfrac{2(\ellE(k)-k'^2\sin^2 \beta\, \ellK(k))}{\pi \sqrt{1-k'^2\sin^2 \beta}}.
\end{align}
\subsection{Computation of $\map$ for the circle}\label{sec:computations}
In this section, we follow closely \cite[Sections 6.2 and 6.3]{Dee}.
The circle $U$ has the parametrization $\gamma:[-\pi,\pi] \to \mathbb R^3$ given by
\[ \gamma(t) = (\cos t , \sin t, 0). \]

Suppose $x\in\R^3$ is such that $x\notin\{u_1^2+u_2^2=1,\ u_3\le 0\}$. Then $\Sec_x(U)$ does not contain $(0,0,1)$ and \eqref{eq:formulan1}
implies:
\begin{equation} \label{formula_unknot}
\map(x_1,x_2,x_3)=\frac{1}{4\pi}\int_{-\pi}^{\pi} \frac{(x_1\cos t + x_2 \sin t - 1)dt}{Q+x_3\sqrt{Q}},
\end{equation}
where
\[Q=1+\|x\|^2 - 2x_1\cos t - 2x_2\sin t.\]
Write $x_1=r\cos\theta$, $x_2=r\sin\theta$ for $r\ge 0$. Substituting this into \eqref{formula_unknot}, we observe that $\map$
does not depend on $\theta$, hence we can write $\map=\map(r,x_3)$, that is,
\begin{equation} \label{formula_unknot-u_2=0}
\map(r,x_3) = \frac{1}{4\pi}\int_{-\pi}^{\pi} \dfrac{(r\cos t - 1)dt}{1+r^2+x_3^2 - 2r\cos t + x_3\sqrt{1+r^2+x_3^2 - 2r\cos t}}.
\end{equation}
We have some special cases where we can compute the integral explicitly.
If $x_3 = 0$, we use the identity
$$\cos t = \dfrac{1-\tan^2 (t/2)}{1+\tan^2(t/2)}$$
and deal with improper integrals; there are two situations:

\begin{itemize}

\item $r <1$: we have

\[\map(r,0) = \frac{1}{4\pi}\left[-\dfrac{t}{2} - \arctan \left(\dfrac{1+r}{1-r} \tan\dfrac{t}{2}\right)\right]_{-\pi}^{\pi} = -\frac12;\]

\item $r >1$: we have

\[\map(r,0) = \frac{1}{4\pi}\left[-\dfrac{t}{2} + \arctan \left(\dfrac{r+1}{r-1} \tan\dfrac{t}{2}\right)\right]_{-\pi}^{\pi} = 0.\]

\end{itemize}
This agrees with the geometric interpretation. If we choose the disk $D=\{r\le 1,\ x_3=0\}$ as a Seifert surface
for $U$, then for $x=(r\cos\theta,r\sin\theta,0)$ with $r>1$, the image $\Sec_x(D)$ is one-dimensional, so $\Phi(x)=0$.
Conversely, for $x=(r\cos\theta,r\sin\theta,0)$ with $r<1$ we choose a Seifert surface $\Sigma$
to be the disk $D$ with a smaller disk centered at $x$
replaced by a hemisphere with center at $x$. In this way, the image $\Sec_x(\Sigma)$ is a hemisphere; see Figure~\ref{fig:hemisphere}.
\begin{remark}\label{rem:tzero}
The inverse image $\map^{-1}(0)$ contains (and actually it is equal) to the set $\{x_1^2+x_2^2>1,\ x_3=0\}$. This shows
that the assumption that $t\neq 0$ in Theorem~\ref{thm:mapisproper} is necessary.
\end{remark}
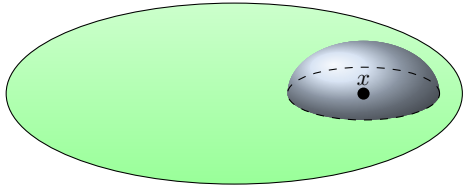
\begin{figure}
\begin{tikzpicture}
\shade[bottom color=green!40, top color=green!20, draw=black] (0,0) ellipse (3cm and 1.2 cm);
\shade[ball color=green!30!blue!20,draw=none] (0.7,0) arc (180:360:1 cm and 0.35 cm) arc (0:180:1cm and 0.7 cm);
\draw[dashed, thin] (1.7,0) ellipse (1cm and 0.35 cm);
\fill[color=black] (1.7,0) circle (0.08cm) node[above,scale=0.8] {$x$};
\end{tikzpicture}
\caption{A Seifert surface for the circle with the property that its image under  $\Sec_x$ is a hemisphere.}\label{fig:hemisphere}
\end{figure}

We now express $\map(r,x_3)$ in terms of elliptic integrals.
We use the following simplification, which follows by explicit computations.
\begin{multline}\label{eq:simplification}
\frac{r\cos t-1}{1+r^2+x_3^2-2r\cos t+x_3\sqrt{1+r^2+x_3^2-2r\cos t}}=\\
=\frac{-x_3(r\cos t-1)}{(1+r^2-2r\cos t)\sqrt{1+r^2+x_3^2-2r\cos t}}+\frac{r\cos t-1}{1+r^2-2r\cos t}.
\end{multline}
Define
\[C(r)=\int_{-\pi}^{\pi}\frac{r\cos t-1}{1+r^2-2r\cos t}dt.\]
Then
\[C(r):=\begin{cases}0\quad&\textup{if}\quad r>1\\ -\pi \quad&\textup{if}\quad r= 1 \\-2\pi \quad&\textup{if}\quad r<1 \end{cases}.\]
Using \eqref{eq:simplification} and $\cos 2\theta = 1 - 2\sin^2 \theta$ we write
\begin{multline*}
4\pi \map(r,x_3)=C(r)+
 \dfrac{2x_3}{\sqrt{(1+r)^2 + x_3^2}} \int_{0}^{\pi/2} \dfrac{dt}{\sqrt{1 - \dfrac{4r}{{(1+r)^2 + x_3^2}}\sin^2 t}} \\
- \dfrac{2x_3 (r^2 -1)}{(1+r^2)^2\sqrt{(1+r)^2 + x_3^2}} \int_{0}^{\pi/2} \dfrac{dt}{\left(1 - \dfrac{4r}{(1+r^2)}\sin^2 t\right)\sqrt{1 - \dfrac{4r}{{(1+r)^2 + x_3^2}}\sin^2 t}} =\\
\dfrac{2x_3}{\sqrt{(1+r)^2 + x_3^2}} \ellK\left(\sqrt{\dfrac{4 r}{(1+r)^2 + x_3^2}}\right) \\
+\dfrac{2x_3 (1-r)}{(1+r)\sqrt{(1+r)^2 + x_3^2}} \ellPi\left(\dfrac{4r}{(1+r)^2},\sqrt{\dfrac{4 r}{(1+r)^2 + x_3^2}}\right) + C(r).
\end{multline*}
We may write the formula in terms of Heuman's Lambda function $\ellL_0$ using the formula relating $\ellPi$ and $\ellL_0$; see
\cite[page 228]{Byrd} or \cite{Paxton}. After straightforward but tedious calculations, we obtain the following explicit formula.

\begin{proposition}[see \expandafter{\cite[Proposition 6.3.1]{Dee}}] \label{prop:elliptic_formula}
\
Let
$x=(x_1,x_2,x_3)\in \R^3$.
\begin{itemize}
\item If $x\notin \{x_1^2 + x_2^2 = 1, x_3\leq 0\}$ \quad and \quad $x_3 \neq 0$. Then
\begin{align*}
4\pi\map(r,x_3)
& = C(r) + \dfrac{2x_3}{\sqrt{(1+r)^2 + x_3^2}} \ellK(k)+\\
&+ \pi \ellL_0 \left(\arcsin \dfrac{|x_3|}{\sqrt{(1-r)^2 + x_3^2}},k\right) \dfrac{x_3(1-r)}{|x_3\|1-r|},
\end{align*}
where
\begin{equation}\label{eq:whatisk}
k=\sqrt{\frac{4r}{(1+r)^2+x_3^2}}.
\end{equation}
\item If $x_3=0$ \quad but \quad $x_1^2+x_2^2\neq 1$, then
\[\map(r,0) = \frac{C(r)}{4\pi}.\]

\item  If $x_1^2 + x_2^2 = 1 \quad and \quad x_3 < 0$, then
\[\map(1,x_3) = - \map(1,-x_3) = \frac14 + \frac{1}{4\pi}\dfrac{2x_3}{\sqrt{4 + x_3^2}} \ellK\left(\sqrt{\dfrac{4}{4 + x_3^2}}\right).\]
\end{itemize}
\end{proposition}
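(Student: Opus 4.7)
The preceding computation already reduces $4\pi\map(r,x_3)$ to
\[
C(r) + \frac{2x_3}{\sqrt{(1+r)^2+x_3^2}}\,\ellK(k) + \frac{2x_3(1-r)}{(1+r)\sqrt{(1+r)^2+x_3^2}}\,\ellPi(\alpha^2,k),
\]
with $k$ as in \eqref{eq:whatisk} and $\alpha^2 = 4r/(1+r)^2$, and it also proves the middle bullet via the direct antiderivative calculation for $x_3 = 0$. Thus the remaining work is to convert the $\ellPi$ term into Heuman's Lambda and to handle the boundary case $r = 1$.

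For the first bullet, the plan is to apply the classical reduction of $\ellPi(\alpha^2,k)$ to $\ellL_0$ in the circular case $\alpha^2 > k^2$, as tabulated in Byrd--Friedman. That identity writes $\ellPi(\alpha^2,k)$ as a linear combination of $\ellK(k)$ and $\ellL_0(\beta,k)$ with $\sin^2\beta = (\alpha^2-k^2)/(\alpha^2(1-k^2))$ and a prefactor involving $\sqrt{(1-\alpha^2)(\alpha^2-k^2)/\alpha^2}$. The two algebraic checks to be performed are the identities
\[
\frac{\alpha^2-k^2}{\alpha^2(1-k^2)} = \frac{x_3^2}{(1-r)^2 + x_3^2}, \qquad \sqrt{\frac{(1-\alpha^2)(\alpha^2-k^2)}{\alpha^2}} = \frac{|1-r|\,|x_3|}{(1+r)\sqrt{(1+r)^2+x_3^2}},
\]
which follow by direct substitution. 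Plugging these into the reduction formula, the prefactor of $\ellPi$ in the derived expression combines with the reduction constant to produce exactly the factor $\pi\,x_3(1-r)/(|x_3|\,|1-r|)$ in front of $\ellL_0$, and the argument $\beta = \arcsin(|x_3|/\sqrt{(1-r)^2 + x_3^2})$ is precisely the one stated.

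For the third bullet, I take the limit $r \to 1^+$ in the first bullet. The Heuman argument tends to $\pi/2$ and $\ellL_0(\pi/2,k) = 1$; for $x_3<0$ one has $\sgn(x_3(1-r)) = +1$ as $r \to 1^+$, so the $\ellL_0$ contribution equals $+\pi$, while $C(r) \to 0$. Collecting terms gives
\[
4\pi\map(1,x_3) = \frac{2x_3}{\sqrt{4+x_3^2}}\,\ellK\bigl(\sqrt{4/(4+x_3^2)}\bigr) + \pi,
\]
which after dividing by $4\pi$ is the stated formula. The asserted odd symmetry $\map(1,x_3) = -\map(1,-x_3)$ is then read off from the same limit with $x_3 > 0$ instead, where the sign factor flips and the $\ellK$ term is odd in $x_3$.

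The main obstacle is sign bookkeeping in the $\ellPi \mapsto \ellL_0$ reduction. Byrd--Friedman state the formula under fixed sign assumptions on $\alpha^2 - k^2$, $1 - \alpha^2$ and the integration variable, whereas in our setting $x_3$ and $1-r$ can independently take either sign, giving four geometric regimes. Verifying that the single uniform expression with prefactor $x_3(1-r)/(|x_3|\,|1-r|)$ correctly covers all four combinations — and yields a globally well-defined value in $\R/\Z$ — is the delicate point, reminiscent of the Case~1--Case~4 analysis carried out in Section~\ref{sec:linear}. The remaining manipulations are routine algebra.
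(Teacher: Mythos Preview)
Your proposal is correct and follows essentially the same route as the paper: the paper derives the $\ellK$/$\ellPi$ expression and then simply says ``we may write the formula in terms of Heuman's Lambda function $\ellL_0$ using the formula relating $\ellPi$ and $\ellL_0$; see \cite[page 228]{Byrd} \ldots\ after straightforward but tedious calculations,'' and your plan is exactly this Byrd--Friedman circular-case reduction, with the two algebraic identities you display being the right checks and the sign bookkeeping across the four $(\sgn x_3,\sgn(1-r))$ regimes being precisely the ``tedious'' part. Your limit argument for the third bullet (and for the odd symmetry) is a clean way to recover the $r=1$ case, which the paper does not spell out.
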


Another approach in computing the solid angle for an unknot was given by F. Paxton, see \cite{Paxton}. He showed that the solid angle subtended at a point $P$ with height $L$ from the unknot and with distance $r_0$ from the axis of the unknot is equal to
\begin{align*}\map=
\begin{cases}
\frac12 - \frac{1}{4\pi}\frac{2L}{R_{max}}\ellK(k) - \frac14 \ellL_0(\xi,k) \quad &\textup{if} \quad r_0 < 1 \\
\frac14 - \frac{1}{4\pi}\frac{2L}{R_{max}}\ellK(k)  \quad &\textup{if} \quad r_0 = 1 \\
- \frac{1}{4\pi}\frac{2L}{R_{max}}\ellK(k) + \frac14\ellL_0(\xi,k) \quad &\textup{if} \quad r_0 > 1
\end{cases}
\end{align*}
where $R_{max} = \sqrt{(1+r_0)^2 + L^2}$, $\xi = \arctan \frac{L}{|1-r_0|}$ and $k$ is given by \eqref{eq:whatisk}.
It can be shown that the Paxton formula agrees with the result of Proposition~\ref{prop:elliptic_formula}.

Finally we remark that the computation of the solid angle of the unknot was already studied by Maxwell. He gave the formulae
in terms of infinite series, see \cite[Chapter XIV]{Maxwell}.

\subsection{Behavior of $\map$ near $U$}\label{sec:linearbehavior}

We shall now investigate the behavior of $\map$ and its partial derivatives near $U$.

$$x_1 = 1+\varepsilon\cos2\pi\lambda,\quad x_2 = 0 \quad\textup{and} \quad x_3 = \varepsilon\sin 2\pi\lambda$$
where  $\varepsilon >0$ is small and $\lambda\in [0,1]$. We have the following result.
\begin{proposition}[see \expandafter{\cite[Proposition 6.4.2]{Dee}}]
The limit as $\varepsilon\to 0^+$ is given by
$$\lim_{\varepsilon\to 0^+} \map(1+\varepsilon\cos2\pi\lambda,0,\varepsilon\sin2\pi\lambda) = -\lambda \quad\in\mathbb R/Z.$$
\end{proposition}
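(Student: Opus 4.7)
\bigskip\noindent\textbf{Proof plan.} The natural strategy is to plug the one-parameter family $x_\varepsilon = (1+\varepsilon\cos 2\pi\lambda,0,\varepsilon\sin 2\pi\lambda)$ into the closed formula for $\map$ from Proposition~\ref{prop:elliptic_formula} and compute the limit of each term separately. Writing $r = 1 + \varepsilon\cos 2\pi\lambda$, $x_3 = \varepsilon\sin 2\pi\lambda$, and $k = \sqrt{4r/((1+r)^2 + x_3^2)}$, one checks immediately that
\[
(1+r)^2+x_3^2 \to 4,\qquad (1-r)^2+x_3^2 = \varepsilon^2,\qquad 1-k^2 = \frac{(1-r)^2+x_3^2}{(1+r)^2+x_3^2}\to \frac{\varepsilon^2}{4},
\]
so $k\to 1^-$ and $k' = \sqrt{1-k^2}\sim\varepsilon/2$. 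In the generic case $\sin 2\pi\lambda\neq 0$, $\cos 2\pi\lambda\neq 0$ the first sub-formula of Proposition~\ref{prop:elliptic_formula} applies for all small $\varepsilon>0$, and the other two sub-formulae handle the four boundary values $\lambda\in\{0,\tfrac14,\tfrac12,\tfrac34\}$ directly (using $C(r)$ in the $x_3=0$ case, and the explicit $\ellK$ expression for $r=1$).

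The heart of the argument is then to evaluate the three $\varepsilon\to 0$ limits in the generic case. First, $\ellK(k)$ diverges only logarithmically by Proposition~\ref{K(k)_growth}, while the prefactor $2x_3/\sqrt{(1+r)^2+x_3^2}$ is $O(\varepsilon)$, so the $\ellK$-term vanishes in the limit. Second,
\[
\arcsin\frac{|x_3|}{\sqrt{(1-r)^2+x_3^2}}\;\longrightarrow\;\beta:=\arcsin|\sin 2\pi\lambda|\in[0,\pi/2].
\]
Third, from the definition of $\ellL_0$ one expands
\[
\ellL_0(\beta,k) = \frac{2}{\pi}\bigl(\ellE(k)\ellF(\beta,k') + \ellK(k)[\ellE(\beta,k')-\ellF(\beta,k')]\bigr);
\]
since $\ellF(\beta,k')\to\beta$, $\ellE(k)\to 1$, and $\ellE(\beta,k')-\ellF(\beta,k') = O(k'^2) = O(\varepsilon^2)$ beats the $O(\log\varepsilon^{-1})$ divergence of $\ellK(k)$, we get $\ellL_0(\beta,k)\to 2\beta/\pi$. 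Finally $C(r)=0$ if $\cos 2\pi\lambda>0$ and $C(r)=-2\pi$ if $\cos 2\pi\lambda<0$, so
\[
4\pi\lim_{\varepsilon\to 0^+}\map(x_\varepsilon) \;=\; C(r) \;+\; 2\beta\,\sgn(x_3)\sgn(1-r) \pmod{4\pi}.
\]

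It remains to verify that the right-hand side equals $-4\pi\lambda\pmod{4\pi}$ in each of the four quadrants of $(\cos 2\pi\lambda,\sin 2\pi\lambda)$. This is a small bookkeeping exercise: in each quadrant one reads off $\beta$ as one of $2\pi\lambda$, $\pi-2\pi\lambda$, $2\pi\lambda-\pi$, or $2\pi-2\pi\lambda$ and pairs it with the appropriate sign and the appropriate value of $C(r)$. For instance, in the first quadrant $\lambda\in(0,\tfrac14)$ one has $\beta=2\pi\lambda$, $C(r)=0$, and sign $+1\cdot+1$, giving $-4\pi\lambda$ directly; in the second quadrant $\lambda\in(\tfrac14,\tfrac12)$ one has $\beta=\pi-2\pi\lambda$, $C(r)=-2\pi$, and sign $-1$, giving $-2\pi+(2\pi-4\pi\lambda)=-4\pi\lambda$; the remaining two quadrants are analogous modulo $4\pi$. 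The main obstacle I anticipate is precisely this sign/modulus bookkeeping (especially in the fourth quadrant, where the answer comes out as $4\pi-4\pi\lambda$ and one must reduce modulo $4\pi$ to recover $-4\pi\lambda$), rather than any analytic difficulty; once the $\ellK$-term is shown to vanish and the $\ellL_0$ limit is computed, everything else is case analysis.
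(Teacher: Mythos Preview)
Your approach is exactly the one sketched in the paper: substitute into Proposition~\ref{prop:elliptic_formula}, use Proposition~\ref{K(k)_growth} to kill the $\ellK$-term, and then reduce to the limit of the Heuman term. Your write-up is in fact more detailed than the paper's sketch, which only records the vanishing of the $\ellK$-term and leaves the rest implicit.

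One small bookkeeping slip: in the first quadrant $\lambda\in(0,\tfrac14)$ you have $r=1+\varepsilon\cos 2\pi\lambda>1$, so $\sgn(1-r)=-1$, not $+1$; the product $\sgn(x_3)\sgn(1-r)$ is therefore $-1$ there (and $+1$ in the second quadrant, etc.). Your stated final values in each quadrant are nonetheless correct, so this looks like a transcription error rather than a conceptual one --- with the corrected signs the arithmetic you wrote down does produce $-4\pi\lambda\pmod{4\pi}$ in every case.
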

\begin{proof}[Sketch of proof]
Use $r = 1 + \varepsilon\cos 2\pi\lambda$, $x_3 = \varepsilon\sin 2\pi\lambda$ and apply Proposition~\ref{prop:elliptic_formula}
together with a fact that
$$\lim_{\varepsilon\to 0^+}\dfrac{2\varepsilon\sin2\pi\lambda}{\sqrt{4+4\varepsilon\cos2\pi\lambda+\varepsilon^2}}  \ellK\left(\sqrt{\dfrac{4+4\varepsilon\cos2\pi\lambda}{4+4\varepsilon\cos2\pi\lambda+\varepsilon^2}}\right)  = 0. $$
\end{proof}
\begin{remark}
The sign of the limit is $-\lambda$ and not $+\lambda$. It is not hard to see that the orientation convention for the circle, that is, such that
$t\mapsto (\cos t,\sin t,0)$ is an oriented parametrization of $U$ 
is opposite to the convention adopted in Section~\ref{sec:sign}.
\end{remark}

Next we compute the derivatives of $\map$ near $U$. It is clear that the map $\map$ for the circle is invariant with respect to the rotational
symmetry around the $z$--axis.
Hence, if $\alpha$ is the longitudinal coordinate near $U$, then $\dfrac{\partial}{\partial\alpha}\map = 0$.
The two coordinates we have to deal with are the meridional and radial coordinates $\lambda$ and $\varepsilon$. The first
result is the following.
\begin{proposition}[see \expandafter{\cite[Proposition 6.4.3]{Dee}}]\label{prop:derivative}
We have
\[\dfrac{\partial}{\partial\varepsilon} \map(1+\varepsilon\cos2\pi\lambda,0,\varepsilon\sin2\pi\lambda) = \frac{1}{4\pi}\dfrac{2\sin2\pi\lambda( \ellK(k)-\ellE(k))}{(1+\varepsilon\cos2\pi\lambda)\sqrt{4+4\varepsilon\cos2\pi\lambda+\varepsilon^2}}.\]
\end{proposition}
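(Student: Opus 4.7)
The plan is to differentiate directly the closed-form expression for $\map(r,x_3)$ given by Proposition~\ref{prop:elliptic_formula} after substituting $r=1+\varepsilon\cos 2\pi\lambda$ and $x_3=\varepsilon\sin 2\pi\lambda$. For small $\varepsilon>0$ and a generic $\lambda$ (so that $\cos 2\pi\lambda \neq 0$ and $\sin 2\pi\lambda \neq 0$), both the piecewise-constant function $C(r)$ and the sign factor $\tfrac{x_3(1-r)}{|x_3||1-r|}$ are locally constant in $\varepsilon$, so they contribute nothing to the derivative. This reduces the problem to differentiating only two terms.

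A first useful observation is that under this parametrization the quantities appearing in the formula simplify: one finds $(1+r)^2+x_3^2=4+4\varepsilon\cos 2\pi\lambda+\varepsilon^2$, while $(1-r)^2+x_3^2=\varepsilon^2$. In particular
\[
\beta=\arcsin\frac{|x_3|}{\sqrt{(1-r)^2+x_3^2}}=\arcsin|\sin 2\pi\lambda|
\]
is \emph{independent} of $\varepsilon$. Hence in the Heuman term $\ellL_0(\beta,k)$ the only $\varepsilon$-dependence enters through $k$, and formula \eqref{dHeuman/d(angle)} is not needed.

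Next I would compute the relevant derivatives explicitly. Writing $R=\sqrt{4+4\varepsilon\cos 2\pi\lambda+\varepsilon^2}$, so that $k^2=4r/R^2$, a direct calculation shows
\[
\frac{d(k^2)}{d\varepsilon}=\frac{-4\varepsilon(1+r)}{R^4},\qquad \frac{dk}{d\varepsilon}=\frac{-2\varepsilon(1+r)}{k R^4}.
\]
Then using \eqref{dK(k)/dk} for $d\ellK/dk$ and \eqref{dHeuman/dk} for $\partial\ellL_0/\partial k$, together with the product rule on $\tfrac{2x_3}{R}\ellK(k)$, one collects all contributions. The two terms involving $\ellK(k)$ arise from: (i) differentiating the prefactor $2x_3/R$ while keeping $\ellK(k)$ fixed, and (ii) differentiating $\ellK(k)$ (via $k$). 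The terms involving $\ellE(k)$ come from \eqref{dK(k)/dk} and \eqref{dHeuman/dk}. The algebra to be carried out is to verify that the coefficient of $\ellE(k)$ and the coefficient of $\ellK(k)$ combine into a single factor proportional to $\ellK(k)-\ellE(k)$ with the stated coefficient $\dfrac{2\sin 2\pi\lambda}{(1+\varepsilon\cos 2\pi\lambda)\sqrt{4+4\varepsilon\cos 2\pi\lambda+\varepsilon^2}}$.

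The main obstacle is precisely this last bookkeeping step: one must massage several fractions involving $k$, $k'^2=1-k^2$, $R$, $r$, $\varepsilon$, and $\sin 2\pi\lambda$, using the identities $k^2 R^2=4r$ and $k'^2 R^2=(1-r)^2+x_3^2=\varepsilon^2$, to see that the $\ellK(k)$-coefficients from the three sources cancel down to minus the $\ellE(k)$-coefficient, producing the clean difference $\ellK(k)-\ellE(k)$. Once this cancellation is verified, the formula follows. Continuity of both sides in $\lambda$ then extends the identity to the excluded values $\sin 2\pi\lambda=0$ and $\cos 2\pi\lambda=0$.
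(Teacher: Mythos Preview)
The paper does not actually supply a proof of this proposition; it merely cites \cite[Proposition~6.4.3]{Dee}. Your approach---differentiating the closed form of Proposition~\ref{prop:elliptic_formula} after observing that $C(r)$, the sign factor, and $\beta=\arcsin|\sin 2\pi\lambda|$ are all independent of $\varepsilon$---is correct and is exactly the method the paper employs for the companion $\lambda$-derivative in Proposition~\ref{prop: Phi_derivative_lambda}. In particular, your key simplification $(1-r)^2+x_3^2=\varepsilon^2$ (so that $k'=\varepsilon/R$ and $\beta$ is constant in $\varepsilon$) matches the identities used there, and the derivative formulae \eqref{dK(k)/dk} and \eqref{dHeuman/dk} you invoke are precisely the ones the paper records for this purpose.
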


We observe that as $\varepsilon \to 0^+$, we have $k\to 1^-$ by \eqref{eq:whatisk}. The numerator $\ellK(k)-\ellE(k)$ blows up, so the
right hand side of the formula in Proposition~\ref{prop:derivative} is divergent as $\varepsilon\to 0$.
For future use, we remark that by \eqref{eq:limKk} and Proposition~\ref{prop:derivative} we have
\begin{equation}\label{eq:linearr}
\left\vert\dfrac{\partial}{\partial\varepsilon} \map(1+\varepsilon\cos2\pi\lambda,0,\varepsilon\sin2\pi\lambda)\right\vert\le
C_{lin}(-\ln\varepsilon)
\end{equation}
for some constant $C_{lin}$, which can be explicitly calculated.

By Proposition~\ref{prop:derivative} the sign of $\dfrac{\partial}{\partial\varepsilon} \map$ depends on $\sin2\pi\lambda$. Hence, $\map(1+\varepsilon\cos2\pi\lambda,0,\varepsilon\sin2\pi\lambda)$
is non-decreasing with respect to $\varepsilon$ when $\lambda\in[0,\frac12]$ and it is non-increasing when $\lambda\in[\frac12,1]$.
Since we know that
$$\lim_{\varepsilon\to 0^+} \map(1+\varepsilon\cos2\pi\lambda,0,\varepsilon\sin2\pi\lambda) = -\lambda,$$
Dini's theorem, see e.g. \cite[Theorem 7.13]{Rudin}, yields that as $\varepsilon\to 0^+$,
$\map(1+\varepsilon\cos2\pi\lambda,0,\varepsilon\sin2\pi\lambda)$ converges uniformly to $-\lambda$ on $[0,1]$.
With this, the map $(\varepsilon,\lambda)\mapsto\map(1+\varepsilon\cos2\pi\lambda,0,\varepsilon\sin2\pi\lambda)$ extends to the set $\{\varepsilon=0\}$
even though $\map$ itself is not defined at $(1,0,0)$.
\begin{remark}
This extension of $\map$ through $\{\varepsilon=0\}$ will be generalized in the Continuous Extension Lemma~\ref{prop:extend1}.
\end{remark}

We now estimate the derivative of $\map$ with respect to $\lambda$.

\begin{proposition}[see \expandafter{\cite[Proposition 6.4.4]{Dee}}] \label{prop: Phi_derivative_lambda}
$$\dfrac{\partial}{\partial\lambda} \map(1+\varepsilon\cos2\pi\lambda,0,\varepsilon\sin2\pi\lambda) < 0$$
and
$$\lim_{\varepsilon\to 0^+}\dfrac{\partial}{\partial\lambda} \map(1+\varepsilon\cos2\pi\lambda,0,\varepsilon\sin2\pi\lambda)= -1.$$
\end{proposition}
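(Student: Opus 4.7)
The plan is to differentiate the explicit formula from Proposition~\ref{prop:elliptic_formula} via the chain rule and the elliptic-integral identities \eqref{dK(k)/dk}--\eqref{dHeuman/d(angle)}, and then take $\varepsilon\to 0^+$ using the asymptotic of $\ellK$ from \eqref{eq:limKk}.

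The key simplification under $r=1+\varepsilon\cos 2\pi\lambda$, $x_3=\varepsilon\sin 2\pi\lambda$ is that $(1-r)^2+x_3^2=\varepsilon^2$ identically, so the angle $\beta:=\arcsin(|x_3|/\varepsilon)=\arcsin|\sin 2\pi\lambda|$ is $\varepsilon$-independent and $k$ from \eqref{eq:whatisk} depends on $\lambda$ only through $\cos 2\pi\lambda$. Since $C(r)$ is locally constant away from $\cos 2\pi\lambda=0$ and the sign factor $s=\sgn(x_3)\sgn(1-r)$ is locally constant away from the vanishing loci of $\sin 2\pi\lambda$ and $\cos 2\pi\lambda$, on each of the four open arcs $\lambda\in(0,\tfrac14),(\tfrac14,\tfrac12),(\tfrac12,\tfrac34),(\tfrac34,1)$ the function $g_\varepsilon(\lambda):=\map(1+\varepsilon\cos 2\pi\lambda,0,\varepsilon\sin 2\pi\lambda)$ is smooth, and its $\lambda$-derivative is obtained by differentiating only the two summands $\frac{2x_3\ellK(k)}{\sqrt{(1+r)^2+x_3^2}}$ and $\pi\ellL_0(\beta,k)\cdot s$, using $\partial_\lambda r=-2\pi x_3$ and $\partial_\lambda x_3=2\pi(r-1)$.

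Executing the chain rule produces expressions involving $\ellK(k)-\ellE(k)$ (from $\ellK'(k)$ and $\partial_k\ellL_0$) and $\ellE(k)-(k')^2\sin^2\beta\cdot\ellK(k)$ (from $\partial_\beta\ellL_0$). Substantial cancellations follow from the algebraic relations between $\beta$ and $k$ imposed by the substitution. Negativity of $\partial g_\varepsilon/\partial\lambda$ then follows by organizing the resulting single expression uniformly across the four arcs, using $\ellK(k)>\ellE(k)>0$ for $k\in(0,1)$ together with the sign compatibility of $s$ with $\sgn(\sin 2\pi\lambda\cos 2\pi\lambda)$ in each sector.

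For the limit, $(k')^2=\varepsilon^2/(4+4\varepsilon\cos 2\pi\lambda+\varepsilon^2)\to 0$, so by \eqref{eq:limKk} we have $\ellK(k)=-\ln\varepsilon+O(1)$, while $\ellE(k)\to 1$ and $\ellL_0(\beta,k)\to 2\beta/\pi$. Every coefficient of $\ellK(k)$ in the derivative carries an extra factor of $x_3$ or $(1-r)$ of size $O(\varepsilon)$, so its contribution vanishes like $\varepsilon\ln(1/\varepsilon)$. The surviving finite pieces, after a quadrant-by-quadrant sign analysis parallel to the one verifying $g_\varepsilon\to -\lambda$, combine to the constant $-1$. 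The main obstacle will be precisely this asymptotic bookkeeping: individual $\ellK$-contributions diverge logarithmically and must be shown to cancel, and the correct value of the limit depends on matching $s$ consistently with $\partial_\lambda\beta=\pm 2\pi\cos 2\pi\lambda/|\cos 2\pi\lambda|$ in each of the four sectors. The already-established identity $g_\varepsilon\to-\lambda$ provides a useful consistency check on the result.
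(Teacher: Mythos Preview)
Your approach is essentially the same as the paper's: differentiate the explicit formula of Proposition~\ref{prop:elliptic_formula} via the chain rule and the identities \eqref{dK(k)/dk}--\eqref{dHeuman/d(angle)}, then pass to the limit using $k\to 1$, $k'\to 0$, $k'\ellK(k)\to 0$, so that the only surviving term is $-\ellE(k)/\sqrt{1-k'^2\sin^2 2\pi\lambda}\to -\ellE(1)=-1$. The paper carries out the computation more explicitly (and, like your sketch, leaves the strict negativity claim essentially implicit in the displayed formula rather than arguing it in detail), but the strategy, the key simplification $(1-r)^2+x_3^2=\varepsilon^2$, and the asymptotic bookkeeping you outline all match.
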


\begin{proof}
Set
\begin{align*}
k &= \sqrt{\dfrac{4+4\varepsilon\cos2\pi\lambda}{4+4\varepsilon\cos2\pi\lambda + \varepsilon^2}}\\
k' &= \sqrt{1-k^2} = \dfrac{\varepsilon}{\sqrt{4+4\varepsilon\cos2\pi\lambda + \varepsilon^2}}.
\end{align*}

Note that
\begin{align*}
\dfrac{\partial k}{\partial\lambda} &= 
2\pi\dfrac{-k'^3\sin2\pi\lambda}{\sqrt{1+\varepsilon\cos2\pi\lambda}}\\
\intertext{and}
\dfrac{\partial k'}{\partial\lambda} &= \frac{-k}{\sqrt{1-k^2}} \frac{\partial k}{\partial\lambda} = -\frac{k}{k'} \frac{\partial k}{\partial\lambda}.
\end{align*}
Using \eqref{dHeuman/dk} and \eqref{dHeuman/d(angle)} we have
\begin{align*}
&\dfrac{\partial}{\partial\lambda} \map(1+\varepsilon\cos2\pi\lambda,0,\varepsilon\sin2\pi\lambda) \\
&= \frac{1}{4\pi}
\dfrac{\partial}{\partial\lambda} (2\sin2\pi\lambda k' \ellK(k) \pm \pi\ellL_0 (\arcsin|\sin2\pi\lambda|,k))\\
&= \frac{1}{2\pi}\sin 2\pi\lambda\left(\ellK(k)\frac{\partial k'}{\partial\lambda} + k'\frac{\partial \ellK(k)}{\partial\lambda}\right) + k'\ellK(k)\cos 2\pi\lambda\\
&\pm \frac{1}{4}\left(\frac{\partial}{\partial(\arcsin|\sin2\pi\lambda|)}\ellL_0 (\arcsin|\sin2\pi\lambda|,k))\right) \frac{\partial (\arcsin|\sin2\pi\lambda|)}{\partial\lambda}\\
&\pm \frac{1}{4}\left(\frac{\partial}{\partial k}\ellL_0 (\arcsin|\sin2\pi\lambda|,k))\right) \frac{\partial k}{\partial\lambda}\\
&= \frac{1}{2\pi}\sin2\pi\lambda\dfrac{\partial k}{\partial\lambda}\left(\dfrac{\ellE(k)- \ellK(k)}{kk'}\right) + k' \ellK(k)\cos2\pi\lambda \\
&- \left(\dfrac{\ellE(k) - k'^2\sin^2 2\pi\lambda  \ellK(k))}{\sqrt{1-k'^2\sin^2 2\pi\lambda}}\right) - \frac{1}{2\pi}\left(\dfrac{(\ellE(k)- \ellK(k))\sin 2\pi\lambda\cos 2\pi\lambda}{k \sqrt{1-k'^2\sin^2 2\pi\lambda}}\right)\dfrac{\partial k}{\partial\lambda}.
\end{align*}
See also \cite[Equation (6.11) and Proposition 6.4.3]{Dee}. As $\varepsilon\to 0^+$, we have $k\to 1$, $k'\to 0$.
Since $k' \ellK(k)\to 0$ and $k'\ellE(k)\to 0$ as $\varepsilon\to 0^+$,
the only significant term in the above expression is
$-\dfrac{\ellE(k)}{\sqrt{1-k'^2\sin^2 2\pi\lambda}}$, hence
$$\lim_{\varepsilon\to 0^+}\dfrac{\partial}{\partial\lambda} \map(1+\varepsilon\cos 2\pi\lambda,0,\varepsilon\sin 2\pi\lambda) = -\ellE(1) = -1.$$
\end{proof}

We have estimated the derivatives of $\map$ with respect to $\varepsilon$ and $\lambda$. We can now give the following corollary, which is a
straightforward consequence of \eqref{eq:linearr}.
\begin{corollary}\label{cor:clin}
The derivatives $\frac{\partial}{\partial x_j}\map(x_1,x_2,x_3)$, $j=1,2,3$ have at most a logarithmic pole at points $(x_1,x_2,x_3)$ close to
$U$. More precisely, there exists a constant $C_{circ}$ such that
\[\left|\frac{\partial}{\partial x_j}\map(x_1,x_2,x_3)\right|\le C_{circ}(-\ln dist((x_1,x_2,x_3),U)).\]
\end{corollary}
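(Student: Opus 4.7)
The plan is to use the chain rule to transfer the bound \eqref{eq:linearr} on $|\partial\map/\partial\varepsilon|$, together with Proposition~\ref{prop: Phi_derivative_lambda}, to bounds on the Cartesian partial derivatives $\partial\map/\partial x_j$.

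First I would exploit the rotational symmetry of $U$ about the $x_3$-axis. This forces $\map$ to depend only on $r=\sqrt{x_1^2+x_2^2}$ and $x_3$, so $\partial\map/\partial\alpha=0$ where $\alpha$ denotes the azimuthal angle. Consequently
\[
\frac{\partial\map}{\partial x_1}=\cos\alpha\,\frac{\partial\map}{\partial r},\qquad\frac{\partial\map}{\partial x_2}=\sin\alpha\,\frac{\partial\map}{\partial r},
\]
while $\partial\map/\partial x_3$ is unchanged. Hence it suffices to estimate $\partial\map/\partial r$ and $\partial\map/\partial x_3$.

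Next, introduce the meridional polar coordinates $\varepsilon=\sqrt{(r-1)^2+x_3^2}$ and $2\pi\lambda=\arg\bigl((r-1)+\ii x_3\bigr)$ in the $(r,x_3)$ half-plane. One expresses $\partial/\partial r$ and $\partial/\partial x_3$ as linear combinations of $\partial/\partial\varepsilon$ and $\varepsilon^{-1}\partial/\partial\lambda$ with uniformly bounded trigonometric coefficients. Applying \eqref{eq:linearr} controls the contribution of $\partial\map/\partial\varepsilon$ by $C_{lin}(-\ln\varepsilon)$, and Proposition~\ref{prop: Phi_derivative_lambda} (together with the monotonicity of $\map$ in $\lambda$ and the Dini argument used in its proof) provides a uniform bound on $\partial\map/\partial\lambda$ in a neighborhood of $U$. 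Since $\varepsilon=\dist((x_1,x_2,x_3),U)$ for points sufficiently close to $U$, combining these yields the desired estimate with a constant $C_{circ}$ depending only on $C_{lin}$ and the uniform bound on $\partial\map/\partial\lambda$.

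The main technical concern is the $\varepsilon^{-1}$ prefactor produced by the chain rule in the $\partial/\partial\lambda$ term. One must verify that this apparent singularity does not dominate the overall estimate, so that the logarithmic growth of $\partial\map/\partial\varepsilon$ governs the final bound; this is implicit in the authors' description of the corollary as "a straightforward consequence of \eqref{eq:linearr}", and it is the step that must be checked carefully before assembling the constants into $C_{circ}$.
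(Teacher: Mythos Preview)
Your concern about the $\varepsilon^{-1}$ prefactor is not a technicality to be ``checked carefully''; it is a genuine obstruction, and in fact the stated inequality does not hold. The chain rule gives
\[
\frac{\partial\map}{\partial r}=\cos(2\pi\lambda)\,\frac{\partial\map}{\partial\varepsilon}-\frac{\sin(2\pi\lambda)}{2\pi\varepsilon}\,\frac{\partial\map}{\partial\lambda},
\]
and by Proposition~\ref{prop: Phi_derivative_lambda} one has $\partial\map/\partial\lambda\to-1$; a closer inspection of that proof shows $\partial\map/\partial\lambda=-1+O(\varepsilon\ln\varepsilon)$, so the second term contributes $\sin(2\pi\lambda)/(2\pi\varepsilon)+O(\ln\varepsilon)$, which is not cancelled by the first. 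Concretely, at the point $x=(1,0,h)$ with $h>0$ small one has $\varepsilon=h$, $\lambda=\tfrac14$, and differentiating the integral \eqref{formula_unknot} directly (the integrand near $t=0$ behaves like $\big(\sqrt{t^2+h^2}(\sqrt{t^2+h^2}+h)\big)^{-1}$) gives $\partial\map/\partial x_1\sim 1/(2\pi h)$, not $O(-\ln h)$.

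This is also clear geometrically: near $U$ the map $\map$ is asymptotic to the meridional angle $-\lambda$, and the Cartesian gradient of $\lambda$ is of order $\varepsilon^{-1}$. The logarithmic estimate \eqref{eq:linearr} controls only the \emph{radial} derivative $\partial\map/\partial\varepsilon$; once one converts to Cartesian coordinates the angular contribution dominates. The correct bound for $|\partial\map/\partial x_j|$ near $U$ is $O(\varepsilon^{-1})$, which is consistent with the general Main Estimate Theorem~\ref{prop:nextbound} (for $n=1$ this yields $O(\varepsilon^{-\theta})$ for every $\theta<1$). Thus the corollary as written appears to be in error, and no argument along the lines you sketch---nor any other---can establish it.
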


We remark that from \eqref{eq:pullbackofeta}, we get much weaker estimates on the derivative.
We do not know if these weaker estimates can be improved for general manifolds $M$.

To conclude, we show level sets of the function $(r,x_3)\mapsto\map(r,x_3)$ for the circle in Figure~\ref{fig:unknotmap}. Notice that in the Figure
the half-lines
stemming from point $(1,0)$ (and not parallel to the $x_3=0$ line) intersect infinitely many level sets near the point $(1,0)$.
This suggests that the radial derivative $\frac{\partial}{\partial\varepsilon}\map(1+\varepsilon\cos2\pi\lambda,\varepsilon\sin2\pi\lambda)$ is unbounded
as $\varepsilon\to 0^+$. We proved this fact rigorously in Proposition~\ref{prop:derivative}.
\begin{figure}
\includegraphics[scale=0.5]{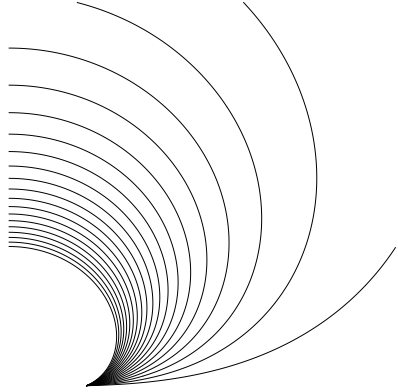}
\caption{Level sets of the function $(r,x_3)\mapsto\map(r,x_3)$ for the circle.}\label{fig:unknotmap}
\end{figure}
\section{Derivatives of $\map$ near $M$}\label{sec:behavior}

We begin by recalling a well-known fact in differential geometry.
\begin{proposition}\label{prop:aux1}
Let $X\subset\R^{n+2}$ be a $k$-dimensional, smooth, compact submanifold with smooth boundary. Then, there exists a constant $C_X$ such that for every $x\in\R^{n+2}$
and for any $r>0$ we have
\[\vol_k(X\cap B(x,r))\le C_X r^k.\]
Moreover, increasing $C_X$ if necessary, we may assume that if $\omega$ is a $k$--form on $\R^{n+2}$ whose coefficients are bounded
by $T$, then $|\int_{X\cap B(x,r)}\omega|\le C_X T r^k$.
\end{proposition}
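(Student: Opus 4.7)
The plan is to split the estimate by the scale of $r$ relative to the intrinsic geometry of $X$, using the fact that $X$ is locally a bounded-slope graph over its tangent plane.

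First, I would use compactness and smoothness to extract a single scale $r_0>0$ with the following property: for every $p\in X$, the piece $X\cap B(p,r_0)$ is the graph of a smooth map $g_p\colon V_p\to T_pX^\perp$, where $V_p$ is an open subset of the affine tangent plane $T_pX\subset\R^{n+2}$, and $g_p$ is $1$-Lipschitz (or say, has derivative of norm at most $1$). Such a uniform $r_0$ exists because the second fundamental form of $X$ is bounded (by compactness), so there is a uniform tubular neighborhood in which the nearest-point projection to $X$ is well-defined and $X$ is a graph over each tangent plane; the standard implicit-function argument gives uniform bounds on the graphing function. If $X$ has boundary, one only has a manifold with corners near $\partial X$, but the same local graph statement still holds, possibly over a half-plane of $T_pX$.

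Next, the case of small $r$: suppose $r\le r_0/2$ and $X\cap B(x,r)\ne\varnothing$. Pick any $p\in X\cap B(x,r)$. Then $X\cap B(x,r)\subset X\cap B(p,2r)\subset X\cap B(p,r_0)$, so $X\cap B(x,r)$ is contained in the graph of $g_p$. Its image under the orthogonal projection $\pi_p\colon\R^{n+2}\to T_pX$ is contained in $T_pX\cap B(p,2r)$, a $k$-ball of volume $\omega_k (2r)^k$. Because $g_p$ is $1$-Lipschitz, the $k$-volume of its graph over any measurable set is at most $2^{k/2}$ times the $k$-volume of the set itself (the Jacobian factor $\sqrt{1+|Dg_p|^2}^{\,k}$ is bounded), so
\[
\vol_k(X\cap B(x,r))\le 2^{k/2}\omega_k(2r)^k=:C_1 r^k.
\]
The case of large $r$ is trivial: for $r>r_0/2$ one has $\vol_k(X\cap B(x,r))\le\vol_k(X)\le\vol_k(X)\cdot(2/r_0)^k\cdot r^k=:C_2 r^k$. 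Setting $C_X:=\max(C_1,C_2)$ gives the first statement.

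Finally, to handle the form estimate I would use the same parametrization. On the graph patch, write $\omega=\sum_I\omega_I\,dx^{i_1}\wedge\cdots\wedge dx^{i_k}$ with $|\omega_I|\le T$. Pulling back by the graph parametrization $\phi_p\colon V_p\to X$, $\phi_p(u)=u+g_p(u)$, the only surviving top form on $V_p$ is $du^1\wedge\cdots\wedge du^k$, and its coefficient is a sum of at most $\binom{n+2}{k}$ terms, each a bounded polynomial in the entries of $D\phi_p$ (which are bounded by a constant depending only on $n,k$ since $g_p$ is $1$-Lipschitz), multiplied by some $\omega_I\circ\phi_p$. Hence
\[
\Bigl|\int_{X\cap B(x,r)}\omega\Bigr|\le C'\,T\cdot\vol_k\bigl(\pi_p(X\cap B(x,r))\bigr)\le C'\,T\,\omega_k(2r)^k,
\]
with $C'$ depending only on $n$ and $k$. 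For $r>r_0/2$ the same total-volume argument gives a bound $\le C''T\vol_k(X)\le C'''\,T\,r^k$. After enlarging $C_X$, the second assertion follows.

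The only subtle step is the uniform local graphing, i.e.\ the existence of $r_0$ independent of $p$; everything else is then routine. This is where compactness and the bounded second fundamental form of $X$ enter in an essential way, and the presence of $\partial X$ requires the (standard) boundary-version of the tubular neighborhood theorem.
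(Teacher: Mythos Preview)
Your argument is correct and essentially complete. The paper's own proof is quite different in flavor: it observes that the density ratio $\vol_k(X\cap B(x,r))/r^k$ converges pointwise (as $r\to 0$) to $0$, $\tfrac12\delta_k$ or $\delta_k$ according to whether $x\notin X$, $x\in\partial X$ or $x\in X\setminus\partial X$, and then invokes Vitali's theorem to upgrade this to a uniform bound $\le 2\delta_k$ on some scale $r_0$ independent of $x$; the form estimate is simply declared ``standard and left to the reader''. Your approach instead produces the uniform scale $r_0$ directly from the bounded second fundamental form via a uniform graphing lemma, and then reads off both the volume bound and the form bound from the explicit $1$-Lipschitz graph parametrization. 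The benefit of your route is that it is constructive (you get explicit constants in terms of $n,k$ and $\vol_k(X)$), it handles the boundary case transparently, and it actually carries out the form estimate rather than delegating it; the paper's route is shorter to state but leans on a somewhat cryptic appeal to Vitali and leaves more to the reader.
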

The result is well-known to the experts, therefore we present only a sketchy proof.
\begin{proof}[Sketch of proof]
Let $\delta_k$ be the volume of unit $k$--dimensional ball.
By smoothness of $X$ we infer that $\lim_{r\to 0}\displaystyle\frac{\vol_k(X\cap B(x,r))}{r^k}$ is $0$, $\frac12\delta_k$ or $\delta_k$
depending on whether $x\notin X$, $x\in\partial X$ or $x\in X\setminus\partial X$.
Using Vitali's theorem,
one shows that there exists $r_0$ independent of $x$
such that $\displaystyle\frac{\vol_k(X\cap B(x,r))}{r^k}\le 2\delta_k$ for all $r<r_0$. We take $C_X$ to be the maximum
of $2\delta_k$ and $\vol_k(X)/r_0^k$.

The second part is standard and left to the reader.
\end{proof}

\subsection{The Separation Lemma}
The form $\eta_z$ used  in Section~\ref{sec:pullback} has a pole at $z\in S^{n+1}$. In the applications for given $x\in\R^{n+2}\setminus M$
we choose a point $z$ such that $z\notin\Sec_x(M)$. Such a point exists, see Remark~\ref{rem:znotinM}  above. However, in order to obtain
a meaningful bound for $\Sec_x^*\eta_z$, we need to know that $z$ is separated from $\Sec_x(M)$, in the sense
that there exists a constant $D$ such that $\langle y,z\rangle\le D$ for any $y\in\Sec_x(M)$. In this
section we show that the constant $D<1$ can be chosen independently of $x$.

\begin{lemma}[Separation Lemma]\label{lem:aux2}
There exist $\varepsilon_0>0$ and $D<1$ such that the set $$N_0=(M+B(0,\varepsilon_0))\setminus M$$
of points at distance less than $\varepsilon_0$
from $M$ (and not lying in $M$)
can be covered by a finite number of open sets $U_1,\ldots, U_l$ with the following property: for each $i$ there exists
a point $z_i\in S^{n+1}$ such that for any $x\in U_i$
we have $\Sec_x(M)\subset \{u\in S^{n+1}:\langle u,z_i\rangle\le D\}$.
\end{lemma}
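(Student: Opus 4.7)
The plan is to use compactness of the unit normal sphere bundle $SNM := \{(p, w) : p \in M,\, w \in N_p M,\, \|w\|=1\}$, which is a compact manifold since $M$ is closed and the normal bundle of $M \subset \R^{n+2}$ has rank $2$. For $\varepsilon_0$ smaller than the reach of $M$, the map $(p, w, t) \mapsto p + tw$ is a diffeomorphism from $SNM \times (0, \varepsilon_0)$ onto the tubular neighborhood $N_0$, so it suffices to build a finite open cover of $SNM$ and pull it back.

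The heart of the argument is to exhibit, for each $(p, w) \in SNM$, a closed proper subset $K_{p,w} \subsetneq S^{n+1}$ such that $\Sec_{p' + t' w'}(M)$ is contained in an arbitrarily small neighborhood of $K_{p,w}$ whenever $(p', w', t')$ is close enough to $(p, w, 0^+)$ in $SNM \times [0, \varepsilon_0)$. Split $M = (M \cap B(p, r)) \cup (M \setminus B(p, r))$ for a small $r > 0$. On the outer piece $\Sec_{p+tw} \to \Sec_p$ uniformly, contributing $\overline{\Sec_p(M \setminus \{p\})}$. On the inner piece, a local parametrization $y = p + u + O(\|u\|^2)$ with $u \in T_p M$ gives
\[
\Sec_{p+tw}(y) = \frac{u - tw + O(\|u\|^2)}{\|u - tw + O(\|u\|^2)\|};
\]
setting $\sigma = \|u\|/t$ and $\hat u = u/\|u\|$, and using $\hat u \perp w$, the right-hand side limits to $(\sigma \hat u - w)/\sqrt{\sigma^2 + 1}$ as $t, \|u\| \to 0$. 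Sweeping $\sigma \in [0, \infty)$ and $\hat u \in S(T_p M)$ produces exactly the hemisphere $\{v \in S^{n+1} \cap \mathrm{span}(T_p M, w) : \langle v, w \rangle \le 0\}$, so
\[
K_{p,w} := \overline{\Sec_p(M \setminus \{p\})} \cup \bigl\{v \in S^{n+1} \cap \mathrm{span}(T_p M, w) : \langle v, w \rangle \le 0\bigr\}
\]
is an $n$-dimensional closed subset of $S^{n+1}$, hence a proper one.

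Given this, I would choose $z_{p,w} \in S^{n+1} \setminus K_{p,w}$ with $\delta_{p,w} := \dist(z_{p,w}, K_{p,w}) > 0$. By the upper-semicontinuity just described, there exist an open neighborhood $\mathcal V_{p,w} \subset SNM$ of $(p, w)$ and $\varepsilon_{p,w} \in (0, \varepsilon_0)$ such that $\langle \Sec_{p' + t'w'}(y), z_{p,w} \rangle \le 1 - \tfrac{1}{2}\delta_{p,w}^2$ for all $y \in M$, all $(p', w') \in \mathcal V_{p,w}$ and all $t' \in (0, \varepsilon_{p,w})$. Compactness of $SNM$ yields a finite subcover $\mathcal V_{p_1, w_1}, \ldots, \mathcal V_{p_l, w_l}$; setting $\varepsilon_0 := \min_i \varepsilon_{p_i, w_i}$, $D := \max_i (1 - \tfrac{1}{2} \delta_{p_i, w_i}^2) < 1$, $z_i := z_{p_i, w_i}$, and $U_i := \{p' + t'w' : (p', w') \in \mathcal V_{p_i, w_i},\, 0 < t' < \varepsilon_0\}$ completes the construction. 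The hard part is the uniform-in-$(p, w)$ semicontinuity $\Sec_{p' + t'w'}(M) \subset K_{p,w} + o(1)$ as $(p', w', t') \to (p, w, 0^+)$: the outer piece is routine by uniform continuity on a compact set, while the inner piece requires uniform control of the $O(\|u\|^2)$ remainder in the tangent/normal Taylor expansion, obtainable from the boundedness of the second fundamental form of the compact manifold $M$.
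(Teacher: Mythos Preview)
Your proof is correct, and it follows a genuinely different route from the paper's.

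The paper parametrizes the problem by points $x\in M$ (not by normal directions). For each $x$ it chooses a direction $v\in S^{n+1}$ outside a neighborhood $U$ of the tangent sphere $S_x=T_xM\cap S^{n+1}$ and outside $\Sec_x(M\setminus B(x,r))$, and then proves by a sequence-and-contradiction argument the key dichotomy: for every $x'$ near $x$, \emph{either} $v$ \emph{or} $-v$ lies outside $\Sec_{x'}(M\cap\overline{B(x,r)})$. This splits a punctured neighborhood of $x$ into two open pieces $B_x^+$ and $B_x^-$ according to which sign works, and compactness of $M$ gives the finite cover.

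Your approach replaces this $\pm$ trick by enlarging the parameter space to the unit normal bundle $SNM$. Once the normal direction $w$ is part of the data, the dichotomy disappears: you can write down the limiting obstruction set $K_{p,w}$ explicitly as the closure of $\Sec_p(M\setminus\{p\})$ together with the closed hemisphere $\{v\in S^{n+1}\cap\mathrm{span}(T_pM,w):\langle v,w\rangle\le 0\}$, observe it is $n$-dimensional hence proper, and then run compactness on $SNM$ instead of on $M$. In effect, the paper's two pieces $B_x^{\pm}$ correspond to the two arcs of your normal circle $\{w:(p,w)\in SNM\}$, and your explicit Taylor computation with $\sigma=\|u\|/t$ identifies precisely the hemisphere that the paper only shows is avoided on one side. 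What you gain is an explicit description of $K_{p,w}$ and a cleaner compactness argument; what the paper's approach buys is that it never needs the quantitative second-order remainder control you invoke at the end, since its sequence argument only uses that secant directions of points converging to $x$ accumulate in $T_xM$.
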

\begin{remark}
In general it is impossible for a given point $x\in M$ to find an element $z\in S^{n+1}$ and a neighborhood $U\subset\R^{n+2}$ of $x$, such that
for every $x'\in U$ we have $z\notin\Sec_{x'}(M)$. In fact, the opposite holds. For any $z\in S^{n+1}$ the sequence $x_n=x-\frac{z}{n}$ has
the property that $z\in\Sec_{x_n}(M)$ and $x_n\to x$. This is the main reason why the proof of an apparently obvious lemma is not trivial.

Put differently, the subtlety of the proof of Lemma~\ref{lem:aux2} lies in the fact
that the image $\Sec_x(M)$ can be defined for $x\in M$ as a closure of $\Sec_x(M\setminus\{x\})$, 
but we cannot argue that $\Sec_x(M)$ depends continuously on $x$, if $x\in M$.
\end{remark}
\begin{proof}[Proof of Lemma~\ref{lem:aux2}]
Take a point $x\in M$. Let $V$ be the affine subspace tangent to $M$ at $x$, that is $V=x+T_xM$. The image
$\Sec_x(V\setminus \{x\})$ is the intersection
\[S_x:=T_xM\cap S^{n+1}.\]
\begin{lemma}\label{lem:neartangent}
For any open subset $U\subset S^{n+1}$ containing $S_x$, there exists $r>0$ such that the $\Sec_x(M\cap B(x,r)\setminus\{x\})$ is contained in $U$.
\end{lemma}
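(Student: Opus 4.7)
The plan is to argue by contradiction using the first-order smoothness of $M$ at $x$. Suppose for contradiction that the conclusion fails: there is an open $U \subset S^{n+1}$ containing $S_x$ and a sequence $y_n \in M \setminus \{x\}$ with $y_n \to x$ and $\Sec_x(y_n) \in K := S^{n+1} \setminus U$ for every $n$. The set $K$ is closed and hence compact.

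First, I would fix a smooth local parametrization of $M$ near $x$. Since $M$ is smooth of dimension $n$ at $x$, there exist a neighborhood $W \subset T_xM$ of $0$ and a smooth embedding $\phi : W \to \R^{n+2}$ with $\phi(0) = x$ whose image is an open neighborhood of $x$ in $M$, and such that $d\phi(0) : T_xM \to \R^{n+2}$ is the canonical inclusion $T_xM \hookrightarrow \R^{n+2}$. For $v \in W$ close to $0$ we may therefore write
\[
\phi(v) - x = v + R(v), \qquad \|R(v)\| = o(\|v\|) \text{ as } v \to 0.
\]
For each sufficiently large $n$, write $y_n = \phi(v_n)$ with $v_n \in T_xM$, $v_n \to 0$, $v_n \neq 0$. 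Then
\[
\Sec_x(y_n) = \frac{v_n + R(v_n)}{\|v_n + R(v_n)\|} = \frac{w_n + R(v_n)/\|v_n\|}{\bigl\|w_n + R(v_n)/\|v_n\|\bigr\|},
\qquad w_n := \frac{v_n}{\|v_n\|} \in S_x.
\]

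Next, I would pass to a subsequence: by compactness of the unit sphere in $T_xM$, we may assume $w_n \to w_\infty \in S_x$, and simultaneously (by compactness of $K$) that $\Sec_x(y_n) \to z_\infty \in K$. Since $\|R(v_n)\|/\|v_n\| \to 0$, the displayed formula forces $\Sec_x(y_n) \to w_\infty$, so $z_\infty = w_\infty$. But then $w_\infty \in S_x \subset U$ while $z_\infty \in K = S^{n+1} \setminus U$, a contradiction. This yields the required $r > 0$.

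The main (minor) obstacle is simply the bookkeeping of the Taylor-type expansion $\phi(v)-x = v + o(\|v\|)$: one must be careful that the remainder is controlled \emph{uniformly in direction} as $v \to 0$, which is automatic from the smoothness of $\phi$ and the equality $d\phi(0) = \mathrm{id}$. Once this is in place, the denominators $\|v_n + R(v_n)\|$ are bounded below by $\tfrac12\|v_n\|$ for $n$ large, so the quotient genuinely converges to $w_\infty$. No compactness of $M$ is needed for this local statement; only smoothness at the single point $x$ is used.
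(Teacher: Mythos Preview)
Your proof is correct and follows essentially the same approach as the paper's: both argue by contradiction, extract a sequence $y_n\to x$ with $\Sec_x(y_n)\notin U$, and use that secant directions of a smooth submanifold converge (along subsequences) to unit tangent directions. The paper simply asserts this last fact, whereas you spell it out via a local parametrization and the Taylor expansion $\phi(v)-x=v+o(\|v\|)$; otherwise the arguments are identical.
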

\begin{proof}
Suppose the contrary, that is, for any $n$ there exists a point $y_n\in M$ such that $\|x-y_n\|<\frac{1}{n}$ and $\Sec_x(y_n)\notin U$.
In particular $y_n\to x$. As $M$ is a smooth submanifold of $\R^{n+2}$, the tangent space $T_xM$ is the linear space of limits of secant lines through $x$.
This means that if $y_n\to x$ and $y_n\in M$, then, up to passing to a subsequence, $\Sec_x(y_n)$ converges to a point in $S_x$. But then, starting with
some $n_0>0$, we must have $\Sec_x(y_n)\in U$ for all $n>n_0$. Contradiction.
\end{proof}
Choose now a neighborhood $U$ of $S_x$ and $r$ from Lemma~\ref{lem:neartangent}. As $S_x$ is invariant with respect to the symmetry $y\mapsto -y$,
we may and will assume that $U$ also is. We assume that $U$ is small neighborhood of $S_x$, but in fact we will only need that $U$ is not dense in $S^{n+1}$.
We will need the following technical result.
\begin{lemma}\label{lem:symmetry}
Suppose $v\in S^{n+1}\setminus U$.
Then there exists an open neighborhood $W_v\subset\R^{n+2}$ of $x$, such that if $x'\in W_v$, then either $v\notin \Sec_{x'}(M\cap \ol{B(x,r)})$
or $-v\notin\Sec_{x'}(M\cap \ol{B(x,r)})$.
\end{lemma}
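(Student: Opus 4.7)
The plan is a proof by contradiction combined with a compactness argument. By shrinking the $r$ provided by Lemma~\ref{lem:neartangent}, we may assume the strengthened inclusion $\Sec_x(M \cap \ol{B(x,r)} \setminus \{x\}) \subset U$. Suppose no such neighborhood $W_v$ existed. Then we could extract a sequence $x'_k \to x$ together with points $y_k, y'_k \in M \cap \ol{B(x,r)}$ satisfying
\[\frac{y_k - x'_k}{\|y_k - x'_k\|} = v, \qquad \frac{y'_k - x'_k}{\|y'_k - x'_k\|} = -v,\]
so that $y_k, x'_k, y'_k$ are collinear with $x'_k$ strictly between $y_k$ and $y'_k$; in particular $y_k \neq y'_k$. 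By compactness of $M \cap \ol{B(x,r)}$, after passing to subsequences $y_k \to y$ and $y'_k \to y'$ in $M \cap \ol{B(x,r)}$.

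If $y \neq x$, continuity of the secant map gives $v = \lim \Sec_{x'_k}(y_k) = \Sec_x(y)$, and $y \in M \cap \ol{B(x,r)} \setminus \{x\}$ combined with the strengthened inclusion yields $v \in U$, contradicting $v \notin U$. If $y = x$ but $y' \neq x$, the same argument gives $-v \in U$, hence $v \in U$ by symmetry of $U$, again a contradiction. The genuinely delicate case is $y = y' = x$, where both sequences $y_k, y'_k \subset M$ converge to $x$. Here the collinearity yields
\[y_k - y'_k = (\|y_k - x'_k\| + \|y'_k - x'_k\|)\, v,\]
so $(y_k - y'_k)/\|y_k - y'_k\|$ equals $v$ for every $k$. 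Parametrising $M$ near $x$ by a smooth chart $\phi: \R^n \to M$ with $\phi(0) = x$ and writing $y_k = \phi(t_k)$, $y'_k = \phi(t'_k)$ (with $t_k \neq t'_k$ by injectivity of $\phi$), the fundamental theorem of calculus gives $\phi(t_k) - \phi(t'_k) = (D\phi(0) + o(1))(t_k - t'_k)$. Passing to a further subsequence along which $(t_k - t'_k)/\|t_k - t'_k\|$ converges to some unit vector $s$, the secant direction $(y_k - y'_k)/\|y_k - y'_k\|$ tends to $D\phi(0)s / \|D\phi(0)s\| \in S_x$. Equating this limit with the constant value $v$ forces $v \in S_x \subset U$, the final contradiction.

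The main obstacle, already signalled in the remark preceding the lemma, is precisely this collapsing case: one cannot appeal to continuity of the secant map when both $y_k$ and $y'_k$ degenerate to $x$. The resolution is to exploit the collinearity of $y_k, x'_k, y'_k$ to reinterpret $v$ as the limiting direction of a secant of $M$ joining two distinct points both tending to $x$, which must therefore lie in $T_xM$, that is, in $S_x \subset U$.
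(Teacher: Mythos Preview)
Your proof is correct and follows essentially the same strategy as the paper's: a contradiction argument extracting sequences $y_k, y'_k \to y, y'$ in $M\cap\ol{B(x,r)}$ and splitting into cases according to whether $y, y'$ equal $x$. Your treatment of the degenerate case $y = y' = x$ via a local chart and the fundamental theorem of calculus is more detailed than the paper's, which simply asserts that the line $\{x + tv\}$, being a limit of secant lines of $M$ through $y_k$ and $y'_k$, must be tangent to $M$ at $x$.
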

\begin{proof}[Proof of Lemma~\ref{lem:symmetry}]
We act by contradiction. Assume the statement of the lemma does not hold. That is,
there is a sequence $x_n$ converging to $x$ such that both $v$ and $-v$ belong to $\Sec_{x_n}(M\cap\ol{B(x,r)})$. This means that for any $n$
the line $l_{x_n}:=\{x_n+tv,t\in\R\}$ intersects $M\cap\ol{B(x,r)}$ in at least one point for $t>0$ and at least one point for $t<0$. For each $n$, choose a point $y_{n}^+$ in
$M\cap\ol{B(x,r)}\cap l_{x_n}\cap\{t>0\}$ and a point $y_n^-$ in $M\cap\ol{B(x,r)}\cap l_{x_n}\cap\{t<0\}$. 
In particular $\Sec_{x_n}(y_n^+)=v$ and $\Sec_{x_n}(y_n^-)=-v$.

By taking subsequences of $\{y^+_n\}$ and $\{y^-_n\}$ we can assume that $y_n^+\to y^+$
and $y_n^-\to y^-$ for some $y^+,y^-\in M\cap\ol{B(x,r)}$.

If $y^+\neq x$, then the line $l_x=\{x+tv\}$ passes through $y^+$, but this means that $v\in\Sec_x(M\cap \ol{B(x,r)})$, but the assumption was that $v\notin U$,
so we obtain a contradiction. So $y^+=x$. Analogously we prove that $y^-=x$.

Finally suppose $y^+=y^-=x$. The line $l_x=\{x+tv\}$ is the limit of secant lines passing through $y_n^+$ and $y_n^-$, therefore $l_x$ is tangent
to $M$ at $x$. But then $v\in S_x\subset U$. Contradiction.
\end{proof}
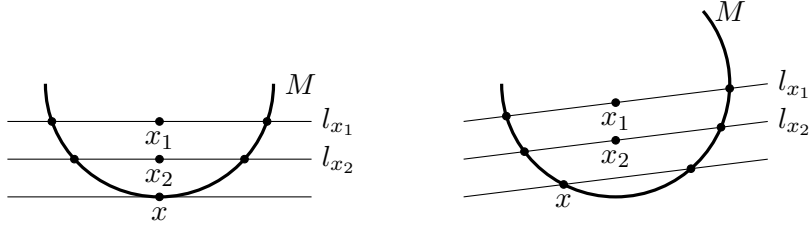
\begin{figure}
\begin{tikzpicture}
\path[name path=M,very thick,draw] (-4,1.5) arc (180:360:1.5cm) node [right] {$M$};
\path[name path=lx1,thin,draw] (-4.5,1) -- ++(4,0) node[right]{$l_{x_1}$};
\path[name path=lx2,thin,draw] (-4.5,0.5) -- ++(4,0) node[right]{$l_{x_2}$};
\path[name path=lx3,thin,draw] (-4.5,0) -- ++(4,0);
\path[name intersections={of=M and lx1,by={v1,v2}}];
\path[name intersections={of=M and lx2,by={v3,v4}}];
\draw[color=black,fill=black] (v1) circle (0.05);
\draw[color=black,fill=black] (v2) circle (0.05);
\draw[color=black,fill=black] (v3) circle (0.05);
\draw[color=black,fill=black] (v4) circle (0.05);
\draw[color=black,fill=black] (-2.5,0) circle (0.05) node[below] {$x$};
\draw[color=black,fill=black] (-2.5,0.5) circle (0.05) node[below] {$x_2$};
\draw[color=black,fill=black] (-2.5,1) circle (0.05) node[below] {$x_1$};
\begin{scope}[xshift=6cm]
\path[name path=aM,very thick,draw] (-4,1.5) arc (180:400:1.5cm) node [right] {$M$};
\path[name path=alx1,thin,draw] (-4.5,1) -- ++(4,0.5) node[right]{$l_{x_1}$};
\path[name path=alx2,thin,draw] (-4.5,0.5) -- ++(4,0.5) node[right]{$l_{x_2}$};
\path[name path=alx3,thin,draw] (-4.5,0) -- ++(4,0.5);
\path[name intersections={of=aM and alx1,by={av1,av2}}];
\path[name intersections={of=aM and alx2,by={av3,av4}}];
\path[name intersections={of=aM and alx3,by={av5,av6}}];
\draw[color=black,fill=black] (av1) circle (0.05);
\draw[color=black,fill=black] (av2) circle (0.05);
\draw[color=black,fill=black] (av3) circle (0.05);
\draw[color=black,fill=black] (av4) circle (0.05);
\draw[color=black,fill=black] (av5) circle (0.05) node[below] {$x$};
\draw[color=black,fill=black] (av6) circle (0.05);
\draw[color=black,fill=black] (-2.5,0.75) circle (0.05) node[below] {$x_2$};
\draw[color=black,fill=black] (-2.5,1.25) circle (0.05) node[below] {$x_1$};
\end{scope}
\end{tikzpicture}
\caption{Proof of the Lemma~\ref{lem:symmetry}. To the left: a sequence of lines $l_{x_1},l_{x_2}$ converges to a line that is tangent
to $M$ at $x$, so that $v\in T_xM$. To the right: a sequence of lines $l_{x_1},l_{x_2}$ converges to a line that passes through $x$
and intersects $M$ at some point. Then $v\in\Sec_x(M)$.}\label{fig:aux2}
\end{figure}
We extend the argument of Lemma~\ref{lem:symmetry} in the following way.
\begin{lemma}\label{lem:symmetry2}
Suppose $v\in S^{n+1}\setminus U$. Then there exist an open set $V_{loc}\subset S^{n+1}$ containing $v$ and
an open ball $B_{loc}\subset B(x,r)$ containing $x$ such that if $x'\in B_{loc}$ then there exists a choice of sign $\eta\in\{\pm 1\}$
possibly depending on $x$ such that $\eta V_{loc}$ is disjoint from the image $\Sec_{x'}(M\cap\ol{B(x,r)})$.
\end{lemma}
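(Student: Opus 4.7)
The plan is to argue by contradiction via sequential compactness, extending the reasoning of Lemma~\ref{lem:symmetry} from a single point $v$ to a whole neighborhood of $v$. Suppose the statement fails for some $v\in S^{n+1}\setminus U$. Let $V_n\subset S^{n+1}$ be the open spherical ball of radius $1/n$ around $v$ and let $B_n\subset\R^{n+2}$ be the open Euclidean ball of radius $1/n$ around $x$; for all sufficiently large $n$ we have $B_n\subset B(x,r)$. By the negation of the conclusion, for each such $n$ there exists $x_n\in B_n$ for which both $V_n\cap\Sec_{x_n}(M\cap\ol{B(x,r)})$ and $(-V_n)\cap\Sec_{x_n}(M\cap\ol{B(x,r)})$ are nonempty; pick $y_n^+,y_n^-\in M\cap\ol{B(x,r)}$ realizing these intersections, so that $\Sec_{x_n}(y_n^+)\to v$ and $\Sec_{x_n}(y_n^-)\to -v$ as $n\to\infty$.

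By compactness of $M\cap\ol{B(x,r)}$, I pass to subsequences so that $y_n^\pm\to y^\pm$ for some $y^\pm\in M\cap\ol{B(x,r)}$, and split into two cases. If $y^+\neq x$, then $\Sec_{x_n}(y_n^+)\to\Sec_x(y^+)$ by continuity of $(x',y')\mapsto\Sec_{x'}(y')$ off the diagonal, forcing $v=\Sec_x(y^+)\in\Sec_x(M\cap B(x,r)\setminus\{x\})\subset U$ by Lemma~\ref{lem:neartangent} and the choice of $r$, contradicting $v\notin U$. Symmetrically, $y^-\neq x$ yields $-v\in U$, and the $(y\mapsto -y)$-symmetry of $U$ arranged just after Lemma~\ref{lem:neartangent} then gives $v\in U$, again a contradiction.

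The remaining case is $y^+=y^-=x$. Write $y_n^\pm=x_n+t_n^\pm w_n^\pm$ with $t_n^\pm=\|y_n^\pm-x_n\|>0$ and $w_n^\pm\in S^{n+1}$; by construction $w_n^+\to v$, $w_n^-\to-v$, and $t_n^\pm\to 0$. For $n$ large, $V_n$ and $-V_n$ are disjoint, so $y_n^+\neq y_n^-$, and the secant chord admits the expansion
\[
y_n^+-y_n^- = t_n^+ w_n^+ - t_n^- w_n^- = (t_n^++t_n^-)\,v + o(t_n^++t_n^-),
\]
from which its direction converges to $v$. Since $y_n^+,y_n^-\in M$ both approach $x$, the general fact that secant directions of a smooth submanifold between points converging to $x$ lie in $T_xM$ gives $v\in T_xM\cap S^{n+1}=S_x\subset U$, a contradiction.

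The main obstacle is precisely this last case: neither of the continuity arguments used before applies when both $y_n^+$ and $y_n^-$ converge to the point $x$ at which $\Sec_x$ is undefined. The key is to combine the two sequences into a genuine chord of $M$ whose direction still converges to $v$, so that the smoothness of $M$ at $x$ forces $v\in T_xM$ and closes the contradiction.
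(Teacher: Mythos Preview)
Your proof is correct and is precisely the modification of the proof of Lemma~\ref{lem:symmetry} that the paper has in mind; the paper itself omits the details and writes only that ``the proof is a modification of the proof of Lemma~\ref{lem:symmetry}''. The only cosmetic point is that in the case $y^+\neq x$ the limit $y^+$ could in principle lie on $\partial B(x,r)$ rather than in the open ball, but this is handled by choosing $r$ slightly smaller than what Lemma~\ref{lem:neartangent} provides, exactly as the paper implicitly does.
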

\begin{proof}
The proof is a modification of the proof of Lemma~\ref{lem:symmetry}. We leave
the details for the reader.
\end{proof}
Resuming the proof of Lemma~\ref{lem:aux2} define the sets $B_{loc+}$ and $B_{loc-}$ by
\[B_{loc\pm}=\{x'\in B_{loc}\colon \textit{for every }v'\in V_{loc}\textit{ we have }\pm v'\notin\Sec_{x'}
(M\cap\ol{B(x,r)})
\}.\]
Then clearly $B_{loc+}\cup B_{loc-}=B_{loc}$.
\begin{lemma}\label{lem:openness}
The subsets $B_{loc+}\setminus M$ and $B_{loc-}\setminus M$ are open subsets of $B_{loc}\setminus M$.
\end{lemma}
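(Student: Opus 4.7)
The plan is to show $B_{loc+}\setminus M$ is open, with $B_{loc-}\setminus M$ following by the antipodal replacement $V_{loc}\mapsto -V_{loc}$. I would proceed by sequential compactness, exploiting joint continuity of the secant map $(x'',y)\mapsto\Sec_{x''}(y)$ on the set $\{x''\neq y\}$ together with the compactness of $M\cap\ol{B(x,r)}$.

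Fix $x'\in B_{loc+}\setminus M$ and, for contradiction, take $x_n'\in(B_{loc}\setminus M)\setminus B_{loc+}$ with $x_n'\to x'$. For each $n$ there are $v_n\in V_{loc}$ and $y_n\in M\cap\ol{B(x,r)}$ with $v_n=\Sec_{x_n'}(y_n)$. Extracting a subsequence, $y_n\to y^*\in M\cap\ol{B(x,r)}$; since $x'\notin M$ and $y^*\in M$ we have $x'\neq y^*$, so continuity of $\Sec$ at $(x',y^*)$ yields $v_n\to v^*:=\Sec_{x'}(y^*)$. Hence $v^*\in\ol{V_{loc}}\cap\Sec_{x'}(M\cap\ol{B(x,r)})$, but since $x'\in B_{loc+}$ forces $V_{loc}\cap\Sec_{x'}(M\cap\ol{B(x,r)})=\emptyset$, the only obstruction to a contradiction is that $v^*\in\partial V_{loc}$.

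To eliminate this boundary case I expect one must sharpen Lemma~\ref{lem:symmetry2}, replacing the conclusion ``$\pm V_{loc}$ disjoint from $\Sec_{x''}(M\cap\ol{B(x,r)})$'' with ``$\pm\ol{V_{loc}}$ disjoint''; this is possible because the compactness/contradiction mechanism of Lemmas~\ref{lem:symmetry} and~\ref{lem:symmetry2} naturally passes to closures (any convergent sequence extracted from $V_{loc}$ has its limit in $\ol{V_{loc}}$, and one can shrink $V_{loc}$ by a nested neighborhood). With the sharpened conclusion, $x'\in B_{loc+}$ gives $\ol{V_{loc}}\cap\Sec_{x'}(M\cap\ol{B(x,r)})=\emptyset$, contradicting $v^*\in\ol{V_{loc}}$. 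The main obstacle is precisely this handling of $\partial V_{loc}$: a plain upper-semicontinuity argument for the set-valued map $x''\mapsto\Sec_{x''}(M\cap\ol{B(x,r)})$ yields openness only on the locus where $\ol{V_{loc}}$ (not merely $V_{loc}$) avoids the image set, so the strengthened form of Lemma~\ref{lem:symmetry2} is exactly what identifies this locus with $B_{loc+}\setminus M$.
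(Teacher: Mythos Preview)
Your approach is the same continuity idea the paper invokes—its entire proof is the one sentence ``the lemma follows from the fact that $M\cap\ol{B(x,r)}$ is closed and $\Sec_{x'}$ is continuous with respect to $x'$ as long as $x'\notin M$''—but you have correctly spotted a gap that this one-liner does not address. Taken at face value, compactness of $M\cap\ol{B(x,r)}$ together with joint continuity of $(x',y)\mapsto\Sec_{x'}(y)$ shows that the condition ``$\Sec_{x'}(M\cap\ol{B(x,r)})$ lies in the closed set $S^{n+1}\setminus V_{loc}$'' is a \emph{closed} condition on $x'$, not an open one; the obstruction is precisely the boundary case $v^*\in\partial V_{loc}$ that you isolate.

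Your repair is the right one and costs nothing: replace $V_{loc}$ by any open $V'$ with $v\in V'\subset\ol{V'}\subset V_{loc}$. The conclusion of Lemma~\ref{lem:symmetry2} for $V_{loc}$ immediately yields the same conclusion for $\ol{V'}$, so redefining $B_{loc\pm}$ via $\ol{V'}$ still gives $B_{loc+}\cup B_{loc-}=B_{loc}$, while now the defining condition reads ``$\Sec_{x'}(M\cap\ol{B(x,r)})\subset S^{n+1}\setminus\ol{V'}$,'' which is an open condition on $x'$ since $S^{n+1}\setminus\ol{V'}$ is open and the image set is compact. Your sequential argument then closes cleanly, and nothing downstream in the Separation Lemma is affected by this shrinking.
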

\begin{proof}
The lemma follows from the fact that $M\cap\ol{B(x,r)}$ is closed and $\Sec_{x'}$ is continuous with respect to $x'$ as
long as $x'\notin M$.
\end{proof}

The next step in the proof of the Separation Lemma~\ref{lem:aux2} is the following.
\begin{lemma}
There exists an open set $B_x\subset\R^{n+2}$ containing $x$, a point $v_x\in S^{n+1}$ and an open set $V_x\subset S^{n+1}$ containing $v_x$
such that if $x'\in B_x$ and $v'\in V_x$ then either $v'$ or $-v'$ does not belong to $\Sec_{x'}(M)$.

Moreover, there are two subsets $B_x^{\pm}$
of $B_x$ such that $B_x^+\cup B_x^-=B_x$, $B_x^\pm$ are open in $B_x\setminus M$ and if $x'\in B_x^\pm$ and $v'\in V_x$, then
$\Sec_{x'}(M)$ does not contain $\pm v'$.
\end{lemma}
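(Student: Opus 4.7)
The plan is to combine the local result of Lemma~\ref{lem:symmetry2} with a separate argument that controls the contribution from the part of $M$ far from $x$. Start with the neighborhood $U$ of $S_x$ and the radius $r$ provided by Lemma~\ref{lem:neartangent}, pick any $v\in S^{n+1}\setminus U$, and apply Lemma~\ref{lem:symmetry2} to obtain an open neighborhood $V_{loc}\subset S^{n+1}$ of $v$ and an open ball $B_{loc}\subset B(x,r)$ around $x$ so that for every $x'\in B_{loc}$, either $v'\notin\Sec_{x'}(M\cap\overline{B(x,r)})$ or $-v'\notin\Sec_{x'}(M\cap\overline{B(x,r)})$ for every $v'\in V_{loc}$. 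Denote by $B_{loc\pm}$ the two subsets produced by Lemma~\ref{lem:openness}.

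Next focus on the ``far'' piece $M_{\textup{far}}:=M\setminus B(x,r)$. Since $M_{\textup{far}}$ is compact and disjoint from $x$, the map $\Sec_x$ is smooth on it, so $\Sec_x(M_{\textup{far}})$ is a compact subset of $S^{n+1}$ of topological dimension at most $n$. Its union with its antipode $-\Sec_x(M_{\textup{far}})$ is still compact and nowhere dense in $S^{n+1}$, hence the open set $V_{loc}$ contains a point outside this union. Pick such a point as $v_x$; then both $v_x$ and $-v_x$ lie at a positive distance, call it $d>0$, from the compact set $\Sec_x(M_{\textup{far}})$.

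Now apply a uniform continuity argument. The map
\[
\Sec \colon \overline{B(x,r/2)}\times M_{\textup{far}}\to S^{n+1},\qquad (x',y)\mapsto \frac{y-x'}{\|y-x'\|},
\]
is continuous, and since $\|y-x'\|\ge r/2$ on its domain it is uniformly continuous. Hence there exists a neighborhood $B_x\subset B_{loc}\cap B(x,r/2)$ of $x$ such that for every $x'\in B_x$ and every $y\in M_{\textup{far}}$, $\|\Sec_{x'}(y)-\Sec_x(y)\|<d/2$. Shrinking $V_{loc}$ to the open ball $V_x$ of radius $d/4$ around $v_x$ (intersected with $V_{loc}$) then guarantees $(V_x\cup -V_x)\cap \Sec_{x'}(M_{\textup{far}})=\emptyset$ for every $x'\in B_x$.

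Finally combine the two pieces. For any $x'\in B_x$ and $v'\in V_x$, Lemma~\ref{lem:symmetry2} ensures that $v'$ or $-v'$ is missing from $\Sec_{x'}(M\cap\overline{B(x,r)})$, while the construction of $V_x$ and $B_x$ ensures that both $v'$ and $-v'$ are missing from $\Sec_{x'}(M_{\textup{far}})$; hence either $v'\notin\Sec_{x'}(M)$ or $-v'\notin\Sec_{x'}(M)$. Setting $B_x^{\pm}:=B_x\cap B_{loc\pm}$ yields $B_x^+\cup B_x^-=B_x$, openness in $B_x\setminus M$ from Lemma~\ref{lem:openness}, and the sign-refined statement that $\pm v'\notin\Sec_{x'}(M)$ for all $x'\in B_x^{\pm}$ and $v'\in V_x$. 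The main obstacle is the selection of $v_x$: one must verify that the closed set $\pm\Sec_x(M_{\textup{far}})$ is genuinely nowhere dense in $S^{n+1}$, which uses $\dim M=n<n+1$ together with smoothness of $\Sec_x|_{M_{\textup{far}}}$ (equivalently, Sard's theorem). Everything else is a straightforward compactness and continuity argument.
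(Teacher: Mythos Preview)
Your argument is correct and follows essentially the same strategy as the paper: split $M$ into the near piece $M\cap\overline{B(x,r)}$ handled by Lemma~\ref{lem:symmetry2} and the far piece $M\setminus B(x,r)$ handled by a dimension-count plus continuity, then intersect the resulting neighborhoods. The only cosmetic difference is the order of choices: the paper first selects $v$ simultaneously avoiding $U$ and $\pm\Sec_x(M\setminus B(x,r))$ and then applies Lemma~\ref{lem:symmetry2} to that $v$, whereas you first apply Lemma~\ref{lem:symmetry2} to an arbitrary $v\notin U$ and afterwards pick $v_x$ inside the resulting $V_{loc}$ avoiding $\pm\Sec_x(M_{\textup{far}})$; both orderings work since $\pm\Sec_x(M_{\textup{far}})$ is closed with empty interior.
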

\begin{proof}
Let $U$ and $r>0$
be as in the statement of Lemma~\ref{lem:neartangent}. The set $\Sec_x(M\setminus B(x,r))$ is the image of the compact manifold $M\setminus B(x,r)$
of dimension $n$ under a smooth map, hence its interior is empty. Therefore there exists a point $v\in S^{n+1}$ such that
neither $v$ nor $-v$ is in the image $\Sec_x(M\setminus B(x,r))$ and also neither $v$ nor $-v$ is in $U$. By the continuity of $\Sec_x$, there exist
a small ball $B_{gl}\subset B(x,r)$ with center $x$ and a small ball $V_{gl}\subset S^{n+1}$ containing $v$ such that
if $v'\in V_{gl}$ and $x'\in B_{gl}$, then neither $v'$ nor $-v'$ belongs to $\Sec_{x'}(M\setminus B(x,r))$.
Let $V_{loc}$ and $B_{loc}$ be from Lemma~\ref{lem:symmetry2}. Define $V_x=V_{loc}\cap V_{gl}$ and $B_x=B_{loc}\cap B_{gl}$. Then for any $x'\in B_x$
and $v'\in V_x$ we have that either $v'\notin\Sec_{x'}M$ or $-v\notin\Sec_{x'}$.

We define $B_x^\pm$ as intersections of $B_{loc\pm}$ with $B_x$, where $B_{loc\pm}$ are as in Lemma~\ref{lem:openness} above.
\end{proof}
We resume the proof of the Separation Lemma~\ref{lem:aux2}. Define
\[\delta_x=\sup_{y\in S^{n+2}\setminus V_x} \langle v_x,y\rangle.\]
As $V_x$ is an open set containing $v_x$, we have $\delta_x<1$. This means that
if $x'\in B_x$ and $y\in\Sec_x(M)$, then either $\langle y,v_x\rangle\le\delta_x$ or $\langle y,-v_x\rangle\le\delta_x$.

Cover now $M$ by open sets $B_x$ for $x\in M$. As $M$ is compact, there exists a finite set $x_1,\ldots,x_n$
such that $M\subset B_{x_1}\cup\dots\cup B_{x_n}$. The compactness of $M$ implies also that there exists
$\varepsilon_0>0$ such that the set $M+B(0,\varepsilon_0)$, that is, the set of points at distance less than $\varepsilon_0$ from $M$,
is contained in $B_{x_1}\cup\dots\cup B_{x_n}$.
Define $D=\max(\delta_{x_1},\ldots,\delta_{x_n})$ and let $N_0=(M+B(0,\varepsilon_0))\setminus M$. For $i=1,\ldots,n$ define
$v_i=v_{x_i}$ and $B_i^\pm=B_{x_i}^\pm\cap N_0$. Then $B_i^\pm$ cover $N_0$ and for any $i$, if $x'\in B_i^\pm$ and $y\in\Sec_{x'}(M)$,
then $\langle y,\pm v_i\rangle\le D$ as desired.
\end{proof}

For points $x$ that are at distance greater than $\varepsilon_0$ from $M$, the statement of the Separation Lemma~\ref{lem:aux2} holds as well.
\begin{theorem}[Separation Theorem]\label{thm:aux2}
There exists a constant $D<1$ such that for any $x\in \R^{n+2}\setminus M$ there exists an open neighborhood $U_x\subset\R^{n+2}$ of $x$
and a point $z\in S^{n+2}$, such that for any $x'\in U_x$ and $y\in\Sec_{x'}(M)$ we have $\langle y,z\rangle<D$.
\end{theorem}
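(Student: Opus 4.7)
The plan is to combine the Separation Lemma~\ref{lem:aux2}, which handles points close to $M$, with a compactness argument for the remaining points. Let $\varepsilon_0 > 0$ and $D_0 < 1$ be the constants furnished by Lemma~\ref{lem:aux2}; for $x \in N_0 = (M+B(0,\varepsilon_0))\setminus M$, that lemma directly supplies one of its covering sets $U_i \ni x$ and a point $z_i\in S^{n+1}$ with the desired separation bound $D_0$. It therefore remains to deal with $x$ in the closed set $K := \{x \in \R^{n+2} : \dist(x,M) \ge \varepsilon_0\}$, which I split into a far-field part (where $z$ can be written down explicitly) and a bounded part (where I use compactness).

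For the far-field, set $R_M := \max_{y\in M}\|y\|$ and suppose $\|x\|\ge 3R_M$. Since $\|y\|\le R_M \le \|x\|/3$ for every $y \in M$, Cauchy--Schwarz gives
\[
\left\langle \frac{y-x}{\|y-x\|},\frac{x}{\|x\|}\right\rangle
=\frac{\langle y,x\rangle-\|x\|^2}{\|y-x\|\,\|x\|}
\le \frac{R_M\|x\|-\|x\|^2}{(\|x\|+R_M)\|x\|}\le -\tfrac12.
\]
Thus $z := x/\|x\|$ achieves separation bound $-1/2$ at $x$, and by continuity the same $z$ keeps working on a small open neighborhood $U_x$ of $x$ with bound, say, $-1/4$.

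For the compact set $K_0 := K \cap \overline{B(0,3R_M)}$, I would argue by a finite open cover. Fix $x \in K_0$. The image $\Sec_x(M) \subset S^{n+1}$ is compact and of topological dimension at most $n < n+1$, so its complement in $S^{n+1}$ is nonempty and open; pick any $z_x$ there and set $D_x := \sup_{y \in \Sec_x(M)}\langle y, z_x\rangle < 1$. Because $\dist(x,M) \ge \varepsilon_0 > 0$, the map $(x',y) \mapsto \Sec_{x'}(y)$ is jointly continuous on a neighborhood of $\{x\}\times M$, and compactness of $M$ then produces a neighborhood $U_x$ of $x$ on which $\langle \Sec_{x'}(y), z_x\rangle \le (1+D_x)/2 < 1$ for all $x' \in U_x$ and all $y \in M$. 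A finite subcover $U_{x_1},\ldots,U_{x_m}$ of $K_0$ supplies a uniform bound $D_1 := \max_i (1+D_{x_i})/2 < 1$ on this region.

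Setting $D := \max(D_0, D_1, -1/4) < 1$ and, for any given $x \in \R^{n+2}\setminus M$, selecting $(z,U_x)$ from the regime to which $x$ belongs, yields the theorem. The genuine obstacle of the statement is absorbed by the Separation Lemma~\ref{lem:aux2}; the two cases handled here are routine compactness and continuity arguments, which become available precisely because on $K$ the map $\Sec_{x'}$ is smooth jointly in $(x',y)$.
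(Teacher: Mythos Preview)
Your proof is correct and follows essentially the same three-regime decomposition as the paper: invoke the Separation Lemma near $M$, use $z=x/\|x\|$ in the far field, and run a finite-subcover argument on the remaining compact annulus. The only cosmetic differences are your explicit far-field constant (the paper just observes $\langle y,z\rangle\le 0$ and takes $D_{far}=\tfrac{1}{10}$ after shrinking the neighborhood) and your choice of outer radius $3R_M$ versus the paper's unspecified $R$ with $M\subset B(0,R)$.
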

\begin{proof}
Denote by $D_{close}$ the constant $D$ from the Separation Lemma~\ref{lem:aux2}. The constant $D_{close}$ works for points at distance less than $\varepsilon_0$
from $M$.

We work with points far from $M$.
Choose $R>0$ large enough so that $M\subset B(0,R)$. For any $x\notin B(0,R)$ we can take the point $\frac{x}{\|x\|}$ for $z$
and then if $y\in\Sec_x(M)$, then $\langle y,z\rangle\le 0$. By the continuity of $x\mapsto\Sec_x$ we may choose a neighborhood
$W_x$ of $x$ such that if $x'\in W_x$ and $y_n\in \Sec_{x'}(M)$, then $\langle y,z\rangle$ is bounded from above by a small positive number, say $\frac{1}{10}$.
This takes care
of the exterior of the ball $B(0,R)$. We define $D_{far}=\frac{1}{10}$. The constant $D_{far}$ works for points outside of the ball $B(0,R)$. 

Let $P=\{x\in \ol{{B}(0,R)}\colon \dist(x,M)\ge \varepsilon_0\}$.
For any point $x\in P$, $\Sec_x(M)$ is the image of an $n$-dimensional compact manifold under a smooth map, so it is a
boundary closed subset of $S^{n+1}$. Thus there exist a point $z_x\in S^{n+1}$ and a neighborhood of $U_x$ of $z_x$ such that
$U_x\cap\Sec_x(M)=\emptyset$. Shrinking $U_x$ if necessary we may guarantee that there exists a neighborhood $W_x\subset\R^{n+2}$ of $x$ such
that if $x'\in W_x$ and $y\in\Sec_{x'}(M)$, then $y\notin U_x$. We define again
\[\delta_x=\sup_{y\in S^{n+2}\setminus V_x}\langle z_x,y\rangle<1.\]

The sets $W_x$ cover $P$ and we take a finite subcover $W_{x_1},\ldots,W_{x_M}$. We define $D_{mid}$ as the maximum of $\delta_{x_1},\ldots,\delta_{x_M}$.
The constant $D_{mid}$ works for points at distance between $\varepsilon_0$ and inside of $B(0,R)$.

It is enough to take $D=\max(D_{close},D_{mid},D_{far})$.
\end{proof}

From now on we assume that $D<1$ is fixed.

\subsection{The Drilled Ball Lemma}
We begin to bound the value of $\frac{\partial}{\partial x_i}\map(x)$. To this end we will differentiate the coefficients
of $\Sec_x^*\eta_z$. The point $z$ will always be chosen in such a way that $\langle \Sec_{x'}(y),z\rangle<D$ for all
$y\in M$ and for all $x'$ sufficiently close to $x$.

The next result estimates the contribution to $\frac{\partial\map}{\partial x_j}$ from integrating $\Sec_x^*\eta$ on a drilled ball.
\begin{lemma}\label{lem:importantestimate}
Suppose $\alpha,\beta\in(0,1)$ and $x\in \R^{n+2}$.
Fix $\varepsilon>0$ and  define $$M_{\alpha\beta\varepsilon} := M\cap (B(x,\varepsilon^\beta)\setminus B(x,\varepsilon^\alpha)).$$
Then, for any $i=1,\ldots,n+2$
\[\left|\frac{\partial}{\partial x_i}\int_{M_{\alpha\beta\varepsilon}} \Sec_x^*\eta\right|<C_{drill}\varepsilon^\gamma,\]
where $\gamma=n\beta-(n+1)\alpha$ and $C_{drill}=C_MC^D_{n,1}$ is independent of $\alpha$,  $\beta$ and $x$.
\end{lemma}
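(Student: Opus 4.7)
The plan is to bound the integral in a routine way by combining a pointwise bound on the coefficients of the $n$-form $\frac{\partial}{\partial x_i}\Sec_x^*\eta$ (valid because $\|y-x\|\ge\varepsilon^\alpha$ throughout the annulus) with a volume bound on the domain $M_{\alpha\beta\varepsilon}$ (which sits inside $M\cap B(x,\varepsilon^\beta)$, hence has $n$-dimensional measure at most $C_M\varepsilon^{n\beta}$). The two ingredients multiply to give the required $\varepsilon^{n\beta-(n+1)\alpha}$.

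\emph{Step 1: Choose a separating direction.} Apply the Separation Theorem \ref{thm:aux2} at $x$ to produce an open neighborhood $U_x\ni x$, a point $z\in S^{n+1}$, and a constant $D<1$ such that $\langle\Sec_{x'}(y),z\rangle<D$ for every $y\in M$ and every $x'\in U_x$. Work with the $n$-form $\eta=\eta_z$ from Section \ref{sec:pullback}; this form is smooth on $S^{n+1}\setminus\{z\}$, so $\Sec_{x'}^*\eta$ is smooth on $M$ throughout the neighborhood $U_x$ and depends smoothly on $x'\in U_x$.

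\emph{Step 2: Pointwise bound and differentiation under the integral.} Since $M_{\alpha\beta\varepsilon}$ is compact and disjoint from $x$, standard differentiation under the integral sign turns the derivative of the integral into the integral of the derivative. Applying Lemma \ref{lem:mainbound} with $m=1$, every coefficient of $\frac{\partial}{\partial x_i}\Sec_x^*\eta_z$ is bounded above in absolute value by $C^D_{n,1}\|y-x\|^{-(n+1)}$. For $y\in M_{\alpha\beta\varepsilon}$ we have $\|y-x\|\ge\varepsilon^\alpha$, hence these coefficients are bounded by $C^D_{n,1}\varepsilon^{-(n+1)\alpha}$.

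\emph{Step 3: Volume bound and conclusion.} Because $M_{\alpha\beta\varepsilon}\subset M\cap B(x,\varepsilon^\beta)$ and $M$ is a smooth compact $n$-dimensional submanifold, the second half of Proposition~\ref{prop:aux1} (applied with $X=M$, $r=\varepsilon^\beta$, $k=n$, and $T=C^D_{n,1}\varepsilon^{-(n+1)\alpha}$) yields
\[
\left|\int_{M_{\alpha\beta\varepsilon}}\frac{\partial}{\partial x_i}\Sec_x^*\eta_z\right|\le C_M\cdot C^D_{n,1}\varepsilon^{-(n+1)\alpha}\cdot\varepsilon^{n\beta}= C_MC^D_{n,1}\,\varepsilon^{n\beta-(n+1)\alpha},
\]
so one can take $C_{drill}:=C_MC^D_{n,1}$ and $\gamma:=n\beta-(n+1)\alpha$. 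Neither constant depends on $\alpha$, $\beta$, or $x$: $C_M$ is intrinsic to $M$, and $C^D_{n,1}$ depends only on the fixed separation constant $D$ (which the Separation Theorem provides uniformly in $x$) and on $n$.

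The only genuine difficulty is getting a single separation constant $D<1$ that works \emph{uniformly} as $x$ ranges over $\R^{n+2}\setminus M$; this is exactly what Theorem \ref{thm:aux2} delivers, and without it the bound from Lemma \ref{lem:mainbound} would deteriorate as $\Sec_x(M)$ approached $z$. Once $D$ is in hand, the rest is simply multiplying the pointwise bound by the volume, so the proof is essentially bookkeeping.
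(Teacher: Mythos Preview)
Your proof is correct and essentially identical to the paper's: both combine the coefficient bound from Lemma~\ref{lem:mainbound} (enabled by the uniform separation constant $D$) with the volume/integral bound of Proposition~\ref{prop:aux1} to produce $C_M C^D_{n,1}\varepsilon^{n\beta-(n+1)\alpha}$. The only cosmetic difference is that you invoke the Separation Theorem~\ref{thm:aux2} where the paper cites the Separation Lemma~\ref{lem:aux2}, which is harmless since both supply the same uniform $D$.
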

\begin{proof}
By Lemma~\ref{lem:mainbound} and the Separation Lemma~\ref{lem:aux2}, the derivative of the pull-back $\frac{\partial}{\partial x_i}\Sec_x^*\eta$ is an $n$--form
whose coefficients are bounded from above by
by $\frac{C^D_{n,1}}{\|y-x\|^{n+1}}$. If $y\in M_{\alpha\beta\varepsilon}$, then $\|y-x\|\ge\varepsilon^{\alpha}$.
The form $\frac{\partial}{\partial x_i}\Sec_x^*\eta$ is integrated over $M_{\alpha\beta\varepsilon}$. We use Proposition~\ref{prop:aux1} twice.
First to conclude that the  volume of $M_{\alpha\beta\varepsilon}$ is bounded from above by
$C_M\varepsilon^{n\beta}$, second to conclude
that the integral is bounded by $C^D_{n,1}C_M\varepsilon^{n\beta-(n+1)\alpha}$.
\end{proof}

The next result shows that if $M$ is locally parametrized by some $\Psi$ then if we take a first order approximation, the contribution
to the derivative of $\map(x)$ from the local piece does not change much. We need to set up some assumptions.

Choose $\varepsilon>0$ and $\alpha\in(\frac12,1)$. For a fixed point $x$ at distance  $\varepsilon$ from $M$
we set
$M_{\alpha\varepsilon}=M\cap \ol{B(x,\varepsilon^\alpha)}$.
We assume that $\varepsilon,\alpha$ are such that $M_{\alpha\varepsilon}$ can parametrized by 
\[\Psi:B' \to M_{\alpha\varepsilon},\]
where $B'$ is some bounded open subset in $\R^n$ and $\Psi(0)$ is the point on $M_{\alpha\varepsilon}$ that is nearest to $x$.
We also assume that $B'$ is a star-shaped, that is, if $w\in B'$, then $tw$ also in $B'$ for $t\in[0,1]$. 
For simplicity of the formulae we may transform $B'$ in such a way that
\begin{equation}\label{eq:dpsi0}
D\Psi(0)=\begin{pmatrix} 1&0&\ldots&0\\ 0&1&\ldots&0\\ \vdots&\vdots&\ddots&\vdots\\ 0&0&\ldots&1\\ 0&0&\ldots&0\\ 0&0&\ldots&0
\end{pmatrix}.\end{equation}
Choose $\sigma>0$ in such a way that $B'$ is a subset of  an $n$-dimensional ball $B(0,\sigma)$  and
$B'$ is not a subset of $B(0,\sigma/2)$.
Let $\Psi_1$ be the first order approximation of $\Psi$, that is $\Psi_1(w)=\Psi(0)+D\Psi(0)w$.
Let $M_1$ be the image of $B'$ under $\Psi_1$. Write $C_1$ and $C_2$ for the supremum of the first
and second derivative of $\Psi$ on $B'$.

\begin{lemma}[Approximation Lemma]\label{lem:local}
Suppose
\begin{equation}\label{eq:locall}
\varepsilon<\min\left((4C_2)^{-1/\alpha}, 2^{-1/(\alpha-1)},(32C_2)^{-1/(2\alpha-1)}\right)
\end{equation}
and $\alpha\ge \frac12$.
There exists a constant $C_{app}$ depending on $\Psi$ such that
\[\left|\frac{\partial}{\partial x_j}\int_{M_{\alpha\varepsilon}}\Sec_{x}^*\eta-
\frac{\partial}{\partial x_j}\int_{M_1}\Sec_{x}^*\eta\right|\le C_{app}\varepsilon^{\delta},\]
where $\delta=\alpha(n+3)-(n+2)$.
\end{lemma}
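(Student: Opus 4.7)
The plan is to pull both integrals back to the parameter domain $B'$ via $\Psi$ and $\Psi_1$, so that
\[
\frac{\partial}{\partial x_j}\int_{M_{\alpha\varepsilon}}\Sec_x^*\eta - \frac{\partial}{\partial x_j}\int_{M_1}\Sec_x^*\eta = \int_{B'} \bigl(\Psi^*\omega - \Psi_1^*\omega\bigr),
\]
where $\omega := \partial \Sec_x^*\eta/\partial x_j$, and exchanging derivative with integral is legitimate because $\|y-x\| \ge \varepsilon$ on both $M_{\alpha\varepsilon}$ and $M_1$. First I would apply the Separation Theorem~\ref{thm:aux2} to pick $z \in S^{n+1}$ uniformly bounded away from $\Sec_{x'}(M)$ for $x'$ in a neighborhood of $x$, and set $\eta = \eta_z$. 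Writing $\omega = \sum_I f_I(y)\,dy_I$, Lemma~\ref{lem:mainbound} then gives $|f_I(y)| \le C\|y-x\|^{-(n+1)}$ and (applied for $m=2$) $|\nabla_y f_I(y)| \le C\|y-x\|^{-(n+2)}$.

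The key geometric input is that \eqref{eq:dpsi0}, together with the fact that $x - \Psi(0)$ is normal to $T_{\Psi(0)}M$, gives the clean identity $\|\Psi_1(w) - x\|^2 = \varepsilon^2 + |w|^2$. The assumption \eqref{eq:locall}, combined with the Taylor bound $|\Psi(w) - \Psi_1(w)| \le C_2|w|^2/2$, then guarantees that the whole segment joining $\Psi_1(w)$ to $\Psi(w)$ lies at distance at least $\tfrac12 \sqrt{|w|^2+\varepsilon^2}$ from $x$. Writing $\Psi^*\omega - \Psi_1^*\omega$ term by term, applying the mean value theorem to the value difference $f_I(\Psi(w)) - f_I(\Psi_1(w))$, and using a direct Taylor bound for the Jacobian minors (note that for $I \ne \{1,\ldots,n\}$ the shape \eqref{eq:dpsi0} forces $J_I(\Psi_1) \equiv 0$ while $J_I(\Psi)(w) = O(|w|)$), I expect a pointwise estimate of the form
\[
\bigl|(\Psi^* - \Psi_1^*)\omega\bigr|(w) \le C\bigl(|w|^2(|w|^2+\varepsilon^2)^{-(n+2)/2} + |w|(|w|^2+\varepsilon^2)^{-(n+1)/2}\bigr).
\]

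To conclude, integrate over $B' \subset B(0,\sigma)$ with $\sigma \le \varepsilon^\alpha$; in polar coordinates this reduces to elementary integrals of the form $\int_0^\sigma r^a(r^2+\varepsilon^2)^{-b/2}\,dr$, which are handled by splitting at $r = \varepsilon$. The main obstacle is to obtain the sharp exponent $\delta = \alpha(n+3) - (n+2)$: a naive bound using only $|w| \le \sigma$ and $\|y-x\| \ge \varepsilon$ would yield $\varepsilon^{\alpha(n+2) - (n+2)}$, missing one factor of $\varepsilon^\alpha$. That missing factor should be recovered by pulling one $|w|$ out as $\sigma \le \varepsilon^\alpha$ while integrating the remaining weight $|w|^{a-1}(|w|^2+\varepsilon^2)^{-b/2}$ against the polar measure $r^{n-1}\,dr$, carefully balancing the near regime $r \le \varepsilon$, where the kernel is $\sim \varepsilon^{-b}$ and the $r^{n-1}$ weight dominates, against the far regime $\varepsilon \le r \le \sigma$, where the decay $r^{-b}$ takes over. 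This exponent bookkeeping in the two contributions displayed above is the most delicate part of the argument.
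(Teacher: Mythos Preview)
Your framework is exactly the paper's: pull both integrals back to $B'$, write $\omega=\partial_{x_j}\Sec_x^*\eta=\sum_I F_I\,dy_I$, apply the mean value theorem to $F_I(x,\Psi(w))-F_I(x,\Psi_1(w))$ using the second--derivative bound of Lemma~\ref{lem:mainbound}, bound the Jacobian--minor difference by $C\|w\|$, and integrate. The paper also opens with a preliminary sublemma establishing $\sigma\le 4\varepsilon^\alpha$ (your claim $\sigma\le\varepsilon^\alpha$ should carry this constant).

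The one substantive difference is the lower bound on $\|y'-x\|$ along the segment from $\Psi_1(w)$ to $\Psi(w)$. The paper is content with the crude $\|y'-x\|\ge\tfrac12\varepsilon$, obtained from $\|\Psi(w)-\Psi_1(w)\|\le C_2\|w\|^2\le\tfrac12\varepsilon$ under \eqref{eq:locall}; this yields the pointwise bound $C(\|w\|^2\varepsilon^{-(n+2)}+\|w\|\varepsilon^{-(n+1)})$, and the paper then simply evaluates $\int_{B(0,\sigma)}\|w\|^k\,dw\sim\sigma^{n+k}$ with no near/far split. You are right that this elementary route delivers only the exponent $(n+2)\alpha-(n+2)$; the paper records $(n+3)\alpha-(n+2)$, which appears to be an arithmetic slip (as if $B'\subset\R^{n+1}$ rather than $\R^n$). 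Your sharper bound $\|y'-x\|\gtrsim\sqrt{|w|^2+\varepsilon^2}$ is correct, and carried through with the split at $r=\varepsilon$ the two integrals $\int_0^\sigma r^{n+1}(r^2+\varepsilon^2)^{-(n+2)/2}dr$ and $\int_0^\sigma r^{n}(r^2+\varepsilon^2)^{-(n+1)/2}dr$ are each $O(\log(1/\varepsilon))$ --- much stronger than any negative--exponent power and in particular ample for the only use of the lemma (Theorem~\ref{thm:maintechnical}, where one needs $\le C\varepsilon^{-\theta}$ with $\theta>0$). So your concern about the exponent bookkeeping is well founded and your refinement genuinely repairs it; the ``pull one $|w|$ out'' heuristic you sketch is a bit imprecise, but the direct computation of those radial integrals gives the logarithm cleanly.
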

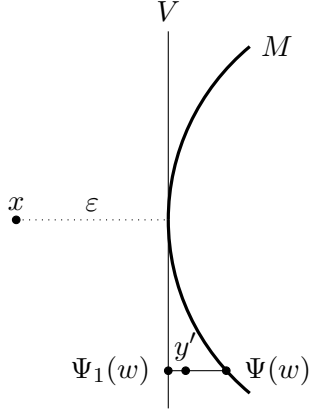
\begin{figure}
\begin{tikzpicture}
\draw[name path=newV](0,-2.5) -- (0,2.5) node[above] {$V$};
\path[name path=newM,draw,very thick](0,0) arc (180:130:3cm) node[right] {$M$};
\path[name path=newM2,draw,very thick](0,0) arc (180:230:3cm);
\draw[fill=black](-2,0) circle (0.05) node[above]{$x$};
\path[name path=newL] (-3,-2) -- (4,-2);
\path[name intersections={of=newV and newL,by={w1}}];
\path[name intersections={of=newM2 and newL,by={w2}}];
\draw[fill=black](w1) circle(0.05) node[left=1mm]{$\Psi_1(w)$};
\draw[fill=black](w2) circle(0.05) node[right=1mm]{$\Psi(w)$};
\draw[thin] (w1)--(w2);
\draw[fill=black] ($(w1)!0.3!(w2)$) circle (0.05) node[above]{$y'$};
\draw[thin,dotted] (-2,0) -- (0,0);
\draw(-1,0.2) node {$\varepsilon$};
\end{tikzpicture}
\caption{Notation of the proof of the Approximation Lemma~\ref{lem:local}.}
\end{figure}
\begin{proof}
As our first step we relate
$\sigma$ with $\varepsilon$ and $\alpha$.
\begin{lemma}\label{lem:sigma}
We have $\sigma\le 4\varepsilon^\alpha$.
\end{lemma}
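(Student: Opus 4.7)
The strategy is to bound $\sup_{w \in B'} \|w\|$ by $O(\varepsilon^\alpha)$ directly from the Taylor expansion of $\Psi$ at the origin, then invoke the two-sided property of $\sigma$. First, for any $w \in B'$ the triangle inequality gives
\[
\|\Psi(w) - \Psi(0)\| \le \|\Psi(w) - x\| + \|x - \Psi(0)\| \le \varepsilon^\alpha + \varepsilon \le 2\varepsilon^\alpha,
\]
where the smallness hypothesis $\varepsilon < 2^{-1/(\alpha-1)}$ (which, since $\alpha < 1$, forces $\varepsilon \le \varepsilon^\alpha$) is exactly what is needed in the last step. Next, Taylor's theorem writes $\Psi(w) = \Psi(0) + D\Psi(0)w + R(w)$ with $\|R(w)\| \le (C_2/2)\|w\|^2$, and since $D\Psi(0)$ is the partial isometry specified in \eqref{eq:dpsi0}, we have $\|D\Psi(0)w\| = \|w\|$. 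Combining, one obtains
\[
\|w\|\left(1 - \tfrac{C_2}{2}\|w\|\right) \le \|\Psi(w) - \Psi(0)\| \le 2\varepsilon^\alpha.
\]

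The next step is a case split on the size of $\|w\|$. If $\|w\| \le 1/C_2$ then the factor $\bigl(1 - (C_2/2)\|w\|\bigr)$ is at least $1/2$ and we immediately conclude $\|w\| \le 4\varepsilon^\alpha$. Conversely, if some $w \in B'$ had $\|w\| > 1/C_2$, star-shapedness of $B'$ would place $w' := w/(C_2\|w\|)$ in $B'$ with $\|w'\| = 1/C_2$; then applying the previous bound to $w'$ would yield $1/C_2 \le 4\varepsilon^\alpha$, i.e.\ $\varepsilon^\alpha \ge 1/(4C_2)$, directly contradicting the hypothesis $\varepsilon < (4C_2)^{-1/\alpha}$. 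Consequently \emph{every} $w \in B'$ satisfies $\|w\| \le 4\varepsilon^\alpha$, and the inclusion $B' \subset B(0, 4\varepsilon^\alpha)$ forces $\sigma \le 4\varepsilon^\alpha$ (the sharper bound coming, if necessary, from the orthogonality of $\Psi(0)-x$ to $T_{\Psi(0)}M$, which refines the crude Step~1 estimate to $\|\Psi(w)-\Psi(0)\|^2 \le \varepsilon^{2\alpha} - \varepsilon^2 + O(\|w\|^2\varepsilon)$).

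The main obstacle is the coupling between the linear and quadratic parts of the Taylor expansion: a naive bound would only give control when $\|w\|$ is a priori small, whereas we a priori only know $w \in B'$. The role of star-shapedness is precisely to reduce the ``large $\|w\|$'' case to the tractable one by rescaling along the segment from $0$ to $w$, and the role of the smallness hypothesis $\varepsilon < (4C_2)^{-1/\alpha}$ is to ensure that this rescaling produces a contradiction rather than just a weaker estimate. The somewhat baroque form of the triple condition on $\varepsilon$ in \eqref{eq:locall} is tuned so that the linear Taylor approximation stays legitimate throughout $B'$, keeping the proof essentially one line once the setup is done.
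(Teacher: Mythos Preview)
Your argument is correct and rests on the same ingredients as the paper's: the triangle inequality to bound $\|\Psi(w)-\Psi(0)\|\le\varepsilon^\alpha+\varepsilon$, the Taylor expansion combined with $\|D\Psi(0)w\|=\|w\|$, and star-shapedness to control the quadratic remainder. The paper packages these slightly differently: rather than bounding $\|w\|$ for $w\in B'$ directly, it fixes a unit direction $e$, projects $\Psi(te)$ onto $e$ to get a scalar function, and shows that at $t_0=2\varepsilon^\alpha$ the image lies outside $B(x,\varepsilon^\alpha)$, hence $t_0e\notin B'$; connectedness then gives $B'\subset B(0,2\varepsilon^\alpha)$. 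Your contrapositive route via a global Taylor bound and a case split on $\|w\|$ versus $1/C_2$ is arguably cleaner, though it yields the slightly weaker inclusion $B'\subset B(0,4\varepsilon^\alpha)$.

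One minor slip in the last line: from $B'\subset B(0,4\varepsilon^\alpha)$ and the paper's definition of $\sigma$ (namely $B'\subset B(0,\sigma)$ \emph{and} $B'\not\subset B(0,\sigma/2)$) you only get $\sigma/2\le 4\varepsilon^\alpha$, i.e.\ $\sigma\le 8\varepsilon^\alpha$, not $\sigma\le 4\varepsilon^\alpha$. The paper reaches $\sigma\le 4\varepsilon^\alpha$ precisely because it first establishes the sharper $B'\subset B(0,2\varepsilon^\alpha)$. This factor of two is immaterial for the Approximation Lemma that follows, but as written your argument proves the lemma with constant $8$ rather than $4$.
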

\begin{proof}[Proof of Lemma~\ref{lem:sigma}]
Suppose $w\in B(0,\sigma)$, $w\neq 0$. Write $\wt{w}$ for the vector in $\R^{n+2}$ given by
$\frac{1}{\|w\|}(w,0,0)$. Define a function $\Psi_w\colon \R\to\R$ by the formula $\Psi_w(t)=\langle\Psi(t\frac{w}{\|w\|}),\wt{w}\rangle$.
By the definition we have $\Psi_w(0)=0$. From \eqref{eq:dpsi0} we calculate that $\frac{d}{dt}\Psi_w(0)=1$. Furthermore, as the second
derivative of $\Psi$ is bounded by $C_2$
we have that $|\frac{d^2}{dt^2}\Psi_w(t)|<C_2$. It follows that $\Psi_w(t)\ge t-\frac{C_2}{2}t^2$. 
Set $t_0=2\varepsilon^\alpha$. As $\varepsilon^\alpha<\frac{1}{4C_2}$ by the assumptions,
we have $\Psi_w(t_0)\ge 2\varepsilon^\alpha-2C_2\varepsilon^{2\alpha}\ge\frac32\varepsilon^\alpha$.
Clearly $\Psi_w(t_0)\le\|\Psi(t_0\frac{w}{\|w\|})\|$, hence
$\|\Psi(t_0\frac{w}{\|w\|})\|\ge\frac32\varepsilon^\alpha$.

The condition $\varepsilon<2^{-1/(\alpha-1)}$ implies that $\frac12\varepsilon^\alpha>\varepsilon$.
By the
triangle inequality
\[\|\Psi(t_0\frac{w}{\|w\|})-x\|\ge \|\Psi(t_0\frac{w}{\|w\|})\|-\|x\|>\varepsilon+\varepsilon^\alpha-\varepsilon=\varepsilon^\alpha.\]
This means that 
$\Psi(t_0\frac{w}{\|w\|})$ cannot possibly belong to $B(x,\varepsilon^\alpha)$, hence it is not in the image $\Psi(B')=M\cap B(x,\varepsilon^\alpha)$.
This shows that $t_0\frac{w}{\|w\|}$ cannot belong to $B'$. As $w$ was an arbitrary point in $B'$, this implies that no element in $B'$
can have norm $2\varepsilon^\alpha$. As $B'$ is connected, this implies that $B'$ must be contained in $B(0,2\varepsilon^\alpha)$.
By
the definition of $\sigma$ we immediately recover that $\sigma\le 4\varepsilon^\alpha$.
\end{proof}

We resume the proof of Lemma~\ref{lem:local}.
Choose $w\in B(0,\sigma)$.
Write $y_0=\Psi(w)$, $y_1=\Psi_1(w)$.
By the Taylor formula we have 
\begin{equation}\label{eq:psitaylor}
\|y_0-y_1\|=\|\Psi_1(w)-\Psi(w)\|\le C_2\|w\|^2. 
\end{equation}
We have $\|w\|<4\varepsilon^\alpha$ and $2\alpha>1$.
Using the assumption that $\varepsilon<(32C_2)^{-1/(2\alpha-1)}$ we infer that $C_2(4\varepsilon^\alpha)^2\le\frac12\varepsilon$.
so that $\|\Psi_1(w)-\Psi(w)\|\le\frac12\varepsilon$.
Therefore, as $\dist(x,M_{\alpha\varepsilon})=\varepsilon$, we infer that for each point $y'$ in the interval connecting $y_0$ and $y_1$
we have $\|x-y'\|\ge\frac12\varepsilon$.

Write now for $i=1,\ldots,n+1$:
\begin{equation}\label{eq:partialSec}
\frac{\partial \Sec_x^*\eta}{\partial x_i}=\sum_{i\neq j} F_{ij}(x,y)dy_1\wedge\ldots\widehat{dy_i,dy_j}\ldots\wedge dy_{n+2}.
\end{equation}
By the mean value theorem, for any $i,j$ there exists a point $y'$ in the interval connecting $y_0$ and $y_1$ such that
\begin{equation}\label{eq:meanvalue}
F_{ij}(x,y_0)-F_{ij}(x,y_1)=\|y_0-y_1\|\partial_vF_{ij}(x,y'),
\end{equation}
where $\partial_v$ is the directional derivative in the direction of the vector $\frac{y_0-y_1}{\|y_0-y_1\|}$. Now $\|x-y'\|\ge\frac12\varepsilon$,
hence by Lemma~\ref{lem:technicalestimate}:
\begin{equation}\label{eq:partialv}\left|\partial_vF_{ij}(x,y')\right|\le C^D_{n,2}\left(\frac12\varepsilon\right)^{-n-2}.\end{equation}
From \eqref{eq:psitaylor} we deduce that
\begin{equation}\label{eq:aboutFij}
\left|F_{ij}(x,\Psi(w))-F_{ij}(x,\Psi_1(w))\right|\le C\|w\|^2\varepsilon^{-n-2}
\end{equation}
for some constant $C$ depending on $C_2$.

Set $G_{ij}(w)$ and $H_{ij}(w)$ to be defined by
\begin{align*}
\Psi^*dy_1\wedge\ldots\widehat{dy_i,dy_j}\ldots\wedge dy_{n+2}&=G_{ij}dw_1\wedge\ldots\wedge dw_n\\
\Psi_1^*dy_1\wedge\ldots\widehat{dy_i,dy_j}\ldots\wedge dy_{n+2}&=H_{ij}dw_1\wedge\ldots\wedge dw_n.\\
\end{align*}
The values of $G_{ij}$ and $H_{ij}$ are bounded by a constant depending on $C_1$. Moreover
the expression $D\Psi(w)-D\Psi_1(w)$ has all entries bounded from above by $\|w\|$ times a constant, hence an exercise in linear algebra shows that
\begin{equation}\label{eq:CG}
|G_{ij}(w)-H_{ij}(w)|\le C_G\|w\|
\end{equation}
for some constant $C_G$ depending on $C_1$. Now write
\begin{multline*}
\Psi^*F_{ij}(x,y)dy_1\wedge\ldots\widehat{dy_i,dy_j}\ldots\wedge dy_{n+2}-\Psi_1^*F_{ij}(x,y)dy_1\wedge\ldots\widehat{dy_i,dy_j}\ldots\wedge dy_{n+2}
=\\
\left(F_{ij}(x,\Psi(w))G_{ij}(w)-F_{ij}(x,\Psi_1)(w)H_{ij}(w)\right)dw_1\wedge\ldots\wedge dw_n.
\end{multline*}
We estimate using \eqref{eq:aboutFij}:
\begin{multline*}
|F_{ij}(x,\Psi(w))G_{ij}-F_{ij}(x,\Psi_1)H_{ij}|\le \\
|F_{ij}(x,\Psi(w))-F_{ij}(x,\Psi_1(w))|\cdot |H_{ij}(w)|+|F_{ij}(x,\Psi(w)|\cdot|G_{ij}(w)-H_{ij}(w)|.
\end{multline*}
Combining this with \eqref{eq:aboutFij} we infer that
\begin{equation}\label{eq:new_estimate}
|F_{ij}(x,\Psi(w))G_{ij}-F_{ij}(x,\Psi_1)H_{ij}|\le C(\|w\|^2\varepsilon^{-n-2}+\|w\|\varepsilon^{-n-1}),
\end{equation}
where the factor $\varepsilon^{-n-1}$ comes from the estimate of $F_{ij}(x,\Psi(w))$ and the constant $C$
depends on previous constants, that is, $C$ depends on $C_1$, $C_2$.

We use now \eqref{eq:new_estimate} together with \eqref{eq:partialSec} and the definitions of $G_{ij}$, $H_{ij}$. After straightforward
calculations we obtain for some constant $C$:
\[\left|\int_{B'}\frac{\partial}{\partial x_j}\left(\Psi^*\Sec_x(y)-\Psi_1^*\Sec_x(y)\right)\right|\le\int_{B(0,\sigma)} C(\|w\|^2\varepsilon^{-n-2}+\|w\|\varepsilon^{-n-1}).\]
The last expression is bounded by $C_{app}(\varepsilon^{(n+3)\alpha-(n+2)}+\varepsilon^{(n+2)\alpha-(n+1)})$, where $C_{app}$
is a new constant. As $\alpha<1$
and $\varepsilon\ll 1$, the term $\varepsilon^{(n+3)\alpha-(n+2)}$ is dominating.
\end{proof}

In the following result we show that the constants in the Approximation Lemma~\ref{lem:local} can be made universal, that is, depending
only on $M$ and $\alpha$ and not on $x$ and $\varepsilon$.
\begin{proposition}\label{prop:local}
For any $\alpha\in(\frac12,1)$ there exist constants $C_\alpha$ and $\varepsilon_1>0$ such that for any $x\notin M$ such that
$\dist(x,M)<\varepsilon_1$ we have
\[\left|\frac{\partial\map}{\partial x_i}(x)-\frac{\partial\map_V}{\partial x_i}(x)\right|\le C_{\alpha}\varepsilon^{(n+2)\alpha-(n+1)}.\]
Here $\map_V$ is a map $\map$ defined relatively to the plane $V$ that is tangent to $M$ at a point $y$ such that $\dist(x,M)=\|x-y\|$.
\end{proposition}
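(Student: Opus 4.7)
The plan is to compare $\partial_i\map(x)$ and $\partial_i\map_V(x)$ via a local--global splitting around the sphere of radius $\varepsilon^\alpha$ about $x$. First I set up coordinates: let $y\in M$ be the nearest point to $x$ (unique for $\varepsilon_1$ small) and $\varepsilon=\|x-y\|$; since $x-y\perp T_yM$, the tangent plane $V=y+T_yM$ satisfies $\dist(x,V)=\varepsilon$ and $V_{\alpha\varepsilon}:=V\cap\overline{B(x,\varepsilon^\alpha)}$ is a round $n$-disk of radius $\sqrt{\varepsilon^{2\alpha}-\varepsilon^2}$ centered at $y$. By compactness of $M$ and the existence of a tubular neighborhood of uniform size, I can choose $\varepsilon_1$ uniformly so that a parametrization $\Psi\colon B'\to M_{\alpha\varepsilon}$ satisfying the hypotheses of Lemma~\ref{lem:local} exists with derivative bounds $C_1,C_2$ depending only on $M$, condition~\eqref{eq:locall} holds, and Theorem~\ref{thm:aux2} produces a direction $z\in S^{n+1}$ (which I also arrange to be transverse to $V$) with $\langle\Sec_{x'}(\cdot),z\rangle\le D<1$ for $x'$ in a neighborhood of $x$. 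With $\eta=\eta_z$ fixed, both $\partial_i\map(x)$ and $\partial_i\map_V(x)$ are then represented by the integral $\sigma_{n+1}^{-1}\int\partial_{x_i}\Sec_x^*\eta$ over $M$ and over $V$ respectively.

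The second step is the decomposition
\[
\sigma_{n+1}\bigl(\partial_i\map(x)-\partial_i\map_V(x)\bigr)=I_1+I_2,
\]
with near contribution $I_1=\int_{M_{\alpha\varepsilon}}\partial_{x_i}\Sec_x^*\eta-\int_{V_{\alpha\varepsilon}}\partial_{x_i}\Sec_x^*\eta$ and far contribution $I_2=\int_{M\setminus M_{\alpha\varepsilon}}\partial_{x_i}\Sec_x^*\eta-\int_{V\setminus V_{\alpha\varepsilon}}\partial_{x_i}\Sec_x^*\eta$. For $I_1$ I factor through $M_1:=\Psi_1(B')\subset V$: Lemma~\ref{lem:local} bounds $|\int_{M_{\alpha\varepsilon}}-\int_{M_1}|\le C\varepsilon^{(n+3)\alpha-(n+2)}$, and the remaining comparison of the two $n$-disks $M_1$ and $V_{\alpha\varepsilon}$ in $V$---whose boundaries differ in $V$ by a normal Taylor correction of size $O(\varepsilon^{2\alpha})$ from Lemma~\ref{lem:sigma}---is treated as a boundary integral controlled by Lemma~\ref{lem:mainbound}, after extracting the first-order cancellation along the common boundary.

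The main obstacle is $I_2$. Pointwise, each integrand is $O(\|y-x\|^{-(n+1)})$ by Lemma~\ref{lem:mainbound}, giving only the crude bound $O(\varepsilon^{-(n+1)\alpha})$ on each piece separately, which is far too weak; the needed cancellation must come from the first-order tangency of $M$ to $V$ at $y$. I would proceed via Stokes' theorem: using $d\eta=\omega_{n+1}$, each of the two far-part integrals equals $\int\partial_{x_i}\Sec_x^*\omega_{n+1}$ over an appropriate Seifert hypersurface bounded by the corresponding set. Gluing the two Seifert hypersurfaces along their common boundary sphere $\partial B(x,\varepsilon^\alpha)$, with a collar bridge matching the $(n-1)$-boundaries of $M_{\alpha\varepsilon}$ and $V_{\alpha\varepsilon}$, yields an $(n+1)$-chain $W$; then $I_2$ equals $\int_W\partial_{x_i}\Sec_x^*\omega_{n+1}$ plus the collar correction. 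The interior integral is bounded by $\vol_{n+1}(W)$ times the pointwise size of $\partial_{x_i}\Sec_x^*\omega_{n+1}$ (controlled by Lemma~\ref{lem:technicalestimate}), while the collar correction is absorbed into the analysis of $I_1$. Summing and using the uniform choice of $C_1,C_2,D$ gives the target bound $|\partial_i\map-\partial_i\map_V|\le C_\alpha\varepsilon^{(n+2)\alpha-(n+1)}$ with $C_\alpha$ independent of $x$.
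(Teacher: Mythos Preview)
Your proposal is aiming at a stronger statement than what the paper actually proves here. Despite the somewhat misleading wording of the proposition (and the exponent, which should read $(n+3)\alpha-(n+2)$ to match Lemma~\ref{lem:local} and the way the result is invoked in Theorem~\ref{thm:maintechnical}), the paper's proof of Proposition~\ref{prop:local} is purely local: its entire content is that the constants in the Approximation Lemma (Lemma~\ref{lem:local}) can be taken uniformly in $x$. Concretely, the proof covers $M$ by finitely many coordinate charts with uniform derivative bounds, shows that for each $x$ the patch $M_{\alpha\varepsilon}$ lies in one chart and admits a reparametrization $\Psi_x$ satisfying \eqref{eq:dpsi0}, verifies \eqref{eq:locall} by shrinking $\varepsilon_1$, and checks that the parameter domain $B'$ is star-shaped (in fact convex) via a Hessian estimate on $R(w)=\|\Psi_x(w)-x\|^2$. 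Lemma~\ref{lem:local} then gives the bound directly. There is no far-part analysis in this proof at all.

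Your term $I_2$ is precisely what the paper handles separately, in the Approximation Theorem~\ref{thm:maintechnical}, and there by a completely different and much more elementary mechanism: iterated application of the Drilled Ball Lemma~\ref{lem:importantestimate} over a geometric sequence of annuli for the $M$-part, and a direct radial integration (equation~\eqref{eq:local4}) for the unbounded plane $V$. No cancellation between the two far pieces is exploited; each is bounded separately by $C\varepsilon^{-\theta}$. Your Stokes approach to $I_2$ has genuine gaps: $V$ is not compact, so $V\setminus V_{\alpha\varepsilon}$ bounds no Seifert hypersurface to which Stokes applies; the $(n+1)$-chain $W$ is never actually constructed; and even granting $W$, you give no reason why $\vol_{n+1}(W)\cdot\sup|\partial_{x_i}\Sec_x^*\omega_{n+1}|$ would be of order $\varepsilon^{(n+2)\alpha-(n+1)}$. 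Your treatment of $I_1$ is essentially on the right track, but the additional comparison of $M_1$ with $V_{\alpha\varepsilon}$ is not something the paper carries out here either.
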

\begin{proof}

Cover $M$ by a finite number of subsets $U_i$ such that each of these subsets can be parametrized by a map $\Psi_i\colon V_i\to U_i$,
where $V_i$ is a bounded subset of $\R^n$. By the compactness of $M$, there exists $\varepsilon_1>0$ such that if $U\subset M$ has diameter
less than $\varepsilon_1$, then $U$ is contained in one of the $U_i$. Shrinking $\varepsilon_1$ if necessary we may and will assume that
if $\dist(x,M)<\varepsilon_1$, then there is a unique point $y\in M$ such that $\dist(x,M)=\|x-y\|$.

Set $C_1$ and $C_2$ to be the upper bound on the first and the second derivatives of all of the $\Psi_i$. The derivative $D\Psi_i(w)$ is injective
for all $w\in V_i$. We assume that $C_0>0$ is such that $\|D\Psi_i(w)v\|\ge C_0\|v\|$ for all $v\in\R^n$, $i=1,\ldots,n$ and $w\in V_i$.

Choose a point $x$ at distance $\varepsilon>0$ to $M$ such that $2\varepsilon^\alpha<\varepsilon_1$ and $\varepsilon<\varepsilon_1$. Let
$M_{\alpha\varepsilon}=B(x,\varepsilon^\alpha)\cap M$. As this set has diameter less than $\varepsilon_1$, we infer that
$M_{\alpha\varepsilon}\subset U_i$ for some $i$. Let $B=\Psi_i^{-1}(M_{\alpha\varepsilon})\subset V_i$. Let $y\in M$ be the unique point
realizing $\dist(x,M)=\|x-y\|$. We translate the set $B$ in such a way that $\Psi_i(0)=y$. Next we rotate the coordinate system in $\R^{n+2}$
in such a way that the image of $D\Psi(0)$ has a block structure $A\oplus\left(\begin{smallmatrix} 0 & 0\\ 0& 0\end{smallmatrix}\right)$
for some invertible matrix $A$. We know that $A^{-1}$ is a matrix with coefficients bounded by an universal constant depending
on $c_1$ and $C_1$. Define now $B_0=A^{-1}(B)$ and $\Psi_x=\Psi_i\circ A$. Then $\Psi$ has first and second derivatives bounded by
a constant depending on $C_1,C_2$ and $c_1$. Denote these constants by $C_1(x),C_2(x)$. Let also be $C_0(x)>0$ be such that if $w,w'\in B$,
then $\|\Psi_x(w)-\Psi_x(w')\|\ge C_0(x)\|w-w'\|$. Such constant exists because $D\Psi(w)$ is injective and we use the mean value theorem.
Moreover $C_0(x)$ is bounded below by a constant depending on $C_1$, $C_2$ and $C_0$.

It remains to ensure that the following two conditions are satisfied. First, the set $B=\Psi_x^{-1}(M_{\alpha\varepsilon})$ has to be star-shaped,
second the inequality \eqref{eq:locall} is satisfied. The second condition is obviously guaranteed by taking $\varepsilon_1$ sufficiently small. 
We claim that the first condition can also be guaranteed by taking $\varepsilon_1$. To see this we first notice that if $\varepsilon_1$ is sufficiently
small, then $M_{\alpha\varepsilon}$ is connected for all $\varepsilon<\varepsilon_1$. Next, we take a closer look at the definition of $B\subset V_i$.
Namely we can think of $B$ as the set of points $w\in V_i$
satisfying the inequality $R(w)\le\varepsilon^\alpha$, where
\[R(w)=\|\Psi_x(w)-x\|^2=\langle \Psi_x(w)-x,\Psi_x(w)-x\rangle.\]
For $v\in\R^{n}$ we have
\[D^2R(0)(v,v)=2\langle D\Psi_x(0)(v,v),\Psi_x(0)-x\rangle+\langle D\Psi_x(0)v,D\Psi_x(0)v\rangle.\]
By the construction of $\Psi_x$ we have that $\langle D\Psi_x(0)v,D\Psi_x(0)v\rangle=\|v\|^2$ hence
\[D^2R(0)(v,v)\ge (1-2\varepsilon C_2(x))\|v\|^2.\]
Generalizing this for $w\in B$ and $v\in\R^n$ we have
\[D^2R(w)(v,v)= \|DR(w)v\|^2+2\langle D^2\Psi_x(w)(v,v),\Psi_x(w)-x\rangle.\]
Now $\|D^2\Psi_x(w)(v)\|\le C_2(x)\|v\|^2$ and by the mean value theorem also $\|DR(w)-DR(0)\|\le C_2(x)\|w\|$. Suppose $\|x-\Psi_x(w)\|\le\varepsilon^\alpha$.
Then $\|\Psi_x(w)-\Psi_x(0)\|\le 2\varepsilon^\alpha$ and so $\|w\|\le 2C_0(x)\varepsilon^\alpha$. Hence
\[D^2R(w)(v,v)\ge (1-2\varepsilon^\alpha C_0(x)C_2(x)-2\varepsilon^\alpha C_2(x))\|v\|^2.\]
This shows that $R$ is a convex function if $\varepsilon$ is sufficiently small. Hence $B$ is a convex subset, in particular, it is also star-shaped.
Therefore all the assumptions of the Approximation Lemma~\ref{lem:local} are satisfied, the statement follows.
\end{proof}

\subsection{Approximation Theorem}
Combining the Drilled Ball Lemma~\ref{lem:importantestimate} and Proposition~\ref{prop:local}, we obtain a result which is the main technical estimate.
\begin{theorem}[Approximation Theorem]\label{thm:maintechnical}
For any $\theta\in(\frac{n+2}{n+4},1)$ there exists a constant $C_\theta$ such that if $x$ is at distance $\varepsilon>0$ to $M$
and $\varepsilon<\varepsilon_1$,
$y_0\in M$ is the point realizing the minimum of $dist(x,M)$ and $V$ is the tangent space to $M$ passing through $y_0$,
then for any $j=1,\ldots,n+2$
\[\left|\frac{\partial}{\partial x_j}\map(x)-\frac{\partial}{\partial x_j}\map_V(x)\right|\le C_\theta\varepsilon^{-\theta}.\]
Here, $\map_V$ is the map $\map$ defined relatively to the hyperplane $V$.
\end{theorem}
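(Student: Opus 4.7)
The plan is to decompose both $\partial_j\map(x)$ and $\partial_j\map_V(x)$ into a near piece over $B(x,\varepsilon^\alpha)$ and a far piece over its complement, for a parameter $\alpha$ chosen in the interval $\bigl(\tfrac{n+2-\theta}{n+3},\theta\bigr]$. The nonemptiness of this interval is equivalent to $\theta>\tfrac{n+2}{n+4}$, which is exactly the hypothesis. As a preliminary step I would fix a common form $\eta_z$ usable for both integrals: combining the Separation Theorem~\ref{thm:aux2} with the observation that $\Sec_x(V)$ is contained in the open hemisphere $\{\langle w,-\nu\rangle>0\}$ (where $\nu$ is the unit normal to $V$ pointing from $y_0$ toward $x$), one selects $z$ close to $\nu$ so that both $\Sec_x(M)$ and $\Sec_x(V)$ lie in a common cap $\{\langle w,z\rangle\le D\}$ with $D<1$ uniform. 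Differentiating under the integral then gives $\partial_j\map(x)=\tfrac{1}{\sigma_{n+1}}\int_M\partial_j\Sec_x^*\eta_z$ and $\partial_j\map_V(x)=\tfrac{1}{\sigma_{n+1}}\int_V\partial_j\Sec_x^*\eta_z$, the latter being convergent by the $\|y-x\|^{-n-1}$ decay from Lemma~\ref{lem:mainbound}.

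Next, I would use the local parametrization $\Psi\colon B'\to M_{\alpha\varepsilon}:=M\cap B(x,\varepsilon^\alpha)$ supplied by Proposition~\ref{prop:local} (with $B'$ star-shaped) and its affine approximation $\Psi_1$, whose image $M_1$ lies in $V$. The difference $\partial_j\map(x)-\partial_j\map_V(x)$ then splits as $A_1+A_2+A_3$, where
\begin{align*}
A_1&=\partial_j\!\int_{M_{\alpha\varepsilon}}\!\Sec_x^*\eta_z-\partial_j\!\int_{M_1}\!\Sec_x^*\eta_z,\\
A_2&=\partial_j\!\int_{M_1}\!\Sec_x^*\eta_z-\partial_j\!\int_{V\cap B(x,\varepsilon^\alpha)}\!\Sec_x^*\eta_z,\\
A_3&=\partial_j\!\int_{M\setminus M_{\alpha\varepsilon}}\!\Sec_x^*\eta_z-\partial_j\!\int_{V\setminus B(x,\varepsilon^\alpha)}\!\Sec_x^*\eta_z.
\end{align*}
The Approximation Lemma~\ref{lem:local} (used via Proposition~\ref{prop:local}) gives $|A_1|\le C\varepsilon^{(n+3)\alpha-(n+2)}$. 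For $A_3$, the pointwise bound $|\partial_j\Sec_x^*\eta_z|\le C\|y-x\|^{-n-1}$ from Lemma~\ref{lem:mainbound}, combined with a layer-cake integration using Proposition~\ref{prop:aux1} (for $M$) and the explicit formula $\vol_n(V\cap B(x,r))=\delta_n(r^2-\varepsilon^2)^{n/2}$ (for $V$), yields $|A_3|\le C\varepsilon^{-\alpha}$.

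The term $A_2$ compares two integrals over subsets of $V$ whose symmetric difference is contained in the shell $V\cap\{\varepsilon^\alpha-C\varepsilon^{2\alpha}\le\|y-x\|\le\varepsilon^\alpha+C\varepsilon^{2\alpha}\}$; this localization follows from the Taylor estimate $\|\Psi(w)-\Psi_1(w)\|\le C_2\|w\|^2$ with $\|w\|=O(\varepsilon^\alpha)$. The shell has $n$-volume $O(\varepsilon^{(n+1)\alpha})$ while the integrand has size $O(\varepsilon^{-(n+1)\alpha})$, so $|A_2|=O(1)$. Combining,
\[|\partial_j\map(x)-\partial_j\map_V(x)|\le C\bigl(\varepsilon^{(n+3)\alpha-(n+2)}+\varepsilon^{-\alpha}+1\bigr),\]
and the choices $\alpha\le\theta$ together with $(n+3)\alpha-(n+2)\ge-\theta$ make all three terms $\le C\varepsilon^{-\theta}$; the prescribed range of $\theta$ is exactly what makes these two constraints compatible. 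The main technical obstacle will be the shell analysis in $A_2$: verifying rigorously that $M_1\triangle(V\cap B(x,\varepsilon^\alpha))$ lies in the announced shell, which requires pulling $V\cap B(x,\varepsilon^\alpha)$ back through $\Psi_1^{-1}$ and comparing to $B'=\Psi^{-1}(M_{\alpha\varepsilon})$, leaning on the star-shapedness of $B'$ and the quadratic Taylor bound.
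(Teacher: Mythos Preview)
Your decomposition into near and far pieces mirrors the paper's overall strategy, but your treatment of the far $M$-integral is genuinely different and cleaner. Where the paper bounds $\int_{M\setminus B(x,\varepsilon^{\alpha_0})}\partial_j\Sec_x^*\eta$ by iterating the Drilled Ball Lemma along a geometrically growing sequence of scales $\alpha_{k+1}=\alpha_k+\tfrac1n(\alpha_k-\theta)$ (each shell contributing $C\varepsilon^{-\theta}$, summed $k_0$ times), you obtain the sharper bound $C\varepsilon^{-\alpha}$ in one stroke via layer-cake and Proposition~\ref{prop:aux1}. This is legitimate: $\int_{M\setminus B(x,\varepsilon^\alpha)}\|y-x\|^{-n-1}\le\int_0^{\varepsilon^{-\alpha(n+1)}}\vol_n\bigl(M\cap B(x,t^{-1/(n+1)})\bigr)\,dt\le C_M(n+1)\varepsilon^{-\alpha}$, and then $\alpha\le\theta$ finishes. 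The paper's iteration is thus avoidable; your argument also makes transparent why the window $\bigl(\tfrac{n+2-\theta}{n+3},\theta\bigr]$ appears. You are likewise more careful than the paper in isolating the term $A_2$: the paper's equation~\eqref{eq:local2} silently replaces $M_1=\Psi_1(B')$ by $V\cap B(x,\varepsilon^{\alpha_0})$, whereas your shell estimate actually justifies this step.

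One point deserves more care. Your claim that one can select $z$ close to $\nu$ so that the Separation Theorem applies simultaneously to $M$ is not what Theorem~\ref{thm:aux2} provides: that theorem produces a finite list of points $z_i$ depending only on $M$, with no control on their position relative to $\nu(x)$. The paper has the same loose end (it simply asserts a $D'$ with $\Sec_x(V)\subset\{u_{n+2}<D'\}$ for the already-chosen $z$). A clean fix is to note that the Separation Lemma's $z$ is constructed away from a neighbourhood of $S_{y_0}=T_{y_0}M\cap S^{n+1}$, and since $\overline{\Sec_x(V)}$ is a half great $n$-sphere whose boundary is exactly $S_{y_0}$, the only way $\langle\Sec_x(y),z\rangle\to1$ for $y\in V$ is $\Sec_x(y)\to z$; because $\overline{\Sec_x(V)}$ lies in the hyperplane $\{\langle w,\nu'\rangle=0\}$ (with $\nu'$ the second normal), one may perturb $z$ within the open set $V_x$ of the Separation Lemma to ensure $\langle z,\nu'\rangle\neq0$, which gives a uniform $D'<1$ on each chart. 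You should make this perturbation explicit.
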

\begin{remark}
In Section~\ref{sec:linear}, we have shown that $\map_V$ is not well defined, but if we restrict to `Seifert hypersurfaces' for $V$
which are half-spaces (and that is what we in fact do), then $\map_V$ is defined up to an overall constant. In particular, its derivatives do not
depend on the choice of the half-space.
\end{remark}
\begin{proof}
Let $\xi=\frac{\partial}{\partial x_j}\Sec_x^*\eta$. 
Set also $\alpha_0=\frac{n+2}{n+3}-\frac{\theta}{n+3}$. We have $(n+3)\alpha_0-(n+2)= -\theta$, hence by Proposition~\ref{prop:local} we obtain.
\begin{equation}\label{eq:local2}
\left|\int_{M\cap B(x,\varepsilon^{\alpha_0})}\xi-\int_{V\cap B(x,\varepsilon^{\alpha_0})}\xi\right|\le C_{app}\varepsilon^{-\theta}.
\end{equation}
Set $\alpha_{k+1}=\alpha_k+\frac1n(\alpha_k-\theta)$. By the assumptions
we have $\alpha_0<\theta$, so $\alpha_{k+1}<\alpha_k$ and the sequence $\alpha_k$ diverges to $-\infty$. 
Suppose $k_0<\infty$ is the first index, when $\alpha_{k_0}\le 0$. Set $\alpha_{k_0}=0$
in this case. We use repeatedly the Drilled Ball Lemma~\ref{lem:importantestimate} for $\beta=\alpha_{k+1}$ and $\alpha=\alpha_k$, $k=0,\ldots,k_0-1$.
We are allowed
to do that because for $k<k_0-1$ we have
\[n\alpha_{k+1}-(n+1)\alpha_k=n\left(\frac{n+1}{n}\alpha_k-\frac1n\theta\right)-(n+1)\alpha_k=-\theta\]
and $n\alpha_{k_0}-(n+1)\alpha_{k_0-1}\ge -\theta$.
Summing the inequality from the Drilled Ball Lemma~\ref{lem:importantestimate} for $k$ from $0$ to $k_0-1$ we arrive at
\begin{equation}\label{eq:local3}
\left|\int_{M\cap B(x,\varepsilon^{\alpha_{k_0}})\setminus B(x,\varepsilon^{\alpha_0})}\xi\right|\le k_0\cdot C_{drill}\varepsilon^{-\theta}.
\end{equation}
Recall that $\alpha_{k_0}=0$. Equation~\eqref{eq:local3} does not cover the part of $M$ outside of $B(x,1)$. However, on $M\setminus B(x,1)$, the form $\xi$
is easily seen to have coefficients bounded above by a constant independent of $\varepsilon$ and $x$, hence
\begin{equation}\label{eq:local35}
\left|\int_{M\setminus B(x,1)}\xi\right|\le C_{ext},
\end{equation}
for some constant $C_{ext}$ depending on $M$ but not on $x$ and $\varepsilon$.
It remains to show
\begin{equation}\label{eq:local4}
\left|\int_{V\cap B(x,\varepsilon^{\alpha_0})}\xi-\int_{V}\xi\right|\le C_{flat}\varepsilon^{-\theta}.
\end{equation}
We cannot use the Drilled Ball Lemma~\ref{lem:importantestimate} directly, because $V$ is unbounded. However, we will use similar ideas as in the proof of
the Drilled Ball Lemma~\ref{lem:importantestimate}. The form $\xi$ is an $n$-form whose coefficients on $V$ are bounded by $\nobreak{C^{D'}_{n,1}\|y-x\|^{-(n+1)},}$ where
$D'$ is such that $\pi_x^{-1}(V)\subset\{u_{n+2}<D'\}$. Its restriction
to $V$ is equal to some function $F_x(y)$ times the volume form on $V$, where $|F_x(y)|\le C_VC^{D'}_{n,1}\|y-x\|^{-(n+1)}$ (it is easy to see
that as $V$ is a half-plane, $C_V$ exists). Therefore we need to bound
\begin{equation}\label{eq:to_be_integrated_over_radial}
\int_{V\setminus B(x,\varepsilon^{\alpha_0})} \|y-x\|^{-(n+1)}.
\end{equation}
The method is standard. Introduce radial coordinates on $V$ centered at $y_0$ and notice that $\|y-x\|\le 2\|y_0-y\|$ as long as
$y\in V\setminus B(x,\varepsilon^{\alpha_0})$. Perform first the integral \eqref{eq:to_be_integrated_over_radial} over radial coordinates obtaining
the integral over the radius only, that is
\begin{multline*}
\int_{V\setminus B(x,\varepsilon^{\alpha_0})} \|y-x\|^{-(n+1)}\le \int_{\varepsilon^{0}}^\infty 2^{n-1}\sigma_{n-1}r^{n-1}\cdot r^{-n-1}dr=\\
=2^{n-1}\varepsilon^{-\alpha_0}\sigma_{n-1}\le 2^{n-1}\sigma_{n-1}\varepsilon^{-\theta},
\end{multline*}
where $\sigma_{n-1}$ is the volume of a unit sphere of dimension $n-1$.
This proves \eqref{eq:local4} with $C_{flat}=2^{n-1}\sigma_{n-1}C_V C^{D'}_{n,1}$.

Combining \eqref{eq:local2}, \eqref{eq:local3},\eqref{eq:local35} and \eqref{eq:local4} we obtain the desired statement.
\end{proof}

\subsection{The main estimate for the derivative}\label{sec:mainestimate}
This section extends the intuitions given in Section~\ref{sec:linearbehavior}.

It is a result of Erle \cite{Erle}
that the normal bundle of $M \subset \R^{n+2}$ is trivial, but there might be many different trivializations, one class for each element of $[M,SO(2)]=[M,S^1]=H^1(M)$. Choose a pair of two normal
vectors $v_1,v_2$ on $M$ such that at each point $y\in M$, $v_1(y)$ and $v_2(y)$ form an oriented orthonormal basis of the normal space $N_yM$.
Choose $\varepsilon_0<\varepsilon_1$ and let $N$ be the tubular neighborhood
of $M$ of radius $\varepsilon_0$. By taking $\varepsilon_0>0$ sufficiently small
we may and will assume that each $y'\in N$ can be uniquely written as $y+t_1v_1+t_2v_2$ for $y\in M$ and $t_1,t_2\in\R$.

Let $y\in M$. Choose a local coordinate system $w_1,\ldots,w_n$ in a neighborhood of $y$ such that
$\left|\frac{\partial w_j}{\partial x_i}\right|\le C_w$ for some constant $C_w$.
The local coordinate system $w_1,\ldots,w_n$ on $M$ induces a local coordinate system on $N$ given by $w_1,\ldots,w_n,t_1,t_2$. Let $r,\phi$ be such
that $t_1=r\cos(2\pi \phi)$, $t_2=r\sin(2\pi\phi)$.
\begin{theorem}[Main Estimate Theorem]\label{prop:nextbound}
For $\theta\in\left(\frac{n+2}{n+4},1\right)$,
we have $\left|\frac{\partial\map}{\partial w_j}\right|\le C_wC_\theta r^{-\theta}$ and
$\left|\frac{\partial\map}{\partial r}\right|\le C_\theta r^{-\theta}$. Moreover
$\left|\frac{\partial\map}{\partial\phi}-\eps\right|\le C_{\theta}r^{1-\theta}$,
where $\eps\in\{\pm 1\}$ depending on the orientation of $M$.
\end{theorem}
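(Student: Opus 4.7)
The plan is to reduce this to the linear model computed in Section~\ref{sec:linear} via the Approximation Theorem~\ref{thm:maintechnical}, then translate the estimate from Cartesian derivatives $\partial/\partial x_i$ into derivatives with respect to the adapted coordinates $(w_1,\ldots,w_n,r,\phi)$ by the chain rule. Since the Approximation Theorem already gives the bound $|\partial\map/\partial x_i - \partial\map_V/\partial x_i|\le C_\theta r^{-\theta}$ (with $\varepsilon=r$), the task reduces to (a) computing $\partial\map_V/\partial w_j$, $\partial\map_V/\partial r$, $\partial\map_V/\partial\phi$ exactly, and (b) bounding the Jacobian entries $\partial x_i/\partial w_j$, $\partial x_i/\partial r$, $\partial x_i/\partial\phi$.

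For step (a), observe that after the orthogonal change of coordinates that sends the tangent plane $V$ at $y_0$ to the linear subspace $\{t_1=t_2=0\}$, the normal coordinates $(t_1,t_2)$ play exactly the role of $(x_1,x_2)$ in Section~\ref{sec:linear}. By the explicit computation carried out there, $\map_V$ depends only on the angular coordinate of $(t_1,t_2)$ and equals $\eps\phi$ (up to a choice of half-space, which affects only an additive constant and hence drops out upon differentiation). Consequently $\partial\map_V/\partial w_j\equiv 0$ for every tangential coordinate, $\partial\map_V/\partial r\equiv 0$, and $\partial\map_V/\partial \phi \equiv \eps$, with the sign $\eps$ determined by the orientation convention fixed in Section~\ref{sec:sign}.

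For step (b), the hypothesis $|\partial w_j/\partial x_i|\le C_w$ translates (after inversion of the Jacobian on the tangential block, which is bounded by a universal constant once $\varepsilon_0$ is taken small enough that the tubular neighborhood parametrization is uniformly non-degenerate) into $|\partial x_i/\partial w_j|\le C'_w$. For the normal coordinates, $t_1=r\cos 2\pi\phi$ and $t_2=r\sin 2\pi\phi$ yield $|\partial x_i/\partial r|\le 1$ and $|\partial x_i/\partial\phi|\le 2\pi r$. Plugging into the chain rule and combining with the Approximation Theorem, we obtain
\begin{align*}
\Bigl|\tfrac{\partial\map}{\partial w_j}\Bigr| &= \Bigl|\tfrac{\partial\map}{\partial w_j}-\tfrac{\partial\map_V}{\partial w_j}\Bigr|\le (n+2)\,C'_w\,C_\theta\,r^{-\theta},\\
\Bigl|\tfrac{\partial\map}{\partial r}\Bigr| &= \Bigl|\tfrac{\partial\map}{\partial r}-\tfrac{\partial\map_V}{\partial r}\Bigr|\le (n+2)\,C_\theta\,r^{-\theta},\\
\Bigl|\tfrac{\partial\map}{\partial\phi}-\eps\Bigr| &= \Bigl|\tfrac{\partial\map}{\partial\phi}-\tfrac{\partial\map_V}{\partial\phi}\Bigr|\le (n+2)\cdot 2\pi r\cdot C_\theta\,r^{-\theta},
\end{align*}
which after absorbing constants gives the three claimed inequalities. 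The extra factor of $r$ coming from the angular derivative is precisely what upgrades the exponent from $-\theta$ to $1-\theta$ in the last estimate.

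The only delicate point is that the Approximation Theorem compares $\map$ to the tangent-plane map $\map_V$ whose Seifert ``hypersurface'' is a half-space, and that $\map_V$ is defined only up to an additive constant; however, as remarked just after Theorem~\ref{thm:maintechnical}, this ambiguity disappears upon differentiation, so the chain-rule computation above is unambiguous. No new analytic estimate is required beyond Theorem~\ref{thm:maintechnical} and the linear-model formula $\map_V=\eps\phi$; hence the main obstacle has already been cleared in the preceding subsections, and what remains is essentially the bookkeeping of the Jacobian bounds.
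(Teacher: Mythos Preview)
Your proposal is correct and follows essentially the same route as the paper: invoke the Approximation Theorem~\ref{thm:maintechnical} to control $\partial\map/\partial x_i-\partial\map_V/\partial x_i$, read off the derivatives of the linear model $\map_V=\eps\phi$ from Section~\ref{sec:linear}, and then pass to the adapted coordinates $(w_1,\ldots,w_n,r,\phi)$ via the chain rule, where the crucial observation is that $|\partial x_i/\partial\phi|=O(r)$ gains the extra factor in the last estimate. The paper's own proof is slightly terser (it writes $\tfrac{\partial\map}{\partial\phi}=-r\sin\phi\,\tfrac{\partial\map}{\partial t_1}+r\cos\phi\,\tfrac{\partial\map}{\partial t_2}$ directly and appeals to the unit length of the framing $v_1,v_2$ for the $r$-bound), but the content is identical.

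One minor point worth flagging: your claim $\partial\map_V/\partial w_j\equiv 0$ (also asserted in the paper) is not literally true, because the coordinate vector $\partial/\partial w_j$ at $x=y_0+t_1v_1+t_2v_2$ has a normal component $t_1\,\partial v_1/\partial w_j+t_2\,\partial v_2/\partial w_j$ of size $O(r)$; pairing this against $\nabla\map_V=O(r^{-1})$ yields an $O(1)$ contribution, not zero. However this bounded term is dominated by $C_\theta r^{-\theta}$ as $r\to 0$, so after absorbing it into the constant the stated inequality survives. The same remark applies to the inversion of the Jacobian: the statement carries the constant $C_w$ bounding $|\partial w_j/\partial x_i|$, whereas the chain rule as you apply it needs $|\partial x_i/\partial w_j|$; your renaming to $C'_w$ is the honest fix, and the discrepancy with the constant named in the theorem is harmless since all such constants are controlled uniformly by compactness of $M$.
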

\begin{proof}
Choose a point $x=(w_1,\ldots,w_n,t_1,t_2)$. Let $y_0=(w_1,\ldots,w_n,0,0)$ be the point minimizing the distance
from $x$ to $M$.
We will use the Approximation Theorem~\ref{thm:maintechnical}.
So let $V$ be the $n$-dimensional plane tangent to $M$ at $y_0$. The map $\map_V$ is the map $\map$ relative to $V$. By the
explicit
calculations in Section~\ref{sec:linear}
we infer that $\frac{\partial\map_V}{\partial w_j}(x)=\frac{\partial\map_V}{\partial r}(x)=0$ for $j=1,\ldots,n$
and $\frac{\partial\map_V}{\partial\phi}=\eps$. Now $\frac{\partial\map}{\partial w_j}$ differs from the derivatives
of $\map_V$ by at most $C_\theta r^{-\theta}$ by the Approximation Theorem~\ref{thm:maintechnical}.

On the other hand, by chain rule $\frac{\partial\map}{\partial \phi}=-r\sin\phi\frac{\partial\map}{\partial t_1}+r\cos\phi\frac{\partial\map}{\partial t_2}$.
Applying Theorem~\ref{thm:maintechnical} we infer that
$\left|\frac{\partial\map}{\partial\phi}(x)-\frac{\partial\map_V}{\partial\phi}(x)\right|\le C_\theta r^{1-\theta}$. The same argument
shows that $\left|\frac{\partial\map}{\partial r}(x)\right|\le C_\theta r^{-\theta}$. Notice that the derivatives with respect to $r$ and $\theta$
do not depend on $C_w$: this is so because the length of the framing vectors $v_1$ and $v_2$ is $1$.
\end{proof}

\section{Behavior of $\map$ near $M$}\label{sec:behavior2}
Throughout the section we choose $\theta\in\left(\frac{n+2}{n+4},1\right)$. The constant $\varepsilon_0$ is as defined in Section~\ref{sec:mainestimate}.
We decrease further $\varepsilon_0$ to ensure that
\begin{equation}\label{eq:cthet}
C_\theta\varepsilon_0^{1-\theta}<\frac12
\end{equation}
so that, by Theorem~\ref{prop:nextbound}
\begin{equation}\label{eq:dphi}
\left|\frac{\partial\map}{\partial\phi}-\eps\right|<\frac12.
\end{equation}
\subsection{Local triviality of $\map$ near $M$}
Let now $X=(0,\varepsilon_0]\times S^1\times M$. Let $\Pi\colon X\to N\setminus M$ be a parametrization given by
\[\Pi\colon (r,\phi,x)\mapsto x+v_1r\cos\phi+v_2r\sin\phi.\]
The composition $\map\circ\Pi\colon X\to S^1$ will still be denoted by $\map$. We are going to show that this map is a locally trivial
fibration whose fibers have bounded $(n+1)$-dimensional volume.

\begin{lemma}[Fibration Lemma]\label{cor:locallytrivial}
The map $\map\colon X\to S^1$ is a smooth, locally trivial fibration, whose fiber is $(0,\varepsilon_0]\times M$.
\end{lemma}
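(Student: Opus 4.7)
The plan is to exhibit an explicit global trivialization of $\map$ over $S^1$, from which the conclusion follows at once. The key input is that, by the Main Estimate Theorem~\ref{prop:nextbound} combined with the size assumption \eqref{eq:cthet} on $\varepsilon_0$, the estimate \eqref{eq:dphi} holds at every point of $X$: namely $|\partial\map/\partial\phi-\eps|<1/2$, so $\partial\map/\partial\phi$ has constant sign $\eps$ and satisfies $|\partial\map/\partial\phi|>1/2$ throughout $X$. In particular $\map$ is a submersion on $X$, so all its fibers are smooth $(n+1)$-dimensional submanifolds.

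With this in hand, I will define the smooth map
\[\Psi\colon X\longrightarrow S^1\times(0,\varepsilon_0]\times M,\qquad (r,\phi,x)\longmapsto\bigl(\map(r,\phi,x),\,r,\,x\bigr),\]
and show it is a diffeomorphism that intertwines $\map$ with the projection onto the first factor. Writing its differential in the coordinates $(\phi,r,x)$ on the source and $(t,r,x)$ on the target yields a block-triangular matrix whose diagonal blocks are $\partial\map/\partial\phi$, $1$, and the identity on $TM$; since $\partial\map/\partial\phi$ is nowhere zero, $\Psi$ is a local diffeomorphism.

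The heart of the argument is bijectivity of $\Psi$, which reduces to showing that for each fixed $(r,x)\in(0,\varepsilon_0]\times M$ the circle map
\[g_{r,x}\colon S^1\longrightarrow S^1,\qquad\phi\longmapsto \map(r,\phi,x)\]
is a diffeomorphism. Its derivative has constant sign $\eps$ and magnitude greater than $1/2$, so $g_{r,x}$ is an immersion. I will compute its degree by lifting $g_{r,x}$ to a map $\R\to\R$ and integrating: the degree equals $\int_0^1\partial\map/\partial\phi\,d\phi$, which must be an integer and, by \eqref{eq:dphi}, lies within $1/2$ of $\eps$, hence equals $\eps\in\{\pm 1\}$. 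An immersion $S^1\to S^1$ of degree $\pm 1$ is a diffeomorphism, so for every $t\in S^1$ there is a unique $\phi$ with $\map(r,\phi,x)=t$, and $\Psi$ is bijective.

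A bijective local diffeomorphism is a diffeomorphism, so $\Psi$ trivializes $\map$ globally over $S^1$ with fiber $(0,\varepsilon_0]\times M$; in particular, $\map$ is a smooth locally trivial fibration, as claimed. I expect the main obstacle to be the degree computation in the third paragraph: it relies crucially on the quantitative closeness of $\partial\map/\partial\phi$ to $\eps$ secured by \eqref{eq:cthet}, and without this sharper bound one cannot rule out higher-degree circle maps and hence the possibility that different values of $\phi$ (for the same $r,x$) project to the same $t$.
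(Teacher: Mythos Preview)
Your proof is correct and follows essentially the same approach as the paper's: both hinge on showing that for each fixed $(r,x)$ the circle map $\phi\mapsto\map(r,\phi,x)$ is a diffeomorphism, using the derivative bound \eqref{eq:dphi}, and then assembling these into a trivialization. Your map $\Psi$ is precisely the inverse of the paper's trivialization $\wt{\Theta}$ (up to reordering factors), and your degree computation makes explicit what the paper leaves implicit when it asserts that $\map_{r,x}$ is a diffeomorphism from the derivative bound alone.
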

\begin{proof}
Choose $r\in(0,\varepsilon_0]$ and $x\in M$. Consider the map $\map_{r,x}\colon S^1\to S^1$ given by $\map_{r,x}=\map|_{\{r\}\times S^1\times \{x\}}$.
The derivative of $\map_{r,x}$ is equal to $\frac{\partial\map}{\partial\phi}$, by \eqref{eq:dphi} it belongs either to the interval $(-\frac32,-\frac12)$ 
or to $(\frac12,\frac32)$ depending
on the $\epsilon$. It follows that $\map_{r,x}$
is a diffeomorphism. In particular, given $r\in(0,\varepsilon_0]$ and $x\in M$, 
for any $t\in S^1$, there exists a unique point $\Theta_t(r,x)$ such that $\map(r,\Theta_t(r,x),x)=t$.
In this way we get a bijection $\Theta_t(r,x)\colon (0,\varepsilon_0]\times M\to\Phi^{-1}(t)$.

Again by \eqref{eq:dphi} $|\frac{\partial\map}{\partial\phi}|>\frac12>0$, so 
by the implicit function theorem we infer that $\Theta_t$ is in fact a smooth map. Then $\Theta_t$ is a smooth parametrization of the fiber
of $\map$. It remains to show that $\map$ is locally trivial.

To this end we choose a point $t\in S^1$ and let $U\subset S^1$ be a neighborhood of $t$. Define the map $\wt{\Theta}\colon (0,\varepsilon_0]\times U\times M\to
\map^{-1}(U)$ by the formula
\[\wt{\Theta}(r,t,x)=(r,\Theta_t(r,x),x).\]
Clearly $\wt{\Theta}$ is a bijection.
As $\Theta_t$ depends smoothly on the parameter $t$, we infer that $\wt{\Theta}$ is a smooth map and the map $\map^{-1}(U)\to (0,\varepsilon_0]\times U\times M$
given by $(r,\phi,x)\mapsto (r,\map(r,\phi,x),x)$ is its inverse. Therefore $\wt{\Theta}$ is a local trivialization.
\end{proof}
\begin{remark}\label{rem:otherfiber}
Define the map $\map_M\colon X\to S^1\times M$ by $\map_M(r,\phi,x)=(\map(r,\phi,x),x)$. The same argument as in the proof of 
Fibration Lemma~\ref{cor:locallytrivial} shows that $\map_M$ is a locally trivial fibration with fiber $(0,\varepsilon_0]$. For given $(t,x)\in S^1\times M$
the map $r\mapsto (r,\Theta_t(r,x),x)$ parametrizes the fiber over $(t,x)$.
\end{remark}
As a consequence of Fibration Lemma~\ref{cor:locallytrivial} we show that $\map\colon\R^{n+2}\setminus M\to S^1$ does not have too many critical points.
This is a consequence of Sard's theorem and
the control of $\map$ near $M$ provided by Lemma~\ref{cor:locallytrivial}.
\begin{proposition}
The set of critical values of $\map\colon\R^{n+2}\setminus M\to S^1$ is a closed boundary set of measure zero.
\end{proposition}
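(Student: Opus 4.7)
The plan is to combine Sard's theorem with the smooth extension of $\map$ to $S^{n+2}\setminus M$ from Corollary~\ref{cor:extendstosphere} and the absence of critical points near $M$ from the Fibration Lemma~\ref{cor:locallytrivial}.

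For the measure-zero and boundary-set parts, note that $\map$ is $C^\infty$ on $\R^{n+2}\setminus M$ by Proposition~\ref{prop:smooth}. Sard's theorem then gives that the set of critical values has Lebesgue measure zero in $S^1$. Since any measure-zero subset of $S^1$ has empty interior, the boundary-set property is immediate.

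For closedness I will exploit the smooth extension $\tilde\map\colon S^{n+2}\setminus M\to S^1$ provided by Corollary~\ref{cor:extendstosphere} and let $\tilde C$ denote its critical set. The Fibration Lemma states that $\map$ is a submersion on the punctured tubular neighborhood $\Pi(X)$, hence so is $\tilde\map$, so $\tilde C$ is disjoint from $\Pi((0,\varepsilon_0/2)\times S^1\times M)$. Consequently $\tilde C$ is a closed subset of the compact set $S^{n+2}\setminus \Pi((0,\varepsilon_0/2)\times S^1\times M)$, so $\tilde C$ is compact and its image $\tilde\map(\tilde C)\subset S^1$ is closed. To transfer this to the critical values $\map(C)$ of $\map$ itself, I take a sequence $t_k=\map(x_k)$ of critical values with $t_k\to t_\infty$, $x_k\in C$. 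By the Fibration Lemma, the $x_k$ stay bounded away from $M$, so along a subsequence they converge in $S^{n+2}$ either to an interior point $x_\infty\in\R^{n+2}\setminus M$, in which case $x_\infty\in C$ and $t_\infty=\map(x_\infty)\in \map(C)$, or to the point at infinity, in which case Theorem~\ref{thm:euler} applied at critical points (where the Euler sum $\sum_i x_i\partial_i\map$ vanishes) gives $|t_k|\le C\|x_k\|^{-(n+2)}\to 0$, so $t_\infty=0=\tilde\map(\infty)$. Combined with the closedness of $\tilde\map(\tilde C)$, this shows $\map(C)$ is closed.

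The main obstacle is the careful bookkeeping at the point at infinity. By the proof of Corollary~\ref{cor:extendstosphere}, all derivatives of $\tilde\map$ up to order $n+1$ vanish at $\infty$, so $\infty\in\tilde C$ and $0\in\tilde\map(\tilde C)$. One must verify that removing $\{\infty\}$ from $\tilde C$ (to recover $C$) does not destroy the closedness of the image in $S^1$. The Euler-type decay estimate of Theorem~\ref{thm:euler} is precisely what is needed: it guarantees that the only limit value in $\overline{\map(C)}\setminus\map(C)$ that could conceivably arise is $0$, so the compactness of $\tilde\map(\tilde C)$ transfers cleanly to give closedness of $\map(C)$.
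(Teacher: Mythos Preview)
Your approach is essentially the paper's: extend $\map$ to $S^{n+2}\setminus M$ via Corollary~\ref{cor:extendstosphere}, use the Fibration Lemma to push the critical set into the compact region away from $M$, then combine Sard with compactness. The paper does this in two sentences and without your sequential argument; your separation of the measure-zero and closedness parts is a cosmetic difference.

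There is, however, a genuine logical gap in your final step. You correctly establish that $\tilde\map(\tilde C)$ is closed and that any limit point of $\map(C)$ not already in $\map(C)$ must equal $0$. But this only gives $\overline{\map(C)}\subset\map(C)\cup\{0\}$; it does \emph{not} give that $\map(C)$ itself is closed. Your claim that ``the compactness of $\tilde\map(\tilde C)$ transfers cleanly to give closedness of $\map(C)$'' is a non sequitur: if $0\notin\map(C)$ but there exist critical points $x_k\to\infty$ with $\map(x_k)\to 0$ (and $\map(x_k)\neq 0$), then $\map(C)$ is not closed, and nothing you wrote excludes this. Your appeal to Theorem~\ref{thm:euler} only \emph{confirms} that $\map(x_k)\to 0$ in this scenario; it does not rule the scenario out.

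To be fair, the paper's own proof has the same lacuna: by passing to $\overline{S^{n+2}\setminus N}$ it is implicitly proving the statement for the extended map, whose critical-value set is $\map(C)\cup\{0\}$ (since $\infty$ is critical with value $0$ by the proof of Corollary~\ref{cor:extendstosphere}). For the paper's purposes---producing a regular value $t\neq 0$---this is entirely sufficient, so the distinction is immaterial in context. If you want the proposition literally as stated for $\map$ on $\R^{n+2}\setminus M$, you would need to show that $\map$ has no critical points outside some large ball; the estimates of Section~\ref{sec:propernessofS} point in this direction but do not deliver it outright.
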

\begin{proof}
Extend $\map$ to a map from $S^{n+2}\setminus M\to S^1$ as in Corollary~\ref{cor:extendstosphere}.
We split the $S^{n+2}\setminus M$ as a union of $S^{n+2}\setminus N$ and $N\setminus M$,
where, recall, $N$ is the set of points at distance less than or equal to $\varepsilon_0$. By Sard's theorem
the map $\map$ restricted to $\ol{S^{n+2}\setminus N}$ has
a set of critical points which is closed boundary and of measure zero. On the other hand, on $N\setminus M$ the map has no critical points at all, because
by the Fibration Lemma~\ref{cor:locallytrivial} the map $\map$ restricted to $N\setminus M\cong X$ is a locally trivial fibration.
\end{proof}

We conclude by showing the following control of the fibers of $\map\colon X\to S^1$:
\begin{lemma}[Bounded Volume Lemma]\label{prop:boundedarea}
There exists a constant $A>0$ such that for any $t\in S^1$  the $(n+1)$-dimensional volume of
$\map^{-1}(t)\cap X$ is bounded from above by $A$.
\end{lemma}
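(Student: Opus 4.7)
The plan is to parametrize the fiber $\map^{-1}(t)\cap X$ explicitly via the Fibration Lemma and then bound the Jacobian of this parametrization pointwise.

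By the Fibration Lemma~\ref{cor:locallytrivial}, the fiber $\map^{-1}(t)\cap X$ is the graph of $\Theta_t$, that is, the image of $(0,\varepsilon_0]\times M$ under $(r,x)\mapsto(r,\Theta_t(r,x),x)$. Composing with $\Pi$ yields a parametrization $\Psi_t\colon(0,\varepsilon_0]\times M\to\R^{n+2}$ given by
\[
\Psi_t(r,x)=x+v_1(x)\,r\cos\Theta_t(r,x)+v_2(x)\,r\sin\Theta_t(r,x).
\]
Since $M$ is compact, I cover it by finitely many coordinate charts $(U_1,w^{(1)}),\ldots,(U_N,w^{(N)})$ with uniformly bounded coordinate derivatives; it then suffices to bound the $(n+1)$-volume of $\Psi_t((0,\varepsilon_0]\times U_\alpha)$ uniformly in $t$ and $\alpha$.

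Next I compute the tangent vectors. Using that $v_1,v_2$ and their derivatives along $M$ are uniformly bounded,
\begin{align*}
\partial_r\Psi_t&=(v_1\cos\Theta_t+v_2\sin\Theta_t)+r(-v_1\sin\Theta_t+v_2\cos\Theta_t)\,\partial_r\Theta_t,\\
\partial_{w_i}\Psi_t&=\partial_{w_i}x+r(\partial_{w_i}v_1\cos\Theta_t+\partial_{w_i}v_2\sin\Theta_t)+r(-v_1\sin\Theta_t+v_2\cos\Theta_t)\,\partial_{w_i}\Theta_t.
\end{align*}
Implicit differentiation of $\map(r,\Theta_t(r,x),x)=t$ yields
\[
\partial_r\Theta_t=-\frac{\partial_r\map}{\partial_\phi\map},\qquad
\partial_{w_i}\Theta_t=-\frac{\partial_{w_i}\map}{\partial_\phi\map}.
\]
By \eqref{eq:dphi} we have $|\partial_\phi\map|>\tfrac12$, while the Main Estimate Theorem~\ref{prop:nextbound} gives $|\partial_r\map|\le C_\theta r^{-\theta}$ and $|\partial_{w_i}\map|\le C_wC_\theta r^{-\theta}$. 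Hence $|\partial_r\Theta_t|,|\partial_{w_i}\Theta_t|\le K_1 r^{-\theta}$ for a constant $K_1$ depending only on the chart.

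The decisive observation is that in the expressions for $\partial_r\Psi_t$ and $\partial_{w_i}\Psi_t$ the partial derivatives of $\Theta_t$ always appear multiplied by~$r$, so their contribution is bounded by $K_1 r^{1-\theta}\le K_1\varepsilon_0^{1-\theta}$, which is finite because $\theta<1$. All remaining terms are controlled by the smoothness of $M$ and of the normal frame. Consequently there exists a constant $K$, independent of $t$, $r$, and $x$, such that $\|\partial_r\Psi_t\|,\|\partial_{w_i}\Psi_t\|\le K$ throughout $(0,\varepsilon_0]\times U_\alpha$. By Hadamard's inequality applied to the Gram matrix of $D\Psi_t$, the $(n+1)$-dimensional volume element satisfies $\sqrt{\det(D\Psi_t^{T}D\Psi_t)}\le K^{n+1}$, and integration over $(0,\varepsilon_0]\times U_\alpha$ gives a bound of the form $K^{n+1}\cdot\varepsilon_0\cdot\vol_n(U_\alpha)$. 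Summing over the finite cover provides the required constant $A$.

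The only delicate point is the apparent divergence of $\partial_r\Theta_t$ as $r\to0^+$, which would naively make the Jacobian blow up at $M$. This is neutralized by the factor $r$ coming from the radial parametrization of the tube, together with the constraint $\theta<1$ inherited from the restriction $\theta\in(\tfrac{n+2}{n+4},1)$ imposed at the start of Section~\ref{sec:behavior2}; both ingredients are essential and no further input is needed.
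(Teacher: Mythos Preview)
Your argument is correct, and the route you take is close to but not identical with the paper's. The paper parametrizes the fiber as the graph of $\Theta_t$ inside $X=(0,\varepsilon_0]\times S^1\times M$ with its product metric, so the volume element is $\sqrt{1+\sum_i(\partial_{w_i}\Theta_t)^2+(\partial_r\Theta_t)^2}$, which is only bounded by $C_\Theta r^{-\theta}$; the finiteness then comes from integrating $r^{-\theta}$ over $(0,\varepsilon_0]$ and using $\theta<1$. You instead push forward through $\Pi$ to $\R^{n+2}$ and compute the Euclidean volume element of $\Psi_t$. The point you exploit is that in Euclidean coordinates the angular direction carries an extra factor of $r$, so the potentially singular derivatives $\partial\Theta_t\sim r^{-\theta}$ enter only as $r\,\partial\Theta_t\sim r^{1-\theta}$, yielding a volume element bounded by a constant rather than by $r^{-\theta}$. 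Your bound on the Jacobian is therefore pointwise stronger, and the integration step is trivial; the price is that you have to track the full differential of $\Pi$ (including the derivatives of $x$ and of the frame $v_1,v_2$ along $M$), whereas the paper can use the simple graph formula. Both arguments rest on the same two inputs: the implicit-function bound $|\partial\Theta_t|\le 2\,|\partial\map|$ coming from \eqref{eq:dphi}, and the estimates of the Main Estimate Theorem~\ref{prop:nextbound}.
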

\begin{proof}
Parametrize  $\map^{-1}(t)\cap X$ by $\Theta_t\colon (0,\varepsilon_0]\times M\to X$
as in the proof of the Fibration Lemma~\ref{cor:locallytrivial} above. 
Choose local coordinate system on an open subset $Y\subset M$
and let $X_r^{loc}=[r,\varepsilon_0]\times S^1\times Y$. We aim to show that $\vol_{n+1}\map^{-1}(t)\cap X_r^{loc}$ is bounded
by a constant independent of $r$. Write
\begin{equation}\label{eq:area}
\vol_{n+1}\map^{-1}(t)\cap X_r^{loc}=\int_{[r,\varepsilon_0]\times Y}\sqrt{1+{\Theta'}_{w_1}^2+\ldots+{\Theta'}_{w_n}^2+{\Theta'}_{r}^2}\, dl_{n+1},
\end{equation}
where $dl_{n+1}$ is the $(n+1)$-dimensional Lebesgue measure on $[r,\varepsilon_0]\times Y$, we write $\Theta$ for $\Theta_t$ and
$\Theta'_z$ is a shorthand for $\frac{\partial\Theta}{\partial z}$, and $z$ is any variable of $\{r,w_1,\ldots,w_n\}$.
By the implicit function theorem $\Theta'_z=-\frac{\partial\map}{\partial z}\left(\frac{\partial\map}{\partial\phi}\right)^{-1}$.
Equation~\eqref{eq:dphi} implies that $\left|\frac{\partial\map}{\partial\phi}\right|\ge\frac12$, then
\begin{equation}\label{eq:boundfortheta}\begin{split}
|\Theta'_{w_j}|&\le 2C_\theta C_w r^{-\theta},\ \ j=1,\ldots,n\\
|\Theta'_{r}|&\le 2C_\theta r^{-\theta}.
\end{split}\end{equation}
Hence
\[\sqrt{1+{\Theta'}_{w_1}^2+\ldots+{\Theta'}_{w_n}^2+{\Theta'}_{r}^2}\le\sqrt{1+4C^2_\theta+4(n+1)C^2_wC^2_\theta}r^{-\theta}\le C_\Theta r^{-\theta},\]
where $C_\Theta$ is a constant. Thus
\[\vol_{n+1}\map^{-1}(t)\cap X_r^{loc}\le C_\Theta(\varepsilon_0^{1-\theta}-r^{1-\theta})\vol_n Y\le C_\Theta\varepsilon_0^{1-\theta}\vol_nY.\]
Now $M$ being compact can be covered by a finite number of coordinate neighborhoods, we sum up all the contributions to get
\[\vol_{n+1}\map^{-1}(t)\cap X\le C_\Theta\varepsilon_0^{1-\theta}\vol_nM.\]
\end{proof}

\begin{remark}
Bounded Volume Lemma~\ref{prop:boundedarea} shows that the volume of the fibers $\map^{-1}(t)$ is bounded near $M$ by a constant that does not depend on $t$.
This does not generalize to bounding a global volume of $\map^{-1}(t)$: one can show that the  volume of $\map^{-1}(0)$ is infinite using 
Corollary~\ref{cor:extendstosphere}.
\end{remark}
\subsection{Extension to of $\map$ through $r=0$}
We pass to study the closure of the fibers of map $\map^{-1}(t)\cap X$. This is done by extending the map $\map$.
Set
\[\ol{X}=[0,\varepsilon_0]\times S^1\times M.\]
The manifold $\ol{X}$ can be regarded as an analytic blow-up of the neighborhood $N\setminus M$.
\begin{lemma}[Continuous Extension Lemma]\label{prop:extend1}
The map $\map\colon X\to S^1$ extends to a continuous map $\ol{\map}\colon\ol{X}\to S^1$.
\end{lemma}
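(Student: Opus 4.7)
The plan is to leverage the Main Estimate Theorem~\ref{prop:nextbound}, which gives $|\partial\map/\partial r|\le C_\theta r^{-\theta}$ with $\theta<1$. Since $r^{-\theta}$ is integrable at $0$, radial slices of $\map$ are uniformly Cauchy as $r\to 0^+$, and we define $\ol\map(0,\phi,x)$ as the limit.

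First I would fix $(\phi,x)\in S^1\times M$ and $0<r_1<r_2\le\varepsilon_0$. Choosing a continuous lift of $r\mapsto \map(r,\phi,x)$ to $\R$ (which exists since $(0,\varepsilon_0]$ is simply connected) and applying the fundamental theorem of calculus together with Theorem~\ref{prop:nextbound}, one gets
\[
|\map(r_2,\phi,x)-\map(r_1,\phi,x)|\le \int_{r_1}^{r_2}\frac{C_\theta}{r^\theta}\,dr=\frac{C_\theta}{1-\theta}\bigl(r_2^{1-\theta}-r_1^{1-\theta}\bigr),
\]
where the difference on the left is interpreted in $\R/\Z$ via the lift. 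The key observation is that the right-hand side tends to $0$ as $r_1,r_2\to 0^+$ \emph{uniformly} in $(\phi,x)$, because the constant $C_\theta$ from Theorem~\ref{prop:nextbound} does not depend on $(\phi,x)$. Hence $\{\map(r,\cdot,\cdot)\}_{r\in(0,\varepsilon_0]}$ is a uniform Cauchy net of continuous $S^1$-valued functions on $S^1\times M$, and by completeness of $C(S^1\times M,S^1)$ converges uniformly to a continuous limit, which we take as $\ol\map(0,\cdot,\cdot)$.

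Next I would verify continuity of the extension $\ol\map$ at an arbitrary boundary point $(0,\phi_0,x_0)$. For any nearby $(r,\phi,x)\in\ol X$, the triangle inequality gives
\[
|\ol\map(r,\phi,x)-\ol\map(0,\phi_0,x_0)|\le |\map(r,\phi,x)-\ol\map(0,\phi,x)|+|\ol\map(0,\phi,x)-\ol\map(0,\phi_0,x_0)|.
\]
Taking $r_2=r$ and letting $r_1\to 0^+$ in the Cauchy estimate bounds the first summand by $\frac{C_\theta}{1-\theta}r^{1-\theta}$, which tends to $0$ as $r\to 0^+$ uniformly in $(\phi,x)$. The second summand tends to $0$ as $(\phi,x)\to(\phi_0,x_0)$ by the already established continuity of $\ol\map(0,\cdot,\cdot)$. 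Continuity of $\ol\map$ on the open part $X$ is inherited from the smoothness of $\map$ on $N\setminus M$ (Proposition~\ref{prop:smooth}).

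Given the Main Estimate Theorem, there is really no serious obstacle left: the entire proof is a one-shot application of the Cauchy criterion for uniform convergence, with the integrability of $r^{-\theta}$ at the origin doing all the work. The only minor subtlety is the $S^1$-valuedness of $\map$, which is handled by passing to a local lift along each radial ray, and this is legitimate because each ray $\{(r,\phi,x):r\in(0,\varepsilon_0]\}$ is simply connected.
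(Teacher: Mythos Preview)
Your proof is correct and follows essentially the same approach as the paper: both use the Main Estimate Theorem~\ref{prop:nextbound} to bound $|\partial\map/\partial r|$ by $C_\theta r^{-\theta}$, integrate in $r$ to obtain the uniform Cauchy estimate $\frac{C_\theta}{1-\theta}\bigl(r_2^{1-\theta}-r_1^{1-\theta}\bigr)$, and conclude uniform convergence of $f_r=\map(r,\cdot,\cdot)$ as $r\to 0$. Your treatment is slightly more explicit about lifting to $\R$ to make sense of the $S^1$-valued differences and about verifying continuity at the boundary via the triangle inequality, but the argument is the same.
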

\begin{proof}
Let $f_r\colon S^1\times M\to S^1$ be given by $f_r(\phi,x)=\map(r,\phi,x)$. We shall show that as $r\to 0$ the functions $f_r$ converge uniformly.
The limit, $f_0$, will be the desired extension.

We use Proposition~\ref{prop:nextbound}. In fact, choose $r_0,r_1\in(0,\varepsilon_0]$. Then for $(\phi,x)\in S^1\times M$ we have
\[|f_{r_0}(\phi,x)-f_{r_1}(\phi,x)|\le\int_{r_0}^{r_1} \left|\frac{\partial}{\partial r}f_r(\phi,x)\right| dr\le \int_{r_0}^{r_1} C_\theta r^{-\theta}dr=\frac{C_{\theta}}{1-\theta}
(r_0^{1-\theta}-r_1^{1-\theta}).\]
As $r\mapsto r^{1-\theta}$ is a uniformly continuous function taking value $0$ at $0$, we obtain that $f_r$ uniformly converge to some limit, which we call $f_0$. This
amounts to saying that $\map$ extends to a continuous function on $\ol{X}$.
\end{proof}
\begin{remark}\label{rem:otherfiber2}
Consider the map $\map_M$ defined in Remark~\ref{rem:otherfiber}. Then the proof of Continuous Extension Lemma~\ref{prop:extend1} generalizes to showing
that the map $\map_M$ extends to the continuous map $\ol{\map}_M\colon \ol{X}\to S^1\times M$.
\end{remark}
We can also calculate the function $\map$ for $r=0$.
\begin{proposition}\label{prop:onmap}
There exists a continuous function $\rho\colon M\to S^1$ such that $\ol{\map}(0,\phi,x)=\epsilon\phi+\rho(x)\bmod 1$.
\end{proposition}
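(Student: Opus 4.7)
The plan is to upgrade the pointwise convergence from the Continuous Extension Lemma~\ref{prop:extend1} to a structural statement about the limit, by integrating the estimate on $\partial\map/\partial\phi$ given by the Main Estimate Theorem~\ref{prop:nextbound}.

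First I would fix $x\in M$ and $r\in(0,\varepsilon_0]$, and use the fundamental theorem of calculus together with the bound $\left|\frac{\partial\map}{\partial\phi}(r,\phi,x)-\eps\right|\le C_\theta r^{1-\theta}$ to obtain, for any $\phi_1,\phi_2\in S^1$ (viewed as a fundamental interval of length $1$),
\begin{equation*}
\left|\map(r,\phi_2,x)-\map(r,\phi_1,x)-\eps(\phi_2-\phi_1)\right|\le C_\theta r^{1-\theta}|\phi_2-\phi_1|.
\end{equation*}
The right-hand side tends to $0$ as $r\to 0^+$, uniformly in $(\phi_1,\phi_2,x)$.

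Next I would pass to the limit $r\to 0^+$. By the Continuous Extension Lemma~\ref{prop:extend1}, the left-hand side converges to $\left|\ol{\map}(0,\phi_2,x)-\ol{\map}(0,\phi_1,x)-\eps(\phi_2-\phi_1)\right|$, so this quantity must be zero (as an element of $\R/\Z$). Equivalently, for every $x\in M$ the function $\phi\mapsto \ol{\map}(0,\phi,x)-\eps\phi$ is constant on $S^1$. Define
\begin{equation*}
\rho(x):=\ol{\map}(0,0,x)\in S^1.
\end{equation*}
Then $\ol{\map}(0,\phi,x)=\eps\phi+\rho(x)\bmod 1$ by construction.

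Finally, continuity of $\rho$ is immediate: it is the composition of the continuous inclusion $M\hookrightarrow \ol{X}$, $x\mapsto(0,0,x)$, with the continuous map $\ol{\map}$ supplied by Lemma~\ref{prop:extend1}. There is no real obstacle here; the only subtlety is a bookkeeping point, namely that the identity $\ol{\map}(0,\phi,x)=\eps\phi+\rho(x)$ makes sense only modulo~$1$ because $\phi$ is a coordinate on $S^1=\R/\Z$, and this is consistent with the target of $\ol{\map}$ being $S^1$. The sign $\eps\in\{\pm 1\}$ is exactly the one appearing in the Main Estimate Theorem, determined by the orientation of $M$.
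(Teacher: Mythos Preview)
Your proof is correct and follows essentially the same approach as the paper: integrate the bound $\left|\frac{\partial\map}{\partial\phi}-\eps\right|\le C_\theta r^{1-\theta}$ from the Main Estimate Theorem over $\phi$, then pass to the limit $r\to 0$ using the Continuous Extension Lemma, and finally set $\rho(x)=\ol{\map}(0,0,x)$. If anything your version is slightly cleaner: the paper first derives only the two-sided Lipschitz inequality $(1-c)|\phi-\phi'|\le|\map(r,\phi,x)-\map(r,\phi',x)|\le(1+c)|\phi-\phi'|$, concludes that the limit is an isometry of $S^1$, and then has to argue separately that the sign is $\eps$; you obtain the signed identity $\ol{\map}(0,\phi_2,x)-\ol{\map}(0,\phi_1,x)=\eps(\phi_2-\phi_1)$ directly from the integrated estimate.
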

\begin{proof}
By Main Estimate Theorem~\ref{prop:nextbound} we have that for any $c>0$ there exists $r_c>0$ such that if $r\in(0,r_c)$, then for $\phi,\phi'\in S^1$
and $x\in M$:
\begin{equation}\label{eq:lipschitz}
(1-c)|\phi-\phi'|\le |\map(r,\phi,x)-\map(r,\phi',x)|\le (1+c)|\phi-\phi'|.
\end{equation}
As $\map(r,\phi,x)$ converges uniformly to $\map(0,\phi,x)$, we infer that \eqref{eq:lipschitz} holds for $r=0$ and arbitrary $c>0$. This means that
actually
\[|\ol{\map}(0,\phi,x)-\ol{\map}(0,\phi',x)|=|\phi-\phi'|.\]
This is possible only if $\ol{\map}(0,\phi,x)=\ol{\map}(0,0,x)\pm \phi\bmod 1$, where the sign is equal to $\epsilon$.
We set $\rho(x)=\ol{\map}(0,0,x)$.
\end{proof}
Recall from the Fibration Lemma~\ref{cor:locallytrivial} that
$$\Theta_t\colon (0,\varepsilon_0]\times M\to \map^{-1}(t)$$
is a diffeomorphism.
\begin{theorem}\label{thm:graph}
For any $t\in S^1$, the maps $\Theta_t\colon (0,\varepsilon_0]\times M\to \map^{-1}(t)$ extend to a continuous map
$\ol{\Theta}_t\colon [0,\varepsilon_0]\times M\to \ol{\map}^{-1}(t)\subset \ol{X}$.
The map $\ol{\Theta}_t$ is injective.
\end{theorem}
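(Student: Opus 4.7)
The plan has three steps: construct $\ol{\Theta}_t(0,\cdot)$ using the explicit formula for $\ol{\map}|_{r=0}$ from Proposition~\ref{prop:onmap}; verify continuity across $\{r=0\}$ via a compactness/uniqueness argument using the Continuous Extension Lemma~\ref{prop:extend1}; and observe that injectivity is forced by the product structure of $\ol{X}$.

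For the extension, Proposition~\ref{prop:onmap} gives $\ol{\map}(0,\phi,x)=\epsilon\phi+\rho(x)\bmod 1$, a bijection $S^1\to S^1$ for every fixed $x\in M$, with $\rho\colon M\to S^1$ continuous. Hence for each $(t,x)$ there is a unique $\phi_0(x):=\epsilon(t-\rho(x))\in S^1$ satisfying $\ol{\map}(0,\phi_0(x),x)=t$, and this depends continuously on $x$. I would set $\ol{\Theta}_t(0,x):=(0,\phi_0(x),x)\in\ol{X}$, which extends the smooth map on $(0,\varepsilon_0]\times M$ produced by the Fibration Lemma~\ref{cor:locallytrivial}. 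Since $\Theta_t$ is already smooth on the open part, only continuity at points $(0,x_0)$ remains to be checked.

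To verify continuity at $(0,x_0)$, take a sequence $(r_n,x_n)\to(0,x_0)$ in $[0,\varepsilon_0]\times M$ and let $\phi_n\in S^1$ be the middle coordinate of $\ol{\Theta}_t(r_n,x_n)$; by construction $\ol{\map}(r_n,\phi_n,x_n)=t$ for every $n$. Compactness of $S^1$ yields a convergent subsequence $\phi_{n_k}\to\phi_\infty$, and continuity of $\ol{\map}$ on $\ol{X}$ (Continuous Extension Lemma~\ref{prop:extend1}) then forces $\ol{\map}(0,\phi_\infty,x_0)=t$; uniqueness from Proposition~\ref{prop:onmap} gives $\phi_\infty=\phi_0(x_0)$. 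Since every subsequential limit agrees, $\phi_n\to\phi_0(x_0)$ and hence $\ol{\Theta}_t(r_n,x_n)\to\ol{\Theta}_t(0,x_0)$. Injectivity is immediate: the first and third components of $\ol{\Theta}_t(r,x)=(r,\cdot,x)\in[0,\varepsilon_0]\times S^1\times M$ recover the input, so $\ol{\Theta}_t(r_1,x_1)=\ol{\Theta}_t(r_2,x_2)$ forces $r_1=r_2$ and $x_1=x_2$. The only mild subtlety is the risk of $\phi_n$ winding around $S^1$ without converging, but the subsequence-plus-uniqueness argument neutralizes it cleanly.
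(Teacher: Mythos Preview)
Your proof is correct, and it takes a genuinely different route from the paper's. The paper establishes the extension by returning to the quantitative bound \eqref{eq:boundfortheta}, namely $|\partial\Theta_t/\partial r|\le 2C_\theta r^{-\theta}$, and integrating it to show that $\Theta_t(r,\cdot)$ converges uniformly as $r\to 0$ (mirroring the proof of the Continuous Extension Lemma~\ref{prop:extend1}); injectivity is then obtained by observing that the continuous extension $\ol{\map}_M$ from Remark~\ref{rem:otherfiber2} is a left inverse. You instead define the boundary value directly via Proposition~\ref{prop:onmap}, and deduce continuity from a subsequence-plus-uniqueness argument that uses only the already-proven continuity of $\ol{\map}$ on $\ol{X}$; your injectivity argument (reading off the $r$ and $x$ coordinates) is the same idea as the paper's left-inverse argument, just stated more directly. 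The paper's approach is more quantitative and yields a rate of convergence, while yours is more modular and avoids re-invoking the derivative estimates; both are perfectly sound.
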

\begin{proof}
By \eqref{eq:boundfortheta} $\frac{\partial\Theta_t}{\partial r}$
is bounded by $2C_\theta r^{-\theta}$, so we the same argument as in the proof of Lemma~\ref{prop:extend1}
shows that $\Theta_{t}(r,x)$ converges as $r\to 0$ uniformly with respect to $x$. Therefore $\ol{\Theta}_{t}$ is well defined.

The composition $\map_M\circ\Theta_t\colon(0,\varepsilon_0]\times M\to (0,\varepsilon_0]\times M$ is an identity. 
Hence
$\ol{\map}_M\circ\ol{\Theta_t}\colon [0,\varepsilon_0]\times M\to [0,\varepsilon_0]\times M$ is also an identity. In particular, $\ol{\Theta}_t$
is injective.
\end{proof}

We next prove the surjectivity of $\ol{\Theta}_t$. 
Before we state the proof, we indicate a possible problem in Figure~\ref{fig:possibleproblem}.
\begin{figure}
\begin{tikzpicture}[y=2pt,x=2pt,yscale=-1]
  \path[draw=red!50!black]
    (45,111) -- (45,151);
  \path[draw=red!50!black]
 (45,130) -- (114,130);
  \draw[thick] (40,180) -- (120,180);
  \draw (45,177) -- (45,183);
  \draw (45,185) node [scale=0.7] {$r=0$};
  \draw (114,177) -- (114,183);
  \draw (114,185) node [scale=0.7] {$r=\varepsilon_0$};
  \draw (140,100) -- (140,160);
  \fill[color=red!50!black] (140,130) circle (1);
  \path[draw=blue!50!black, thin]
    (45,107.6395) .. controls (48.5030,107.2254) and (50.9045,108.1432) ..
    (51.8503,110.9804) .. controls (52.7491,113.6767) and (54.9441,122.6835) ..
    (57.4629,123.9430) .. controls (60.5004,125.4617) and (106.7058,126.8829) ..
    (114,126.8829);
  \fill[color=blue!50!black] (140,126.8829) circle (0.8);
  \path[draw=blue!80!black,thin]
    (45,104.2987) .. controls (47.4738,104.2601) and (51.3945,104.0828) ..
    (52.7857,106.1696) .. controls (54.7792,109.1597) and (56.8460,118.5701) ..
    (60.1356,119.6667) .. controls (72.8104,123.8916) and (100.3337,123.0075) ..
    (114,123.0075);
  \fill[color=blue!80!black] (140,123.0075) circle (0.8);
  \path[draw=blue,thin]
    (45,101.3587) .. controls (48.1269,102.2204) and (54.0950,101.7325) ..
    (56.5275,104.1650) .. controls (60.1598,107.7973) and (60.0858,114.8392) ..
    (65.7483,116.7267) .. controls (78.8876,121.1065) and (100.4386,117.0324) ..
    (114,118.5976);
  \fill[color=blue] (140,118.5976) circle (0.8);
  \path[draw=green!30!black,thin]
    (45,153.1422) .. controls (49.9618,151.0136) and (50.2877,145.7771) ..
    (51.5162,143.3200) .. controls (52.7447,140.8629) and (51.9252,137.7911) ..
    (55.0575,135.7028) .. controls (59.3004,132.8743) and (105.6850,135.0347) ..
    (114,135.0347);
  \fill[color=green!30!black] (140,135.0347) circle (0.8);
  \path[draw=green!50!black,thin]
    (45,155.0799) .. controls (48.7078,154.9064) and (51.6413,152.9503) ..
    (53.7212,150.8704) .. controls (57.6967,146.8948) and (56.8197,139.2443) ..
    (60.5365,139.1773) .. controls (64.2534,139.1104) and (97.6092,138.8960) ..
    (114,138.7764);
  \fill[color=green!50!black] (140,138.7764) circle (0.8);
  \path[draw=green!80!black,thin]
    (45,159.6235) .. controls (50.7151,158.1624) and (55.1299,155.0511) ..
    (59.6679,150.6699) .. controls (64.2060,146.2888) and (62.6783,145.1894) ..
    (66.4165,143.5873) .. controls (70.1547,141.9852) and (114.4679,142.3990) ..
    (114,141.9837);
  \fill[color=green!90!black] (140,141.9837) circle (0.8);
  \path[draw=green!50!red,thin]
    (45,164.3675) .. controls (48.3863,164.1617) and (61.4834,152.6880) ..
    (72.4969,150.9372) .. controls (83.5103,149.1864) and (97.7742,146.8978) ..
    (114,146.6609);
  \fill[color=green!50!red] (140,146.6609) circle (0.8);
  \path[draw=blue!50!red,thin]
    (45,98.0178) .. controls (47.8522,97.8568) and (50.7296,98.1441) ..
    (53.5875,98.5524) .. controls (57.2013,99.0686) and (68.5517,108.3600) ..
    (73.0982,110.1786) .. controls (85.7745,115.2491) and (100.3999,114.9895) ..
    (114,114.9895);
  \fill[color=blue!50!red] (140,114.9895) circle (0.8);
\draw[dashed,->] (120,130) -- node[midway, above, scale=0.7] {$\map$} (135,130);
\draw[dashed,->] (80,175) --node[midway, right,scale=0.7] {$\Theta_t$} (80,155);
\end{tikzpicture}
\caption{The picture indicates the necessity of proving the surjectivity of $\ol{\Theta}_t$, the map $\ol{\Theta}_t$
is not onto. In Theorem~\ref{thm:graph2} we show that the situation as on the picture cannot happen.}\label{fig:possibleproblem}
\end{figure}
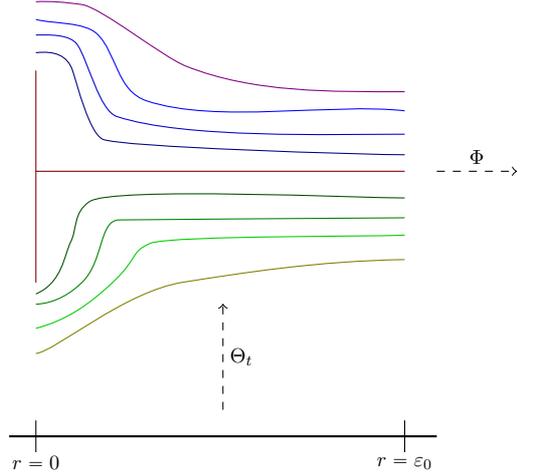
\begin{theorem}\label{thm:graph2}
The map $\ol{\Theta}_t$ is onto $\ol{\map}^{-1}(t)$.
\end{theorem}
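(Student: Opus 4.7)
The plan splits the argument according to whether the $r$-coordinate of a point in $\ol{\map}^{-1}(t)$ is positive or zero. For $r>0$, any such point lies in $\map^{-1}(t)\subset X$, and the Fibration Lemma~\ref{cor:locallytrivial} asserts that $\Theta_t\colon(0,\varepsilon_0]\times M\to\map^{-1}(t)$ is a bijection, so the point equals $\ol{\Theta}_t(r,x)$ for some $(r,x)$. Hence the entire content of the theorem concerns the boundary slice $\{r=0\}$.

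The main input for the boundary case is Proposition~\ref{prop:onmap}, which gives the explicit formula $\ol{\map}(0,\phi,x)=\eps\phi+\rho(x)\bmod 1$. This shows that for each fixed $x\in M$ there is a unique $\phi(x):=\eps(t-\rho(x))\bmod 1$ with $(0,\phi(x),x)\in\ol{\map}^{-1}(t)$; in other words, $\ol{\map}^{-1}(t)\cap\{r=0\}$ is the graph of the continuous function $x\mapsto(\phi(x),x)$. Thus surjectivity reduces to showing that $\ol{\Theta}_t(0,x)=(0,\phi(x),x)$ for every $x\in M$.

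To confirm this, recall from Theorem~\ref{thm:graph} that $\ol{\Theta}_t$ is defined by the limit $\ol{\Theta}_t(0,x)=(0,\phi'(x),x)$ where $\phi'(x):=\lim_{r\to 0^+}\Theta_t(r,x)$, and that the $M$-coordinate is preserved. Using the defining identity $\map(r,\Theta_t(r,x),x)=t$ for $r>0$ together with the continuity of $\ol{\map}$ established in the Continuous Extension Lemma~\ref{prop:extend1}, I pass to the limit $r\to 0^+$ and obtain $\ol{\map}(0,\phi'(x),x)=t$. Substituting the formula from Proposition~\ref{prop:onmap} gives $\eps\phi'(x)+\rho(x)=t\bmod 1$, so by uniqueness $\phi'(x)=\phi(x)$, which is exactly what was needed.

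I do not foresee a real obstacle: the figure is meant to illustrate a scenario in which $\ol{\map}^{-1}(t)$ could contain stray boundary points not attained as limits of interior curves $r\mapsto\Theta_t(r,x)$, but the rigid linear dependence of $\ol{\map}|_{\{r=0\}}$ on $\phi$ from Proposition~\ref{prop:onmap} turns $\ol{\map}^{-1}(t)\cap\{r=0\}$ into a single graph over $M$, leaving no room for such points. All the hard analytic work sits upstream, in the Main Estimate Theorem~\ref{prop:nextbound} (used to extend $\map$ and $\Theta_t$ continuously to $r=0$) and in Proposition~\ref{prop:onmap} (which pins down the boundary values of $\ol{\map}$); the surjectivity statement is then essentially a matching of indices on two graphs.
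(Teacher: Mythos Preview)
Your proof is correct and follows essentially the same route as the paper's: split into $r>0$ (handled by the Fibration Lemma) and $r=0$, then use Proposition~\ref{prop:onmap} to see that $\ol{\map}^{-1}(t)\cap\{r=0\}$ meets each slice $\{0\}\times S^1\times\{x\}$ in a single point, which must therefore equal $\ol{\Theta}_t(0,x)$. Your write-up is slightly more explicit than the paper's in verifying that $\ol{\Theta}_t(0,x)\in\ol{\map}^{-1}(t)$ via continuity of $\ol{\map}$, but the underlying argument is the same.
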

\begin{proof}
By the Fibration Lemma~\ref{cor:locallytrivial} the map $\Theta_t$ is onto $\map^{-1}(t)\subset X$. Hence it is enough
to show that $\ol{\Theta_t}|_{\{0\}\times M}$ is onto $\ol{\map}^{-1}(t)\cap(\ol{X}\setminus X)$.

Observe that by Proposition~\ref{prop:onmap} the intersection
$\ol{\map}^{-1}(t)\cap\{0\}\times S^1\times\{x\}$ consist of one point for any $x\in M$ and $t$.


On the other hand, since $\ol{\map}^{-1}(t)\cap(\{0\}\times S^1\times\{x\})$ is a single point, this point has to be equal to $\Theta_t(0,x)$. Therefore,
$\Theta_t(0,x)$ is  onto $\ol{\map}^{-1}(t)\cap\{r=0\}$ so $\Theta_t(r,x)$ is onto $\ol{\map}^{-1}(t)$.
\end{proof}

As a corollary we will show the following result.
\begin{theorem}[Continuous Fibration Theorem]\label{thm:islocallytrivial}
The map $\ol{\map}\colon \ol{X}\to S^1$ is a continuous, locally trivial fibration.
\end{theorem}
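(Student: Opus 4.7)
The plan is to promote the local trivialization $\wt{\Theta}$ constructed in the proof of the Fibration Lemma~\ref{cor:locallytrivial} to a local trivialization of $\ol{\map}$, making essential use of the extensions provided by Theorems~\ref{thm:graph} and~\ref{thm:graph2}.

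Fix $t_0\in S^1$ and an open neighborhood $U\subset S^1$ of $t_0$. I would define
\[\ol{\wt{\Theta}}\colon [0,\varepsilon_0]\times U\times M\to \ol{\map}^{-1}(U),\qquad \ol{\wt{\Theta}}(r,t,x)=(r,\ol{\Theta}_t(r,x),x),\]
which is well defined by Theorem~\ref{thm:graph}. By Theorem~\ref{thm:graph2} each slice $\ol{\Theta}_t\colon [0,\varepsilon_0]\times M\to \ol{\map}^{-1}(t)$ is a bijection, so $\ol{\wt{\Theta}}$ is a bijection onto $\ol{\map}^{-1}(U)$, and by construction it intertwines the projection to the middle factor with $\ol{\map}$.

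The inverse admits a closed form $(r,\phi,x)\mapsto (r,\ol{\map}(r,\phi,x),x)$ which is continuous by the Continuous Extension Lemma~\ref{prop:extend1}. Thus the only remaining task is to verify continuity of $\ol{\wt{\Theta}}$ itself, which reduces to joint continuity of $(r,t,x)\mapsto \ol{\Theta}_t(r,x)$ on $[0,\varepsilon_0]\times U\times M$. On the open part $\{r>0\}$ this follows from the implicit function theorem applied to the smooth relation $\map(r,\ol{\Theta}_t(r,x),x)=t$, combined with the nonvanishing of $\partial\map/\partial\phi$ from \eqref{eq:dphi}.

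The main obstacle will be continuity at points of the boundary $\{r=0\}$, since at such points $\ol{\Theta}_t$ was only obtained as a uniform limit. Here I would exploit the estimate $\left|\frac{\partial\Theta_t}{\partial r}\right|\le 2C_\theta r^{-\theta}$ recorded in \eqref{eq:boundfortheta}, whose right-hand side depends neither on $t$ nor on $x$. Integrating in $r$ yields a uniform bound $|\Theta_t(r,x)-\ol{\Theta}_t(0,x)|\le \frac{C_\theta}{1-\theta}r^{1-\theta}$ valid for all $t\in U$ and $x\in M$ simultaneously. Coupled with continuity in $(t,x)$ of the boundary map $\ol{\Theta}_t(0,x)=\eps(t-\rho(x))\bmod 1$ supplied by Proposition~\ref{prop:onmap}, a straightforward triangle-inequality argument delivers joint continuity at every boundary point. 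This finishes the verification that $\ol{\wt{\Theta}}$ is a homeomorphism, and hence establishes the desired local triviality of $\ol{\map}$ with fiber $[0,\varepsilon_0]\times M$.
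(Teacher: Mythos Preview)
Your proposal is correct and follows essentially the same strategy as the paper: both construct the trivialization $(t,r,x)\mapsto(r,\ol{\Theta}_t(r,x),x)$, invoke Theorems~\ref{thm:graph} and~\ref{thm:graph2} for bijectivity, and observe that the inverse $(r,\phi,x)\mapsto(r,\ol{\map}(r,\phi,x),x)$ is continuous by the Continuous Extension Lemma~\ref{prop:extend1}.

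The one genuine difference is in how the homeomorphism property is established. The paper takes the base to be a \emph{closed} interval $I\subset S^1$, so that both $\ol{\map}^{-1}(I)$ and $I\times[0,\varepsilon_0]\times M$ are compact; then the continuity of the inverse alone suffices, since a continuous bijection between compact Hausdorff spaces is automatically a homeomorphism. This sidesteps any direct verification that $\ol{\Theta}_t(r,x)$ is jointly continuous. You instead work over an open $U$ and compensate by explicitly checking joint continuity at the boundary $\{r=0\}$, using the uniform-in-$(t,x)$ bound $|\Theta_t(r,x)-\ol{\Theta}_t(0,x)|\le \frac{2C_\theta}{1-\theta}r^{1-\theta}$ from \eqref{eq:boundfortheta} together with the explicit formula $\ol{\Theta}_t(0,x)=\eps(t-\rho(x))$ from Proposition~\ref{prop:onmap}. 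Both routes are valid; the paper's compactness trick is shorter, while your argument makes the mechanism of continuity at $r=0$ more transparent.
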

\begin{proof}
We show that for any closed interval $I\subset S^1$, the preimage $\ol{X}_I:=\ol{\map}^{-1}(I)$ is homeomorphic to the product
$Y_I\colon = I\times [0,\varepsilon_0]\times M$ by a homeomorphism that preserves the fibers. Consider the map $\ol{\Theta}_I\colon Y_I\to X_I$
given by $\ol{\Theta}_I(t,x)=\ol{\Theta}_t(x)$ for $x\in[0,\varepsilon_0]\times M$. By Theorems~\ref{thm:graph} and \ref{thm:graph2}, this map is a bijection.
Moreover, its inverse is $\ol{\map}$, which is continuous by the Continuous Extension Lemma~\ref{prop:extend1}. A continuous bijection between compact sets
is a homeomorphism.  It is clear that $\ol{\Theta}_I$ preserves the fibers.
\end{proof}

\section{Constructing Seifert hypersurfaces based on $\map$}\label{sec:seifert}

\begin{theorem}
Let $t\in S^1$ be a non-critical value of the map $\map$ and $t\neq 0$. Then the closure of $\map^{-1}(t)$
 is a Seifert hypersurface for $M$ which is smooth up to boundary. Moreover, the $(n+1)$-dimensional volume
of $\map^{-1}(t)$ is finite.
\end{theorem}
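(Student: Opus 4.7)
The plan is to combine the analyses of Sections~\ref{sec:behavior} and~\ref{sec:behavior2}. Writing $\Sigma_t := \ol{\map^{-1}(t)}$ for the closure in $\R^{n+2}$, I would verify in order: smoothness of the interior, boundedness, identification $\Sigma_t\setminus\map^{-1}(t)=M$, the topological manifold-with-boundary structure smooth up to boundary, and finiteness of volume. First, by Proposition~\ref{prop:smooth} and the non-critical hypothesis, $\map^{-1}(t)$ is a smooth oriented embedded $(n+1)$-dimensional hypersurface in $\R^{n+2}\setminus M$ (orientation pulled back from $S^1$). Theorem~\ref{thm:mapisproper} gives $\map^{-1}(t)\subset B(0,R_t)$, so $\Sigma_t$ is bounded and $\Sigma_t\setminus\map^{-1}(t)\subseteq M$ (since $\map^{-1}(t)$ is closed in $\R^{n+2}\setminus M$).

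Next, to identify the remaining points of $\Sigma_t$ and to recover the manifold structure near $M$, I would pass to the blow-up parametrization. Let $\ol{\Pi}\colon\ol{X}\to N$ be the continuous extension of $\Pi$ defined by $\ol{\Pi}(0,\phi,x)=x$, and invoke the Continuous Fibration Theorem~\ref{thm:islocallytrivial}: the level $\ol{\map}^{-1}(t)\cap\ol{X}$ is the image of the continuous bijection $\ol{\Theta}_t\colon[0,\varepsilon_0]\times M\to\ol{X}$ of Theorems~\ref{thm:graph} and~\ref{thm:graph2}. Composing gives
\[F:=\ol{\Pi}\circ\ol{\Theta}_t\colon[0,\varepsilon_0]\times M\to\R^{n+2},\]
whose restriction to the interior $(0,\varepsilon_0]\times M$ is a smooth diffeomorphism onto $\map^{-1}(t)\cap(N\setminus M)$, and whose restriction to $\{0\}\times M$ is the inclusion $M\hookrightarrow\R^{n+2}$. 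Since $[0,\varepsilon_0]\times M$ is compact and $F$ is continuous and injective (injectivity on the interior is Theorem~\ref{thm:graph}; the boundary piece maps into $M$, which is disjoint from the interior image), $F$ is a homeomorphism onto its image. This simultaneously shows $\Sigma_t\cap N$ is a topological manifold with boundary $M$, that $M\subseteq\Sigma_t$, and that $\partial\Sigma_t=M$, establishing the Seifert property.

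For smoothness up to boundary I would write $F$ explicitly as
\[F(r,x)=x+r\bigl(v_1(x)\cos 2\pi\phi_t(r,x)+v_2(x)\sin 2\pi\phi_t(r,x)\bigr),\]
where $\phi_t$ solves $\map(r,\phi_t(r,x),x)=t$. The implicit function theorem combined with the Main Estimate Theorem~\ref{prop:nextbound} and the bound $|\partial\map/\partial\phi|\geq 1/2$ yields $|\partial_r\phi_t|\leq Cr^{-\theta}$ with $|\partial_{w_j}\phi_t|$ bounded. Every term in $\partial_r F$ or $\partial_{w_j}F$ that involves $\partial_r\phi_t$ carries a compensating factor of $r$, producing contributions of size $O(r^{1-\theta})\to 0$; hence these derivatives extend continuously to $r=0$ with nonsingular boundary Jacobian, making $F$ a $C^1$ embedding up to the boundary. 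Iterating with higher-order implicit differentiation and higher-order analogues of the Approximation Theorem~\ref{thm:maintechnical} would give the full smoothness claim.

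Finite volume will follow by splitting $\map^{-1}(t)$ into its part inside the tubular neighborhood $\Pi(X)$, of bounded $(n+1)$-volume by the Bounded Volume Lemma~\ref{prop:boundedarea}, and its complement, which is a closed subset of the compact set $\ol{B(0,R_t)}\setminus\Pi(X)$ on which $\map$ is smooth and $t$ is a regular value; the regular value theorem then gives a compact smooth $(n+1)$-submanifold with boundary $\map^{-1}(t)\cap\partial\Pi(X)$, of finite volume. The main obstacle will be the higher-order smoothness of $F$ up to $\{r=0\}$: the derivatives of $\phi_t$ diverge like negative powers of $r$, so I must track carefully how the explicit factor $r$ in $F$ absorbs these divergences after repeated differentiation. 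This bookkeeping, organized through successive differentiation of $\map(r,\phi_t,x)=t$ and refined higher-order estimates of the type in Lemma~\ref{lem:mainbound}, will be the technical heart of the argument.
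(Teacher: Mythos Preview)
Your proposal follows essentially the same route as the paper's proof: smoothness in the interior via the regular value theorem, boundedness via Theorem~\ref{thm:mapisproper}, passage to the blow-up $\ol{X}$ and use of Theorems~\ref{thm:graph} and~\ref{thm:graph2} to identify the closure near $M$ with $[0,\varepsilon_0]\times M$, and the volume split into the tubular piece (Bounded Volume Lemma~\ref{prop:boundedarea}) plus a compact complement. The paper's own proof is in fact terser than yours on the boundary regularity and stops at the topological homeomorphism $\ol{\Theta}_t$.

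Two points deserve correction. First, your claim that $|\partial_{w_j}\phi_t|$ is bounded is wrong: by the Main Estimate Theorem~\ref{prop:nextbound} one has $|\partial\map/\partial w_j|\le C_wC_\theta r^{-\theta}$, so the implicit function theorem gives $|\partial_{w_j}\phi_t|\le 2C_wC_\theta r^{-\theta}$, diverging exactly like $\partial_r\phi_t$. Your $C^1$ argument still survives, because in $\partial_{w_j}F$ every occurrence of $\partial_{w_j}\phi_t$ also carries the explicit factor~$r$ from $F$, giving $O(r^{1-\theta})$; you just need to say this for the $w_j$-derivatives as well.

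Second, the proposed iteration to full $C^\infty$ regularity is not supported by the tools in the paper and is unlikely to work as stated. Already at second order one finds in $\partial_r^2 F$ a term $2(-2\pi v_1\sin2\pi\phi_t+2\pi v_2\cos2\pi\phi_t)\,\partial_r\phi_t$ with no compensating factor of $r$; this is $O(r^{-\theta})$ and diverges. Higher-order analogues of the Approximation Theorem~\ref{thm:maintechnical} are not proved in the paper, and even if they were, the single factor of $r$ in $F$ cannot absorb arbitrarily many derivatives of $\phi_t$. The paper itself does not attempt this: its proof establishes only the topological manifold-with-boundary structure via the homeomorphism $\ol{\Theta}_t$, and the phrase ``smooth up to boundary'' in the statement should be read in that light (smooth interior, topological boundary $M$), not as $C^\infty$ regularity of $F$ at $r=0$.
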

\begin{proof}
Let $\Sigma=\map^{-1}(t)$.
By the implicit function theorem $\Sigma$ is a smooth open submanifold of $\R^{n+2}\setminus M$.
By Theorem~\ref{thm:mapisproper} we infer that $\Sigma$ is contained in some ball $B(0,R)$ for large $R$. This implies that $\Sigma\setminus N$ is compact.

The main problem is to show that boundary of the closure of $\Sigma$ is $M$. To this end we study the intersection $\Sigma_0:=\Sigma\cap (N\setminus M)$. Notice
that we have a diffeomorphism $\Sigma_0\cong \map^{-1}(t)\cap X$ via
the map $X\stackrel{\simeq}{\to} (N\setminus M)$.

Now $\Sigma_0$ is a smooth surface diffeomorphic to $(0,\varepsilon_0]\times M$. By Theorem~\ref{thm:graph}
the closure $\ol{\Sigma}_0$ of $\Sigma_0$ in $\ol{X}$ is homeomorphic to the product $[0,\varepsilon_0]\times M$. Under the map $\ol X\to N$
the closure $\ol{\Sigma}_0$ is mapped to the closure of $\Sigma$ in $N$.
It follows that the boundary of the closure of $\Sigma\cap N$ is $M$ itself.

To show the finiteness of the volume of $\Sigma$, notice that the area of $\Sigma\setminus N$ is finite, because $\Sigma\setminus N$ is smooth and
compact. The finiteness of the volume of $\Sigma\cap N$ follows from the Bounded Volume Lemma~\ref{prop:boundedarea}.
\end{proof}

In numerical applications calculating the map $\map$ in $N$ can be challenging due to the lack of a good bound for derivatives of $\map$ in $N$.
Therefore the following corollary should be useful.

\begin{proposition}
Choose $t\in S^1$, $t\neq 0$ to be a non-critical value of $\map$. Define $M'=\map^{-1}(t)\cap \partial N$. Let $\Sigma'=\map^{-1}(t)\setminus N$.
Then $M'$ is diffeomorphic to $M$, isotopic to $M$ as knots in $S^{n+2}$ and $\Sigma'$ is a smooth surface for $M'$.
\end{proposition}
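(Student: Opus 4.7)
The plan is to transfer everything to the parametrization $\Pi\colon \ol{X}\to N$ (extended to $r=0$ via $(0,\phi,x)\mapsto x$) and exploit the fibration structure established by the Fibration Lemma~\ref{cor:locallytrivial} and the Continuous Fibration Theorem~\ref{thm:islocallytrivial}. The diffeomorphism $\Theta_t\colon(0,\varepsilon_0]\times M\to\map^{-1}(t)\cap X$ preserves the $r$-coordinate, and in the $(r,\phi,x)$-chart on $X$ the boundary $\partial N$ corresponds to the slice $\{r=\varepsilon_0\}$.

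For the diffeomorphism $M\cong M'$, restrict $\Theta_t$ to $\{\varepsilon_0\}\times M$: the composition $\Pi\circ\Theta_t|_{\{\varepsilon_0\}\times M}$ is a diffeomorphism onto $M'=\map^{-1}(t)\cap\partial N$. For $\Sigma'$ being a smooth surface with boundary $M'$, boundedness and compactness come from Theorem~\ref{thm:mapisproper}, smoothness in the interior (away from $\partial N$) comes from the non-criticality of $t$, and at the boundary one uses that $X$ is a smooth manifold with boundary $\{r=\varepsilon_0\}$ and $\map|_X$ is a submersion everywhere (since $|\partial\map/\partial\phi|>1/2$ on all of $X$, by \eqref{eq:dphi}). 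Regular-value theory then makes $\map^{-1}(t)\cap X$ a smooth $(n+1)$-manifold with boundary $\map^{-1}(t)\cap\partial N$, and transport through $\Pi$ gives the required smooth structure on $\Sigma'$ up to its boundary $M'$.

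For the isotopy it is simplest to bypass the fibration and use the tubular neighborhood directly. From the parametrization $M'=\Pi\circ\Theta_t(\{\varepsilon_0\}\times M)$ and the identity $\pi\circ\Pi(r,\phi,x)=x$ (where $\pi\colon N\to M$ denotes the tubular neighborhood projection), it follows that $\pi|_{M'}\colon M'\to M$ is a diffeomorphism; hence $M'$ is the image of a smooth section $s\colon M\to\partial N$. Linearly interpolating in the normal bundle from the zero section to $s$ produces a smooth isotopy inside $N\subset S^{n+2}$ from $M$ to $M'$, so the two submanifolds are isotopic as knots.

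The main step to verify is the transversality underlying the smoothness-up-to-boundary claim for $\Sigma'$; this reduces cleanly to $\map|_X$ being a submersion, which is given by \eqref{eq:dphi}. Everything else follows directly from the fibration results and the manifold-with-boundary structure of $X$, so the proof is essentially a matter of assembling the earlier material rather than introducing new estimates.
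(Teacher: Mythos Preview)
Your argument is correct and largely parallels the paper's. The diffeomorphism $M\cong M'$ via $\Theta_t|_{\{\varepsilon_0\}\times M}$ and the compactness of $\Sigma'$ via Theorem~\ref{thm:mapisproper} are exactly what the paper does. You also supply a transversality justification (via $|\partial\map/\partial\phi|>\tfrac12$ on $\partial N$) for $\Sigma'$ being smooth up to its boundary $M'$, a point the paper leaves implicit.

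The one genuine difference is in the isotopy. The paper runs the isotopy \emph{along the level set}: it sets $M_r=\pi\circ\ol{\Theta}_t(\{r\}\times M)$ for $r\in[0,\varepsilon_0]$, using the continuous extension $\ol{\Theta}_t$ of Theorem~\ref{thm:graph} so that $M_0=M$ and $M_{\varepsilon_0}=M'$. Your route instead observes that $\pi|_{M'}$ is a diffeomorphism onto $M$, recognizes $M'$ as the image of a section of the normal bundle, and linearly interpolates in the fibers. This is more elementary---it uses only the tubular-neighborhood structure and avoids invoking the continuous-extension machinery of Section~\ref{sec:behavior2}---while the paper's version has the conceptual advantage that the isotopy stays inside $\ol{\map^{-1}(t)}$ throughout. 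Both are valid.
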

\begin{proof}
The fact that $M'$ is diffeomorphic to $M$ follows from the Fibration Lemma~\ref{cor:locallytrivial}. The isotopy is given by
$M_r=\pi\circ \ol{\Phi}_t(\{r\}\times M)$, where $\ol{\Phi}$ is as in Theorem~\ref{thm:graph} and $\pi\colon X\to N$ is the projection.
By definition $\partial\Sigma'=M'$
and as $\Sigma'$ is closed and bounded it is also compact.
\end{proof}

\end{document}